\documentclass[a4paper,reqno,11pt]{amsart}

\usepackage{amsmath,amsfonts,amssymb}
\usepackage{graphics,color}
\usepackage[latin1]{inputenc}
\usepackage[colorlinks = true,
            linkcolor = blue,
            urlcolor  = blue,
            citecolor = blue,
            anchorcolor = blue]{hyperref}
\usepackage{mathrsfs}
\newtheorem{theorem}{Theorem}[section]
\newtheorem{lemma}[theorem]{Lemma}

\makeatletter
\@namedef{subjclassname@2020}{\textup{2020} Mathematics Subject Classification}
\makeatother

\numberwithin{equation}{section}
\newtheorem{proposition}[theorem]{Proposition}

\newtheorem{remark}[theorem]{Remark}
\newcommand{\R}{\mathbb{R}}
\newcommand{\Mcal}{\mathcal{M}}
\newcommand{\Ncal}{\mathcal{N}}
\newcommand{\Fcal}{\mathcal{F}}
\newcommand{\Pcal}{\mathcal{P}}
\newcommand{\Zcal}{\mathcal{Z}}

\newcommand{\Qcal}{\mathcal{Q}}
\newcommand{\dd}{\mathrm{d}}
\newcommand{\eps}{\varepsilon}
\newcommand{\pa}{\partial}
\usepackage{cite}
\numberwithin{equation}{section}

\allowdisplaybreaks[4]
\begin{document}
	
\title[Attractive Bose--Einstein condensates in displaced harmonic traps]
{A separation threshold for ground states of two-component attractive Bose--Einstein condensates in displaced harmonic traps}

\author{Shubin Yu}
\address{Chongqing Key Laboratory of Group \& Graph Theories and Applications, Chongqing University of
Arts and Sciences, Chongqing 402160, People's Republic of China.}
\email{yshubin168@email.swu.edu.cn}

\author{Chun-Lei Tang$^*$}
\address{School of Mathematics and Statistics, Southwest University, Chongqing 400715, People's Republic of China.}
\email{tangcl@swu.edu.cn}

\footnotetext[1]{Corresponding author.}
\footnotetext[2]{Project supported by the National Natural Science Foundation of China (No.12371120).}
\subjclass[2020]{35J50, 35J47, 46N50.}

\keywords{Bose--Einstein condensates; Ground states; Constraint minimizers; Harmonic traps.}

\begin{abstract}
In this paper, we study the existence of ground states for two-component attractive Bose--Einstein condensates with displaced harmonic trapping
potentials
\[
V_1(x)=|x-x_1|^2
\ \text{and}\
V_2(x)=|x-x_2|^2,
\]
where $x_1\neq x_2\in\mathbb R^2$.
For intraspecies interactions
 $a_1,a_2\in (0,a^*)$, we focus on the critical interspecies coupling
\[
\beta=\beta^*:=a^*+\sqrt{(a^*-a_1)(a^*-a_2)}
\]
and prove that there exists a separation threshold $d_c\in(0,\infty)$ such that the corresponding constrained minimization problem
admits no minimizer when $0<|x_1-x_2|<d_c$, whereas it admits a minimizer when $|x_1-x_2|\geq d_c$.
The threshold is given by $d_c=\Lambda_*^{-1/4}$, where $\Lambda_*$
is characterized by an auxiliary variational problem and attained.
Our results extend the previous work of Guo et al. [J. Funct. Anal. 276 (2019)] and complete the existence classification of
ground states for two-component attractive Bose--Einstein condensates with harmonic trapping potentials.
\end{abstract}
\maketitle
\vspace {-1cm}

\section{Introduction and main results}

In this paper, we study the following coupled nonlinear Gross--Pitaevskii equations
\begin{equation}\label{eq:intro-system}
\left\{
\begin{aligned}
-\Delta u_1+V_1(x)u_1
&=\mu u_1+a_1u_1^3+\beta u_2^2u_1
\ \text{in }\R^2,
\\
-\Delta u_2+V_2(x)u_2
&=\mu u_2+a_2u_2^3+\beta u_1^2u_2
\ \text{in }\R^2,
\end{aligned}
\right.
\end{equation}
where $(u_1,u_2)\in\mathcal H_1(\R^2)\times\mathcal H_2(\R^2)$ and
\[
\mathcal H_i(\R^2)
:=
\left\{
u\in H^1(\R^2):
\int_{\R^2}V_i(x)|u(x)|^2\dd x<\infty
\right\},\ i=1,2.
\]
The system \eqref{eq:intro-system} arises in the study of two-component
Bose--Einstein condensates with trapping potentials $V_1(x)$ and $V_2(x)$,
see for example
\cite{EsryGreeneBurkeBohn1997,HallMatthewsEnsherWiemanCornell1998,
ParkinsWalls1998,LinWei2006,RoyoLetelier2014}
and the references therein.  Here $\mu\in\R$ is the chemical potential. The parameter $a_i>0$ ($resp. <0$) represents that the intraspecies interaction of the atoms inside
each component is attractive ($resp.$ repulsive), while $\beta>0$  ($resp. <0$) describes
the
interspecies interaction between two components is attractive ($resp.$ repulsive).
We focus on the case where both intraspecies and interspecies interactions are attractive, that is, $a_i>0\ \text{and }  \beta>0,\ i=1,2$.

As shown in \cite[Proposition~A.1]{GuoLiWeiZeng2019}, ground states of
\eqref{eq:intro-system} with $a_i>0\ \text{and }\beta>0$ can be described
equivalently by nonnegative minimizers of the $L^2$-critical constrained
variational problem
\begin{equation}
e(a_1,a_2,\beta)
:=
\inf_{(u_1,u_2)\in\Mcal}
E_{a_1,a_2,\beta}(u_1,u_2),
\ a_1>0,\ a_2>0,\ \beta>0,
\label{eq:intro-minimization}
\end{equation}
where
\[
\Mcal
:=
\left\{
(u_1,u_2)\in\mathcal H_1(\R^2)\times\mathcal H_2(\R^2):
\int_{\R^2}\bigl(u_1^2+u_2^2\bigr)\dd x=1
\right\},
\]
and the Gross--Pitaevskii energy functional is given by
\begin{align}
E_{a_1,a_2,\beta}(u_1,u_2)
&:=
\int_{\R^2}
\left(
|\nabla u_1|^2+|\nabla u_2|^2
+V_1(x)u_1^2+V_2(x)u_2^2
\right)\dd x
\nonumber\\
&\ \ \ -\int_{\R^2}
\left(
\frac{a_1}{2}u_1^4
+\frac{a_2}{2}u_2^4
+\beta u_1^2u_2^2
\right)\dd x .
\label{eq:intro-general-energy}
\end{align}
The trapping potentials
$0\leq V_i(x)\in C_{loc}^{\alpha}(\mathbb R^2)$ ($\alpha\in(0,1)$) are usually assumed to satisfy
\begin{equation}
\lim_{|x|\to\infty}V_i(x)=+\infty,\ \text{both}\
\inf_{x\in\R^2}V_i(x)=0\ \text{and}\ \inf_{x\in\R^2}(V_1(x)+V_2(x))\ \text{are attained}.
\label{eq:intro-potential-assumption}
\end{equation}
Under the assumption \eqref{eq:intro-potential-assumption}, Guo, Li, Wei and Zeng
\cite{GuoLiWeiZeng2019}
obtained a sharp existence and nonexistence result away from the
critical interspecies coupling. To state their result, let $Q$ denote the unique
positive radial solution of
\begin{equation}\label{eqn:Q-equation}
-\Delta Q+Q=Q^3\ \text{in }\R^2,
\end{equation}
see \cite{Kwong1989}.
Set $a^*:=\|Q\|_2^2:=\int_{\mathbb R^2}|Q|^2\dd x$.
For $0<a_1,a_2<a^*$, define the critical coupling
\begin{equation}
\beta^*
:=
a^*+\sqrt{(a^*-a_1)(a^*-a_2)}.
\label{eq:beta-star}
\end{equation}
Then problem \eqref{eq:intro-minimization} admits at least one minimizer
 when
$0<\beta<\beta^*$, whereas it has no minimizer if either $a_1>a^*$,
$a_2>a^*$, or $\beta>\beta^*$.
We mention that this result was also obtained in the earlier work \cite{BaoCai2011}, where the authors
 established general existence and uniqueness results for
ground states of two-component Gross--Pitaevskii systems, including models
with an internal atomic Josephson junction.

The endpoint cases are more delicate.
  Guo, Li, Wei and Zeng
 proved in \cite[Theorem~1.2]{GuoLiWeiZeng2019} that
 if $a_1=a^*$ or $a_2=a^*$, there is no minimizer for problem \eqref{eq:intro-minimization} with $\beta\leq\beta^*$.
 At the critical coupling $\beta=\beta^*$, they further proved that if
$0<a_1,a_2<a^*$ and $V_1(x),V_2(x)$ have at least one common minimum point, then no minimizer
exists.  If $0<a_1,a_2<a^*$ and $V_1(x),V_2(x)$ have no
common minimum point, they proved existence of minimizers under the strict inequality
\begin{equation}
e(a_1,a_2,\beta^*)
<
\frac{1}{
\sqrt{a^*-a_1}+\sqrt{a^*-a_2}}
\inf_{x\in\R^2}
\left[
\sqrt{a^*-a_2}\,V_1(x)
+
\sqrt{a^*-a_1}\,V_2(x)
\right].
\label{eq:intro-GLWZ-condition}
\end{equation}
Therefore, in the
critical coupling case, the relative geometry of the two trapping
potentials
 explicitly affects the existence of minimizers.
However, for a given pair of trapping potentials without a common minimum point,
\eqref{eq:intro-GLWZ-condition}
provides only a sufficient condition for existence.
This naturally raises the question of whether the existence of
minimizers can be characterized completely at the critical coupling when $0<a_1,a_2<a^*$ and the two trapping potentials have no
common
minimum point?

The purpose of the present paper is to answer this question for two harmonic
trapping potentials with distinct centers, that is,
\begin{equation}
V_1(x)=|x-x_1|^2,\ V_2(x)=|x-x_2|^2.
\label{eq:intro-general-harmonic}
\end{equation}
Note that the gradient and nonlinear terms in \eqref{eq:intro-general-energy}
are invariant under a common translation and
rotation.  After translating $x_1$ to the origin and rotating the vector
$x_2-x_1$ onto the first coordinate axis, it suffices to consider
\begin{equation}
V_{1,d}(x):=|x|^2,\
V_{2,d}(x):=|x-de_1|^2,
\label{eq:harmonic-traps}
\end{equation}
where $e_1:=(1,0)\ \text{and}\ d:=|x_1-x_2|>0.$
For these harmonic potentials, it is convenient to work in
\[
\Sigma(\R^2)
:=
\left\{
u\in H^1(\R^2):
\int_{\R^2}|x|^2|u(x)|^2\dd x<\infty
\right\}
\]
and $\mathcal H_i(\R^2)=\Sigma(\R^2)$, $i=1,2$.
Moreover, the constrained
variational problem \eqref{eq:intro-minimization} can be rewritten as
\[
e_*(d)
:=
\inf_{(u_1,u_2)\in\Mcal}
E_{a_1,a_2,\beta^*}^{d}(u_1,u_2),
\]
where
\begin{align*}
E_{a_1,a_2,\beta^*}^{d}(u_1,u_2)&:=\int_{\R^2}
\left(
|\nabla u_1|^2+|\nabla u_2|^2
+V_{1,d}(x)u_1^2+V_{2,d}(x)u_2^2
\right)\dd x\\
&\ \ \ -\int_{\R^2}
\left(
\frac{a_1}{2}u_1^4
+\frac{a_2}{2}u_2^4
+\beta^* u_1^2u_2^2
\right)\dd x .
\end{align*}
Fix $0<a_1,a_2<a^*$ and define
\[
\gamma
:=
\frac{\sqrt{a^*-a_2}}
{\sqrt{a^*-a_1}+\sqrt{a^*-a_2}}
\in(0,1).
\]
From \eqref{eq:intro-GLWZ-condition}, we define
\begin{align*}
W_{\gamma,d}(x)
:&=
\gamma V_{1,d}(x)+(1-\gamma)V_{2,d}(x)\\
&=|x-z_d|^2+\gamma(1-\gamma)d^2,
\end{align*}
where $z_d:=(1-\gamma)de_1$. Then
\[
m_\gamma(d)
:=
\min_{x\in\R^2}W_{\gamma,d}(x)
=
\gamma(1-\gamma)d^2.
\]
Now, we state our first main result.

\begin{theorem}
\label{thm:main}
Let $0<a_1,a_2<a^*$ and $\beta^*$ be given by \eqref{eq:beta-star}.
For the trapping potentials \eqref{eq:harmonic-traps}, there exists a separation threshold
\[
d_c:=\Lambda_*^{-1/4}\in(0,\infty),
\]
depending only on $(a_1,a_2)$, such that
\begin{enumerate}
\item [$(i)$] if $0<d<d_c$, then
\[
e_*(d)=m_\gamma(d),
\]
and $e_*(d)$ is not attained;
\item  [$(ii)$] if $d>d_c$, then
\[
e_*(d)<m_\gamma(d),
\]
and $e_*(d)$ is attained;
\item [$(iii)$] if $d=d_c$,
\[
e_*(d_c)=m_\gamma(d_c).
\]
In particular, $e_*(d_c)$ is attained if and only if the
$\Lambda_*$ defined in \eqref{eq:Lambda-def} is attained.
\end{enumerate}
\end{theorem}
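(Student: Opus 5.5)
The plan is to subtract the threshold $m_\gamma(d)$ from the energy and reduce the sign of $e_*(d)-m_\gamma(d)$ to a scale-invariant ratio, whose best value will be $\Lambda_*$.

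\emph{Upper bound and the key inequality.} Write $E^d_{a_1,a_2,\beta^*}=D+P_d$, with
\[
D(u_1,u_2)=\int_{\R^2}\Bigl(|\nabla u_1|^2+|\nabla u_2|^2-\tfrac{a_1}{2}u_1^4-\tfrac{a_2}{2}u_2^4-\beta^*u_1^2u_2^2\Bigr)\dd x,
\qquad
P_d=\int_{\R^2}\bigl(V_{1,d}u_1^2+V_{2,d}u_2^2\bigr)\dd x .
\]
First, I will recall from \cite{GuoLiWeiZeng2019}, via the Gagliardo--Nirenberg inequality with optimal constant $a^*$, that $D\ge0$ on $\Mcal$ when $\beta=\beta^*$. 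Equality is approached only along $(\sqrt{\gamma}\,w,\sqrt{1-\gamma}\,w)$ with $w$ a rescaled and translated multiple of $Q$. Taking the trial pair $u_i=c_i\tau Q(\tau(x-z_d))/\|Q\|_2$ with $c_1^2=\gamma$, $c_2^2=1-\gamma$, and letting $\tau\to\infty$, gives $D=0$ and $P_d\to\gamma V_{1,d}(z_d)+(1-\gamma)V_{2,d}(z_d)=m_\gamma(d)$. Hence $e_*(d)\le m_\gamma(d)$ for all $d>0$.

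\emph{Lower bound and the threshold.} Next I expand $P_d$ exactly, using that the potentials are quadratic. Set $\rho=u_1^2+u_2^2$ and $t=\int u_2^2$, and measure $x$ from $z_d$. Then
\[
P_d-m_\gamma(d)=\int_{\R^2}|x-z_d|^2\rho\,\dd x-2\gamma d\int_{\R^2}(x-z_d)_1u_2^2\,\dd x+2(1-\gamma)d\int_{\R^2}(x-z_d)_1u_1^2\,\dd x+d^2\,(t-(1-\gamma))\,\theta(\gamma),
\]
which is a quadratic polynomial in $d$. I will double-check this expansion. Applying the dilation $u\mapsto\lambda u(\lambda\cdot)$ about $z_d$ scales $D$ by $\lambda^2$, the second moment by $\lambda^{-2}$, the first moments by $\lambda^{-1}$, and leaves $t$ fixed. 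Optimizing in $\lambda$ therefore shows that $e_*(d)<m_\gamma(d)$ exactly when some admissible pair makes a negative combination of $\sqrt{D\cdot\text{(second moment)}}$, $d\cdot\text{(first moments)}$ and $d^2(\text{mass defect})$.

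Homogeneity should then collapse everything to one scale-invariant ratio. I expect $\Lambda_*$ in \eqref{eq:Lambda-def} to be the best constant $\Lambda$ in an inequality of the form
\[
D\cdot(\text{second moment})\ \ge\ \Lambda\,(\text{first-moment / mass-defect functional})^4 .
\]
In that case the sign of $e_*(d)-m_\gamma(d)$ is the sign of $\Lambda_*-d^{-4}$ up to the extremal direction, which gives $d_c=\Lambda_*^{-1/4}$. For $d<d_c$, this inequality gives $E\ge m_\gamma(d)$. Combined with the upper bound, $e_*(d)=m_\gamma(d)$. Non-attainment then follows as in \cite{GuoLiWeiZeng2019}: equality would force $D=0$, hence a $Q$-type profile with positive potential excess, a contradiction. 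For $d>d_c$, a near-extremal pair for $\Lambda_*$, suitably dilated, gives $e_*(d)<m_\gamma(d)$. For $d=d_c$, the infimum equals $m_\gamma(d_c)$, and a minimizer exists precisely when the auxiliary ratio has an extremal, which gives the equivalence in (iii).

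\emph{Existence for $d>d_c$ (main obstacle).} The step I expect to be hardest is the compactness needed for existence under the strict inequality $e_*(d)<m_\gamma(d)$. A minimizing sequence $(u_{1,n},u_{2,n})$ is bounded in $\Sigma\times\Sigma$ unless it blows up, because the harmonic confinement makes the embedding $\Sigma\hookrightarrow L^p$ compact for $2\le p<\infty$. So I only need to exclude blow-up. If $\|\nabla u_{i,n}\|_2\to\infty$, the blow-up analysis of \cite{GuoLiWeiZeng2019} shows that the rescaled pair converges to $(\sqrt{\gamma}\,w_0,\sqrt{1-\gamma}\,w_0)$ with $w_0$ a multiple of $Q$. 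Both components then concentrate at a common point $\bar x$, giving $\liminf E\ge\gamma V_{1,d}(\bar x)+(1-\gamma)V_{2,d}(\bar x)\ge m_\gamma(d)$, which contradicts the strict inequality. This is essentially condition \eqref{eq:intro-GLWZ-condition}, so existence for $d>d_c$ follows from the result of \cite{GuoLiWeiZeng2019}. The genuinely new work is the exact quadratic-in-$d$ reduction and proving that $\Lambda_*\in(0,\infty)$, with $\Lambda_*>0$ following from the Gagliardo--Nirenberg deficit being controlled by the moments.
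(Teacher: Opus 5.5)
Your outline (bubble upper bound at $z_d$, reducing the sign of $e_*(d)-m_\gamma(d)$ to a scale-invariant ratio, existence for $d>d_c$ from the strict binding criterion of \cite{GuoLiWeiZeng2019}) agrees with the paper. However, you never carry out the step that ties the threshold to the specific $\Lambda_*$ of \eqref{eq:Lambda-def}.

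Dilating about $z_d$ at fixed $d$ gives $\lambda^2D+\lambda^{-2}M_2+\lambda^{-1}dM_1+d^2M_0$, where $M_2,M_1,M_0$ are your second-moment, first-moment and mass-defect terms and your $D$ is the paper's $\Fcal$. Because of the $\lambda^{-1}$ term, optimizing in $\lambda$ does not give $2\sqrt{DM_2}$ alongside an unchanged $dM_1$. The product-type inequality you then ``expect'' is neither derived nor the quantity in \eqref{eq:Lambda-def}.

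The missing idea is to let the dilation change $d$. Set $(T_dV)(x)=d^{-1}V(x/d)$ and let $\Pcal$ be the unit-separation potential energy, with $c_\gamma=m_\gamma(1)$. Then $\Fcal(T_dV)=d^{-2}\Fcal(V)$ and $\Pcal_d(T_dV)=d^2\Pcal(V)$, so
\[
E^{d}_{a_1,a_2,\beta^*}(T_dV)-m_\gamma(d)=d^{2}\Bigl[d^{-4}\Fcal(V)-\bigl(c_\gamma-\Pcal(V)\bigr)\Bigr].
\]
Equivalently, $e_*(d)=d^2H(d^{-4})$ with $H(\lambda)=\inf_{\Mcal}(\Pcal+\lambda\Fcal)$; your expression becomes exactly this after substituting $\mu=\lambda d$. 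Now compare $d^{-4}$ with $\sup(c_\gamma-\Pcal)/\Fcal$, and use that $\Fcal(V)=0$ forces $\Pcal(V)>c_\gamma$ (Lemmas~\ref{lem:F-zero} and \ref{lem:bubble-P}). This gives (i)--(iii) directly, including the attainment equivalence at $d_c$.

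The more serious gap is that the statement asserts $d_c\in(0,\infty)$, i.e.\ $0<\Lambda_*<\infty$, and you give no argument for $\Lambda_*<\infty$. That finiteness is exactly what makes part (i) non-vacuous. You also put the difficulty on the wrong side: $\Lambda_*>0$ is immediate from the semi-trivial state $(\tau q(\tau x),0)$ with $\tau$ large.

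Finiteness means $(c_\gamma-\Pcal(U))_+\le C\Fcal(U)$ on $\Mcal$. The dangerous sequences have $\Fcal(U_n)\to0$ and $K_n=\|\nabla U_n\|_2^2\to\infty$. At scale $\eps_n=K_n^{-1/2}$ they look like $\Phi_{\eps_n,x_n}$, whose potential energy $c_\gamma+|x_n-z_\gamma|^2+\vartheta\eps_n^2$ exceeds $c_\gamma$ by only about $\vartheta\eps_n^2$. So you must show the deviation from the bubble can lower $\Pcal$ by at most $C\eps_n\sqrt{\Fcal(U_n)}$. Qualitative blow-up convergence of the rescaled pair to $\Phi$, which is all you invoke, gives no rate and cannot close this.

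The paper's key tool is the quantitative stability estimate $\operatorname{dist}_{H^1\times H^1}(U,\Zcal)^2\le C\Fcal(U)$ on $\Ncal_+$ (Proposition~\ref{prop:stability}). It is proved via concentration-compactness, the centering Lemma~\ref{lem:centering}, coercivity of $\Qcal_\perp$, and nondegeneracy of $Q$, which gives coercivity of $\Qcal_+$ on $T$. It yields $\delta_n+\frac{\vartheta}{2}\eps_n^2\le C\eps_n\sqrt{\Fcal(U_n)}$, which contradicts $\Fcal(U_n)/\delta_n\to0$ (Proposition~\ref{prop:Lambda-finite}). Without an estimate of this type, your argument does not establish $d_c>0$.
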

\begin{remark}\rm
Theorem \ref{thm:main} shows that for the displaced harmonic trapping potentials
\eqref{eq:intro-general-harmonic}, there exists
a separation threshold in the distance between the two trap centers that determines the existence or nonexistence of minimizers.
Our method is based
on the variational structure of $\Lambda_*$ together with a subtle scaling argument, see Section \ref{sec:scaling}.
 The main difficulty is to prove
\[
0<\Lambda_*<\infty.
\]
The positivity directly follows from a suitable test function, whereas the finiteness is more delicate.
Using blow-up analysis, an unbounded maximizing sequence,
after rescaling and translation, may approach the manifold
of an optimizer for the
Gagliardo--Nirenberg type inequality \eqref{eq:vector-GN}.
To this end, we establish a quantitative stability estimate near this manifold in
Section~\ref{sec:stability}.
Its proof combines concentration-compactness, the coercivity of the linearized quadratic forms around the optimizer and the standard nondegeneracy of $Q$.
\end{remark}

In what follows, we consider the case $d=d_c$.
By Theorem \ref{thm:main}-($iii$),  it remains to verify that
 $\Lambda_*$ is attained.
A simple compactness argument implies that
 if the maximizing sequence of  $\Lambda_*$ is bounded, then  $\Lambda_*$ is attained.
The main difficulty is therefore to exclude
unbounded maximizing sequences. By  the blow-up analysis, we can show that if the sequence is unbounded, there exists a constant $\Lambda_0\geq0$ defined by \eqref{eq:Lambda-conc} such that  $\Lambda_*\leq \Lambda_0$.
On the other hand, a suitable family of semi-trivial test functions yields the strict inequality $\Lambda_*> \Lambda_0$.
Consequently, $\Lambda_*$ is attained.
The details are presented in Section \ref{sec:endpoint} and
the corresponding result is as follows.
\begin{theorem}\label{thm:main-2}
Under the setting in Theorem {\rm\ref{thm:main}}, $\Lambda_*$ is attained and then
$e_*(d_c)$ is attained.
\end{theorem}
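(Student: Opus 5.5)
The plan is to apply the direct method to the auxiliary problem $\Lambda_*$ in \eqref{eq:Lambda-def}. The only real issue is loss of compactness. Once $\Lambda_*$ is shown to be attained, the attainment of $e_*(d_c)$ follows immediately from Theorem~\ref{thm:main}-$(iii)$, which says $e_*(d_c)$ is attained if and only if $\Lambda_*$ is. So the work reduces to producing a maximizer for $\Lambda_*$.

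\emph{Step 1 (normalization and the bounded case).} Take a maximizing sequence $(u_{1,n},u_{2,n})$ for $\Lambda_*$. First I will use the invariances of the quotient defining $\Lambda_*$: the mass normalization, the $L^2$-invariant scaling $u\mapsto \lambda u(\lambda\,\cdot)$ (under which the numerator and denominator scale homogeneously, which is what yields the exponent $-1/4$ in $d_c$), and translations compatible with the trap geometry. These fix a normalized representative and give nonnegativity by passing to $(|u_{1,n}|,|u_{2,n}|)$. If the sequence is bounded in $\Sigma(\R^2)\times\Sigma(\R^2)$, I extract a weak limit. The compact embedding $\Sigma(\R^2)\hookrightarrow L^q(\R^2)$, $2\le q<\infty$, gives strong convergence of the mass, of the quartic terms, and of the terms linear in the displacement. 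The gradient and trap parts are weakly lower semicontinuous. Since $\Lambda_*>0$ (Theorem~\ref{thm:main}), the limit is nontrivial, the quotient passes to the limit, and the limit is a maximizer.

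\emph{Step 2 (the unbounded case gives $\Lambda_*\le\Lambda_0$).} Suppose instead $\|\nabla u_{i,n}\|_2\to\infty$ along a subsequence. Rescale by $\eps_n^{-1}:=\|(\nabla u_{1,n},\nabla u_{2,n})\|_2$ and translate to a concentration point. Because the quotient stays bounded below by a fixed positive number, the Gagliardo--Nirenberg deficit of \eqref{eq:vector-GN} must vanish along the sequence in the rescaled variables. The quantitative stability estimate of Section~\ref{sec:stability} then shows that the rescaled pair is close to the optimizer manifold $\bigl(\sqrt{\gamma}\,Q,\sqrt{1-\gamma}\,Q\bigr)$, up to scaling and translation, where the square-root weights come from the coercivity analysis at $\beta=\beta^*$. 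I expand the numerator and the denominator around this manifold and use the coercivity of the linearized forms to control the remainder. The result is that the limsup of the quotient is at most the concentration value $\Lambda_0$ of \eqref{eq:Lambda-conc}. I expect this to be the main obstacle: the remainder must be small relative to the leading terms at the precise scale dictated by $\Lambda_0$, and not just $o(1)$.

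\emph{Step 3 (strict inequality $\Lambda_*>\Lambda_0$ via semi-trivial test functions).} I will insert into the quotient a family of semi-trivial pairs, for example $(u,0)$ and $(0,u)$ with $u$ a Gaussian or a dilated $Q$, and optimize over the free scaling and center parameters. These pairs avoid the coupling term altogether, and their trap contribution stays of order one, whereas on the GN manifold $\Lambda_0$ is determined by a balance in which the displacement-linear term degenerates. An explicit computation, combined with the value $\gamma(1-\gamma)d^2$ of $m_\gamma(d)$, should give a quotient strictly larger than $\Lambda_0$. If $\Lambda_0=0$ this is immediate from $\Lambda_*>0$; otherwise I will optimize the Gaussian width explicitly. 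Given this strict inequality, Step~2 rules out unbounded maximizing sequences, so Step~1 applies and $\Lambda_*$ is attained. Theorem~\ref{thm:main}-$(iii)$ then gives that $e_*(d_c)$ is attained.
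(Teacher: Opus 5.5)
Your three-step architecture is the same as the paper's: bounded maximizing sequences converge, unbounded ones force $\Lambda_*\le\Lambda_0$, and semi-trivial competitors give $\Lambda_*>\Lambda_0$. But Step~3 as written does not go through. The constant $\Lambda_0=c_\gamma(1-2\gamma)^2J_\kappa/\vartheta$ contains $J_\kappa$ from \eqref{eq:J-variational}, which has no closed form. No explicit computation on test functions can be compared with $\Lambda_0$ until you have an upper bound on $J_\kappa$, and your proposal never supplies one. This is the missing idea.

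The paper uses two bounds. For $\kappa\le0$ it uses $J_\kappa\le1$, since then $\Qcal_\perp(\zeta)\ge\|\zeta\|_{H^1}^2$. For $0<\kappa<1$ it uses $J_\kappa\le(1-\kappa^2)^{-1}$: test $L_\perp\psi_\kappa=q$ with $\psi_\kappa$, bound $\int Q^2\psi_\kappa^2\le2\|\psi_\kappa\|_2\|\nabla\psi_\kappa\|_2$ by H\"older and \eqref{eqn:GN}, and use $J_\kappa=\langle q,\psi_\kappa\rangle\le\|\psi_\kappa\|_2$.

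This bound cannot be crude. Taking $\zeta=Q$ in \eqref{eq:J-variational} gives $J_\kappa\ge\frac{1}{2(1-\kappa)}=\frac{a^*}{4s_1s_2}$, hence $\Lambda_0\ge\frac{a^*(s_1-s_2)^2}{4(s_1+s_2)^4\vartheta}$. As $a_2\nearrow a^*$ with $a_1$ fixed, both this lower bound and the semi-trivial value $\Lambda_1=\frac{a^*\max\{s_1^2,s_2^2\}}{4(s_1+s_2)^4\vartheta}$ converge to $\frac{a^*}{4s_1^2\vartheta}$. So the comparison is asymptotically tight, and you need an estimate on $J_\kappa$ that is sharp as $\kappa\to1$; $(1-\kappa^2)^{-1}$ is such an estimate.

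The same regime shows that the Gaussian option fails outright. For an $L^2$-normalized Gaussian $u$ one has $\Fcal(u,0)=(1-\frac{a_1}{4\pi})\|\nabla u\|_2^2\ge(1-\frac{a^*}{4\pi})\|\nabla u\|_2^2$, with $1-\frac{a^*}{4\pi}>0$ because the Gaussian is not a Gagliardo--Nirenberg optimizer. The same holds on the second component. Hence every Gaussian semi-trivial quotient is at most $c_\gamma^2/\bigl(4(1-\frac{a^*}{4\pi})\bigr)$, which tends to $0$ as $c_\gamma\to0$, while $\Lambda_0$ stays bounded below.

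You need the dilated optimizer $\phi_\eps=\eps^{-1}q(\cdot/\eps)$, for which $\Fcal(\phi_\eps,0)=s_1^2/(a^*\eps^2)$. The factor $s_2^{-2}$ produced by the competitor $(0,\phi_{\eps,e_1})$ then cancels the vanishing factor $s_2^2$ in $c_\gamma^2=s_1^2s_2^2/(s_1+s_2)^4$. Your heuristic that the displacement-linear term ``degenerates'' is also inaccurate. The term $2\sqrt{c_\gamma}(1-2\gamma)\int q\zeta$ from the second-order expansion of $\Pcal$ is exactly what makes $\Lambda_0>0$ whenever $s_1\ne s_2$.

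Two smaller points. First, the quotient in \eqref{eq:Lambda-def} has no scaling or translation invariance, because the traps are pinned at $0$ and $e_1$ and $c_\gamma$ is fixed. The scaling only converts $e_*(d)$ into $H(d^{-4})$, so the normalization in your Step~1 should be dropped. Second, in the bounded case the real issue is not nontriviality of the limit; mass is preserved by the moment bound coming from $\Pcal(U_n)<c_\gamma$. What must be excluded is $\lim\Pcal(U_n)=c_\gamma$ or $\lim\Fcal(U_n)=0$, and this needs Lemmas~\ref{lem:F-zero} and~\ref{lem:bubble-P} together with $\Lambda_*>0$.
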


\begin{remark}\rm
We next mention two related problems where the existing results
for the critical coupling cases remain incomplete if the trapping potentials have no common minimum point.
  First,
Guo, Zeng and Zhou \cite{GuoZengZhou2017} considered
 the minimization problem of functional \eqref{eq:intro-general-energy} under the
separate
mass constraints
\[
\|u_1\|_2^2=\|u_2\|_2^2=1.
\]
If $a_1=a_2=a^*-\beta\in (0,a^*)$, they proved in
\cite[Theorem~1.3]{GuoZengZhou2017} that if the trapping
potentials have no common minimum point,
the existence of minimizers follows from a strict binding condition similar to \eqref{eq:intro-GLWZ-condition}.
The recent work \cite{PengZengZhangZhou2026} completes the
classification with respect to the interaction parameters by introducing a
critical surface.
On this
critical surface, the nonexistence result still
requires the  trapping potentials have a common minimum point.
Second, for two-component Bose--Einstein condensates with an internal atomic Josephson
junction, Peng, Wang, Zeng and Zhou \cite{PengWangZengZhou2024} studied the
critical coupling case $\beta=\beta^*$. As shown in \cite[Theorem~1.1]{PengWangZengZhou2024},
if the trapping
potentials have no common minimum point, the existence result is again obtained
 under a strict binding condition similar to \eqref{eq:intro-GLWZ-condition}.
The arguments developed in the present paper may provide some useful ideas for these two problems, which we leave to interested readers.
\end{remark}

\begin{remark}\rm
For the displaced harmonic trapping potentials, the present paper gives a sharp characterization of the existence of ground states at the critical coupling $\beta=\beta^*$.
A natural continuation is to study the limit behavior of minimizers for $d<d_c$ as $\beta\nearrow\beta^*$.
We will investigate this problem in future work, where a new concentration phenomenon, different from \cite{GuoLiWeiZeng2019}, is expected to arise.
\end{remark}

\medskip
The remainder of the paper is organized as follows.   In Section~\ref{sec:prelim}, we give some
preliminaries.   Section~\ref{sec:stability}
 is devoted to accomplishing
 a quantitative stability
estimate.  In Section~\ref{sec:threshold},
we introduce the variational structure of $\Lambda_*$ and verify that
$0<\Lambda_*<\infty$.  In Section~\ref{sec:scaling}, we use the scaling argument to identify the separation threshold and complete
the proof of
Theorem~\ref{thm:main}.  In Section~\ref{sec:endpoint}, we
 prove Theorem~\ref{thm:main-2}.

\section{Preliminaries}
\label{sec:prelim}
In this section, we give
some preliminary results that will be used in the later arguments.
First, from \cite[Lemma A.2]{GuoLiWeiZeng2019},
we introduce the following Gagliardo--Nirenberg type inequality
\begin{equation}\label{eq:vector-GN}
\int_{\R^2}(u_1^2+u_2^2)^2\dd x
\le
\frac{2}{a^*}
\int_{\R^2}(|\nabla u_1|^2+|\nabla u_2|^2)\dd x
\int_{\R^2}(u_1^2+u_2^2)\dd x,
\end{equation}
where $(u_1,u_2)\in H^1(\mathbb R^2)\times H^1(\mathbb R^2)$ and the best constant is attained at $(Q\sin \theta,Q \cos\theta)$ for any $\theta\in [0,2\pi)$.
Moreover, from \cite{Kwong1989}, we have the single component Gagliardo--Nirenberg inequality
\begin{equation}\label{eqn:GN}
\int_{\mathbb R^2}u^4\mathrm dx
\le
\frac2{a^*}
\int_{\mathbb R^2}|\nabla u|^2\mathrm dx
\int_{\mathbb R^2}u^2\mathrm dx,
\end{equation}
where $u\in H^1(\mathbb R^2)$ and the best constant is attained at $Q$.
Recall that $Q$ is the unique positive solution of \eqref{eqn:Q-equation}. Then the Pohozaev identity yields
\begin{equation*}
\int_{\mathbb{R}^{2}}|\nabla Q|^2\mathrm{d}x
=\int_{\mathbb{R}^{2}}Q^2\mathrm{d}x=\frac{1}{2}
\int_{\mathbb{R}^{2}}Q^4\mathrm{d}x,
\end{equation*}
and it follows from \cite[Proposition 4.1]{Gidas-1981} that
\begin{equation}\label{eqn:Q-decay}
|\nabla Q(x)|,\ Q(x)=O(|x|^{-\frac{1}{2}}e^{-|x|})\ \mbox{as}\ |x|\rightarrow\infty.
\end{equation}
Set $q:=\frac{Q}{\sqrt{a^*}}$,
then
\begin{equation}
\|q\|_2^2=\|\nabla q\|_2^2=1,\
\|q\|_4^4=\frac{2}{a^*}\ \text{and}\
-\Delta q+q=a^*q^3.
\label{eq:q-properties}
\end{equation}
By \eqref{eqn:Q-decay}, the following result directly holds.
Meanwhile, we remark that the notation $C$ in the context denotes the positive constants,
which might be changed from line to line and even in the same line.

\begin{lemma}
\label{lem:q-tail}
There exist constants $\sigma>0$, $C>0$ and $R>0$ such that
\begin{equation*}
0<q(x)\le Ce^{-\sigma|x|} \ \text{for}\ |x|>R
\end{equation*}
and
\begin{equation*}
\int_{|x|>R}(1+|x|^2)q^2(x)\dd x
\le Ce^{-\sigma R}.
\end{equation*}
\end{lemma}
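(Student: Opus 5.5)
The plan is to derive both estimates directly from the decay bound \eqref{eqn:Q-decay}, using $q=Q/\sqrt{a^*}$. Positivity of $q$ everywhere holds because $Q$ is the positive radial solution of \eqref{eqn:Q-equation}. For the pointwise bound, \eqref{eqn:Q-decay} gives $R_0>0$ and $C_0>0$ such that
\[
Q(x)\le C_0|x|^{-1/2}e^{-|x|}\quad\text{for }|x|>R_0.
\]
For $|x|>\max\{R_0,1\}$ we have $|x|^{-1/2}\le 1$. Fixing any $\sigma\in(0,1)$ then gives $q(x)\le (C_0/\sqrt{a^*})e^{-|x|}\le Ce^{-\sigma|x|}$. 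We keep $\sigma$ strictly below $1$ so that there is room to absorb polynomial weights in the second step.

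For the integral bound, square the pointwise estimate and integrate in polar coordinates. For $|x|>R\ge \max\{R_0,1\}$,
\[
\int_{|x|>R}(1+|x|^2)q^2\,\dd x\le C\int_R^\infty (1+r^2)\,r\,e^{-2r}\,\dd r .
\]
Since $(1+r^2)r\,e^{-2r}\le C_\sigma e^{-(1+\sigma)r}$ for $r\ge1$, the tail integral is at most $Ce^{-(1+\sigma)R}\le Ce^{-\sigma R}$. It is simplest to use the sharper decay $e^{-2r}$ from \eqref{eqn:Q-decay} here directly. This way the same $\sigma$ works in both estimates, and it avoids the failure one gets by squaring $e^{-\sigma r}$ with $\sigma\le 1/2$.

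Both statements require only that $R$ be large, so they hold simultaneously for a single choice of $R=\max\{R_0,1\}$ and the same $\sigma$. I expect no real obstacle. The only care needed is to keep one common $(\sigma,C,R)$ and to dominate the polynomial factor $(1+r^2)r$ by an exponential with a slightly smaller rate.
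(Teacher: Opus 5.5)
Your proof is correct and matches the paper, which simply says that both bounds follow directly from the decay estimate \eqref{eqn:Q-decay} and the positivity of $Q$. Your integral bound holds for every $R\ge\max\{R_0,1\}$ with $C$ independent of $R$, which is the form used later with $R_n=L|\log\eps_n|$. Your remark about a failure when squaring $e^{-\sigma r}$ is unfounded, since $\int_R^\infty(1+r^2)r\,e^{-2\sigma r}\,\dd r\le C_\sigma e^{-\sigma R}$ for every $\sigma>0$.
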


In the following, we present some variational decompositions at the critical
coupling.
Set $s_1:=\sqrt{a^*-a_1}\
\text{and}\
s_2:=\sqrt{a^*-a_2},$
then
\begin{equation*}
\gamma=\frac{s_2}{s_1+s_2},
\
1-\gamma=\frac{s_1}{s_1+s_2},
\
\beta^*=a^*+s_1s_2.
\end{equation*}
Define the translation invariant part of the functional $E_{a_1,a_2,\beta^*}^{d}(U)$ with $U=(u_1,u_2)$ by
\begin{align*}
\Fcal(U)
:={}&
\int_{\R^2}(|\nabla u_1|^2+|\nabla u_2|^2)\dd x
-\frac{a_1}{2}\int_{\R^2}u_1^4\dd x
-\frac{a_2}{2}\int_{\R^2}u_2^4\dd x-\beta^*\int_{\R^2}u_1^2u_2^2\dd x,
\end{align*}
then the following decomposition holds.
\begin{lemma}
\label{lem:F-decomp}
For every $U=(u_1,u_2)\in\Mcal$,
\begin{equation*}
\Fcal(U)
=
D(U)
+\frac12\int_{\R^2}
\left(s_1u_1^2-s_2u_2^2\right)^2\dd x,
\end{equation*}
where
\begin{equation*}
D(U)
=
\int_{\R^2}(|\nabla u_1|^2+|\nabla u_2|^2)\dd x
-\frac{a^*}{2}
\int_{\R^2}(u_1^2+u_2^2)^2\dd x.
\end{equation*}
In particular, $\Fcal(U)\ge0$.
\end{lemma}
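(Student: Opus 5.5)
The decomposition of Lemma~\ref{lem:F-decomp} is a pointwise algebraic identity for the quartic terms, integrated over $\R^2$. The plan is to expand $\frac12(s_1u_1^2-s_2u_2^2)^2$ and check that, together with the quartic part of $D(U)$, it reproduces the nonlinear part of $\Fcal(U)$. The gradient terms appear identically in $\Fcal(U)$ and $D(U)$, so only the quartic terms need to be compared.

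Concretely, we expand
\[
-\frac{a^*}{2}(u_1^2+u_2^2)^2+\frac12\left(s_1u_1^2-s_2u_2^2\right)^2
=-\frac{a^*-s_1^2}{2}u_1^4-\frac{a^*-s_2^2}{2}u_2^4-(a^*+s_1s_2)u_1^2u_2^2 .
\]
Since $s_i^2=a^*-a_i$, the coefficients on the right are $-\frac{a_1}{2}$, $-\frac{a_2}{2}$ and $-(a^*+s_1s_2)=-\beta^*$, using $\beta^*=a^*+s_1s_2$. Integrating over $\R^2$ then gives exactly $\Fcal(U)=D(U)+\frac12\int_{\R^2}(s_1u_1^2-s_2u_2^2)^2\,\dd x$. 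All integrals are finite because $u_i\in H^1(\R^2)\subset L^4(\R^2)$.

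For the nonnegativity, the second term is clearly $\ge 0$. For $D(U)$ we use the constraint $\int_{\R^2}(u_1^2+u_2^2)\,\dd x=1$ in \eqref{eq:vector-GN}, which gives
\[
\frac{a^*}{2}\int_{\R^2}(u_1^2+u_2^2)^2\,\dd x\le \int_{\R^2}\left(|\nabla u_1|^2+|\nabla u_2|^2\right)\dd x .
\]
Hence $D(U)\ge0$, and therefore $\Fcal(U)\ge0$.

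There is no genuine obstacle here. The only points needing care are the bookkeeping of the cross term $-a^*u_1^2u_2^2-s_1s_2u_1^2u_2^2=-\beta^*u_1^2u_2^2$ and the fact that the mass constraint is what reduces \eqref{eq:vector-GN} to $D(U)\ge0$.
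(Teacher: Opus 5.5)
Your proposal is correct and follows the paper's own proof. Both use the pointwise identity for the quartic terms, based on $a_i=a^*-s_i^2$ and $\beta^*=a^*+s_1s_2$, and then apply the vector Gagliardo--Nirenberg inequality \eqref{eq:vector-GN} under the unit mass constraint to get $D(U)\ge0$, hence $\Fcal(U)\ge0$.
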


\begin{proof}
Noting that
\[
a_1=a^*-s_1^2,\
a_2=a^*-s_2^2,\
\beta^*=a^*+s_1s_2,
\]
we have
\[
-\frac{a_1}{2}u_1^4-\frac{a_2}{2}u_2^4-\beta^*u_1^2u_2^2
=
-\frac{a^*}{2}(u_1^2+u_2^2)^2
+\frac12(s_1u_1^2-s_2u_2^2)^2.
\]
Since $U\in\Mcal$, then \eqref{eq:vector-GN} implies that $D(U)\ge0$ and $\Fcal(U)\ge0$.
\end{proof}

For $\eps>0$ and $y\in\R^2$, define
\begin{equation}\label{eqn:Phi-definition}
q_{\eps,y}(x):=\frac1{\eps}q\left(\frac{x-y}{\eps}\right)
\ \text{and}\
\Phi_{\eps,y}
:=
\left(
\sqrt{\gamma} q_{\eps,y},
\sqrt{1-\gamma} q_{\eps,y}
\right).
\end{equation}
\begin{lemma}
\label{lem:F-zero}
Assume $U=(u_1,u_2)\in\Mcal$ and $u_1,u_2\ge0$.
Then $\Fcal(U)=0$ if and only if $U=\Phi_{\eps,y}$ for some
$\eps>0$ and $y\in\R^2$.
\end{lemma}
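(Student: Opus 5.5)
By Lemma~\ref{lem:F-decomp}, $\Fcal(U)=0$ holds exactly when both nonnegative terms in the decomposition vanish: $D(U)=0$ and $s_1u_1^2=s_2u_2^2$ a.e. The proof splits into these two conditions, and each direction is handled separately.

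\emph{Sufficiency.} Take $U=\Phi_{\eps,y}$. By \eqref{eq:q-properties} and scaling, $\|\sqrt\gamma q_{\eps,y}\|_2^2+\|\sqrt{1-\gamma}q_{\eps,y}\|_2^2=\|q\|_2^2=1$, so $U\in\Mcal$. Moreover
\[
\int(|\nabla u_1|^2+|\nabla u_2|^2)=\eps^{-2}, \qquad \frac{a^*}{2}\int (u_1^2+u_2^2)^2=\frac{a^*}{2}\eps^{-2}\cdot\frac{2}{a^*}=\eps^{-2},
\]
which gives $D(U)=0$. For the second term, $s_1\gamma=s_1s_2/(s_1+s_2)=s_2(1-\gamma)$, so $s_1u_1^2-s_2u_2^2\equiv0$. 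Hence $\Fcal(U)=0$.

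\emph{Necessity.} Suppose $\Fcal(U)=0$ with $u_1,u_2\ge0$. The identity $s_1u_1^2=s_2u_2^2$ together with nonnegativity gives $u_2=\sqrt{s_1/s_2}\,u_1$. Set $w:=\sqrt{u_1^2+u_2^2}=\sqrt{1+s_1/s_2}\,u_1$, so that $u_1=\sqrt\gamma\, w$ and $u_2=\sqrt{1-\gamma}\,w$, because $1/(1+s_1/s_2)=\gamma$. Since $u_1$ and $u_2$ are proportional, $|\nabla u_1|^2+|\nabla u_2|^2=|\nabla w|^2$. Thus $D(U)=0$ becomes
\[
\|\nabla w\|_2^2=\frac{a^*}{2}\|w\|_4^4, \qquad \|w\|_2=1.
\]
This says $w\ge0$ is an optimizer of the scalar inequality \eqref{eqn:GN}, and $w\not\equiv0$.

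The crux is the classification of nonnegative optimizers of \eqref{eqn:GN}. We invoke the standard fact, due to Weinstein and Kwong \cite{Kwong1989}, that every optimizer has the form $w=cQ(\lambda(x-y))$. The argument runs as follows. An optimizer satisfies an Euler--Lagrange equation $-\Delta w+\mu w=\nu w^3$ with $\mu,\nu>0$. Rescaling reduces this to \eqref{eqn:Q-equation}. Since $w\ge0$, $w\not\equiv0$, the strong maximum principle gives $w>0$, and uniqueness of positive solutions (Kwong, after Gidas--Ni--Nirenberg radial symmetry about some $y$) identifies $w$ with a dilated translate of $Q$. Finally the normalizations $\|w\|_2=1$ and equality in GN fix the amplitude, forcing $w=q_{\eps,y}$ for some $\eps>0$ and $y\in\R^2$. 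The only potential subtlety is the regularity needed to apply the maximum principle, which follows from elliptic regularity for the cubic equation in $\R^2$. With this, $U=\Phi_{\eps,y}$.
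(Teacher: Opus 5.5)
Your proof is correct, but it uses the two conditions from Lemma~\ref{lem:F-decomp} in the opposite order from the paper.

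The paper starts from $D(U)=0$. With $r=(u_1^2+u_2^2)^{1/2}$, it squeezes $\|\nabla r\|_2^2\le\|\nabla U\|_2^2=\frac{a^*}{2}\|r\|_4^4\le\|\nabla r\|_2^2$. This shows that $r$ is a scalar optimizer, hence $r=q_{\eps,y}$, and that equality holds in \eqref{eqn:nabla-r}. Writing $U=r\omega$ (legitimate because $r=q_{\eps,y}>0$), this forces $\omega$ to be constant. The constraint $s_1u_1^2=s_2u_2^2$ is used only at the end, to identify $\omega=(\sqrt\gamma,\sqrt{1-\gamma})$.

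You instead use the pointwise constraint together with $u_i\ge0$ first. Then $u_1,u_2$ are proportional, $w$ lies in $H^1$ automatically, and $|\nabla u_1|^2+|\nabla u_2|^2=|\nabla w|^2$ holds identically. This avoids the inequality \eqref{eqn:nabla-r}, its equality case, and the polar decomposition $U=r\omega$, so your argument is shorter and more elementary.

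The paper's ordering buys generality. $D(U)=0$ alone already classifies all unit-mass optimizers of \eqref{eq:vector-GN} as $\omega q_{\eps,y}$ with $\omega$ a constant unit vector. There, nonnegativity enters only to fix the signs of $c_1,c_2$, whereas your first step needs $u_i\ge0$ to take pointwise square roots.

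Both proofs rest on the classification of optimizers of \eqref{eqn:GN}, which the paper quotes and you sketch. One small correction to your sketch: equality in \eqref{eqn:GN} is invariant under $w\mapsto cw(\lambda\,\cdot)$, so it fixes nothing. It is the mass condition $\|w\|_2=1$ alone that forces $c=\lambda/\sqrt{a^*}$ in $w=cQ(\lambda(\cdot-y))$, i.e.\ $w=q_{\eps,y}$ with $\eps=1/\lambda$.
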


\begin{proof}
If $U=\Phi_{\eps,y}$, the equality holds in \eqref{eq:vector-GN} and
$s_1u_1^2-s_2u_2^2=0$. By Lemma \ref{lem:F-decomp}, $\Fcal(U)=0$.
Conversely, if $\Fcal(U)=0$, Lemma \ref{lem:F-decomp} gives
\begin{equation}\label{eqn:s-1-s-2}
D(U)=0\ \text{and}\ s_1u_1^2=s_2u_2^2\ \text{a.e. in}\ \mathbb R^2.
\end{equation}
Let $r=(u_1^2+u_2^2)^{1/2}$, then
\begin{equation}\label{eqn:nabla-r}
|\nabla r|^2\le|\nabla u_1|^2+|\nabla u_2|^2.
\end{equation}
Using the Gagliardo--Nirenberg inequality
\eqref{eqn:GN} and $D(U)=0$, we can infer that
the equality holds in \eqref{eqn:nabla-r}
and
$r$ is an optimizer with
$r=q_{\eps,y}$ for some $\eps,y$.
Writing $U=r \omega$ with $|\omega|=1$, we have
$\nabla \omega=0$ and $\omega$ is a constant vector. Thus,  there are $c_1,c_2\ge0$ with $c_1^2+c_2^2=1$ such that
\[
u_1=c_1q_{\eps,y}\ \text{and}\ u_2=c_2q_{\eps,y},
\]
which, together with \eqref{eqn:s-1-s-2}, imply that
$c_1^2=\gamma$ and $c_2^2=1-\gamma$. This completes the proof.
\end{proof}

For unit separation, define the potential energy
\begin{equation*}
\Pcal(U)
:=
\int_{\R^2}|x|^2u_1^2\dd x
+
\int_{\R^2}|x-e_1|^2u_2^2\dd x.
\end{equation*}
Let $z_\gamma:=(1-\gamma)e_1$, $c_\gamma:=\gamma(1-\gamma)$
and
$$
\vartheta
:=
\int_{\R^2}|x|^2q(x)^2\dd x>0,
$$
then the potential energy of $\Phi_{\eps,y}$ is as follows.
\begin{lemma}
\label{lem:bubble-P} There holds
\begin{equation*}
\Pcal(\Phi_{\eps,y})
=
c_\gamma+|y-z_\gamma|^2+\vartheta\eps^2.
\end{equation*}
\end{lemma}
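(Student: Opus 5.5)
The identity follows by direct computation: we substitute the explicit form of $\Phi_{\eps,y}$, change variables, and use the normalization and symmetry of $q$ from \eqref{eq:q-properties}. By \eqref{eqn:Phi-definition},
\[
\Pcal(\Phi_{\eps,y})=\int_{\R^2}\bigl(\gamma|x|^2+(1-\gamma)|x-e_1|^2\bigr)q_{\eps,y}^2(x)\dd x .
\]
This is the weighted potential $W_{\gamma,1}$ integrated against $q_{\eps,y}^2$.

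First we complete the square in the weight. Expanding gives
\[
\gamma|x|^2+(1-\gamma)|x-e_1|^2=|x|^2-2(1-\gamma)x\cdot e_1+(1-\gamma).
\]
Since $z_\gamma=(1-\gamma)e_1$, this equals $|x-z_\gamma|^2+(1-\gamma)-(1-\gamma)^2=|x-z_\gamma|^2+c_\gamma$, which is the $d=1$ case of the formula for $W_{\gamma,d}$ in the introduction.

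Next we substitute $x=y+\eps z$. Then $q_{\eps,y}^2(x)\dd x=q^2(z)\dd z$, so $\int q_{\eps,y}^2=\|q\|_2^2=1$, and the constant $c_\gamma$ contributes exactly $c_\gamma$. For the remaining term,
\[
|x-z_\gamma|^2=|y-z_\gamma|^2+2\eps (y-z_\gamma)\cdot z+\eps^2|z|^2 .
\]
Integrating against $q^2(z)\dd z$, the first term gives $|y-z_\gamma|^2$ and the last gives $\eps^2\vartheta$. The cross term vanishes because $q$ is radial, so $\int_{\R^2} z\,q^2(z)\dd z=0$. This integral converges absolutely by Lemma \ref{lem:q-tail} (or by $\vartheta<\infty$), and the same bound justifies every integral above.

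Adding the three contributions yields $c_\gamma+|y-z_\gamma|^2+\vartheta\eps^2$. There is no real obstacle here. The only points that need care are the correct completion of the square and the observation that the first moment of $q^2$ vanishes by radial symmetry.
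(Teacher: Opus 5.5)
Your proposal is correct and follows essentially the paper's argument: a change of variables, the vanishing of the first moment $\int_{\R^2} x\,q^2\,\dd x$ by radial symmetry, and completing the square. The only difference is the order of steps: you complete the square in the weight before substituting, whereas the paper first obtains $\gamma|y|^2+(1-\gamma)|y-e_1|^2+\vartheta\eps^2$ and then completes the square in $y$.
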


\begin{proof}
Since $q$ is radial, $\int_{\R^2}xq^2 \dd x=0$.  Hence,
\[
\Pcal(\Phi_{\eps,y})
=
\gamma|y|^2+(1-\gamma)|y-e_1|^2+\vartheta\eps^2=
c_\gamma+|y-z_\gamma|^2+\vartheta\eps^2.
\]
\end{proof}

\section{A quantitative stability estimate}
\label{sec:stability}
In this section, we establish a quantitative stability estimate,
which will be used in proving the finiteness of
$\Lambda_*$ in Section~\ref{sec:threshold}.
Define
\begin{equation*}
\Ncal_+
:=
\left\{
U=(u_1,u_2)\in H^1(\R^2)\times H^1(\R^2):
u_i\ge0,\
\|U\|_2^2=1,\
\|\nabla U\|_2^2=1
\right\},
\end{equation*}
where
\[
\|U\|_2^2:=\int_{\R^2}(u_1^2+u_2^2) \dd x,
\
\|\nabla U\|_2^2:=\int_{\R^2}(|\nabla u_1|^2+|\nabla u_2|^2) \dd x.
\]
Let
\begin{equation*}
\Phi(x)
:=
\left(
\sqrt{\gamma} q(x),
\sqrt{1-\gamma} q(x)
\right),
\end{equation*}
then $\Phi(x)$ is an optimizer of \eqref{eq:vector-GN}.
Set manifold $\Zcal:=\{\Phi(\cdot-y):y\in\R^2\}$.
For scalar functions, we use the notation
\[
\langle u,v\rangle
:=
\int_{\R^2} u(x)v(x) \dd x.
\]
For $U=(u_1,u_2)$ and $V=(v_1,v_2)$ in
$L^2(\R^2)\times L^2(\R^2)$, we denote
\[
\langle U,V\rangle_{L^2\times L^2}
:=
\int_{\R^2}\bigl(u_1v_1+u_2v_2\bigr) \dd x,
\]
and
\[
\|U\|_{H^1\times H^1}^2
:=
\|u_1\|_{H^1}^2+\|u_2\|_{H^1}^2
\]
with $\|u\|_{H^1}:=\|u\|_{H^1(\mathbb R^2)}=\left(
\int_{\mathbb R^2}
\bigl(|\nabla u|^2+u^2\bigr)\,\dd x
\right)^{1/2}$.
The distance from $U$ to the manifold $\Zcal$ is
\begin{equation*}
\operatorname{dist}_{H^1\times H^1}(U,\Zcal)
:=
\inf_{y\in\R^2}
\|U-\Phi(\cdot-y)\|_{H^1\times H^1}.
\end{equation*}

\begin{proposition}
\label{prop:stability}
There exist constants $\eta_0>0$ and $C>0$, depending only on
$a_1,a_2$, such that for every $U\in\Ncal_+$ with
$\Fcal(U)\le\eta_0$,
\begin{equation}\label{eqn:quadratic-estimate}
\operatorname{dist}_{H^1\times H^1}(U,\Zcal)^2
\le
C\Fcal(U).
\end{equation}
\end{proposition}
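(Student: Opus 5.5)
Our plan is the standard two-step scheme: a qualitative compactness statement, followed by a local quadratic expansion near $\Zcal$.

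\emph{Step 1 (qualitative closeness).} We first show that for every $\delta>0$ there is $\eta>0$ such that $U\in\Ncal_+$ and $\Fcal(U)\le\eta$ imply $\operatorname{dist}_{H^1\times H^1}(U,\Zcal)\le\delta$. Argue by contradiction with a sequence $U_n\in\Ncal_+$ satisfying $\Fcal(U_n)\to0$. Then $D(U_n)\to0$ and $\|s_1u_{1,n}^2-s_2u_{2,n}^2\|_2\to0$ by Lemma~\ref{lem:F-decomp}. Since $\|\nabla U_n\|_2=\|U_n\|_2=1$, we get $\int(u_{1,n}^2+u_{2,n}^2)^2\to 2/a^*>0$, so vanishing is excluded. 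Dichotomy is excluded by the strict subadditivity of the Gagliardo--Nirenberg quotient, since splitting mass lowers $\int|U|^4$ for the quotient in \eqref{eq:vector-GN}. Hence, after translations $y_n$, $U_n(\cdot+y_n)\to U$ strongly in $H^1\times H^1$. The limit lies in $\Ncal_+$ and has $\Fcal(U)=0$, so Lemma~\ref{lem:F-zero} gives $U=\Phi_{\eps,y}$. The normalization $\|\nabla U\|_2=1$ forces $\eps=1$, so $U\in\Zcal$, a contradiction.

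\emph{Step 2 (local expansion).} For $U$ close to $\Zcal$, minimize $y\mapsto\|U-\Phi(\cdot-y)\|_{H^1\times H^1}$. By the implicit function theorem, using translation invariance and the smallness from Step 1, the minimum is attained, and after translating we may write $U=\Phi+W$ with $\|W\|_{H^1\times H^1}=\operatorname{dist}$ and $W$ orthogonal in $H^1\times H^1$ to $\partial_j\Phi$, $j=1,2$. We then expand. Using the Euler--Lagrange equation $-\Delta\Phi+\Phi=a^*|\Phi|^2\Phi$ and the constraints $\|U\|_2=\|\nabla U\|_2=1$, which give $2\langle\Phi,W\rangle+\|W\|_2^2=0$ and the analogous identity for gradients, the first-order terms cancel. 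We obtain
\[
\Fcal(U)=\tfrac12\mathcal{Q}(W)+O(\|W\|_{H^1\times H^1}^3).
\]
Here $\mathcal{Q}$ splits via the orthonormal frame $e_\parallel=(\sqrt\gamma,\sqrt{1-\gamma})$, $e_\perp=(\sqrt{1-\gamma},-\sqrt\gamma)$. Writing $W=w_\parallel e_\parallel+w_\perp e_\perp$:
\begin{itemize}
\item the $D$ part contributes $\langle L_+w_\parallel,w_\parallel\rangle+\langle L_-w_\perp,w_\perp\rangle$, suitably normalized, where $L_+=-\Delta+1-3a^*q^2$ and $L_-=-\Delta+1-a^*q^2$;
\item the penalty $\frac12\int(s_1u_1^2-s_2u_2^2)^2$ contributes $c\int q^2w_\perp^2$ with $c>0$. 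This uses $s_1\gamma=s_2(1-\gamma)$, so the linearization of $s_1u_1^2-s_2u_2^2$ at $\Phi$ is a nonzero multiple of $q\,w_\perp$.
\end{itemize}

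\emph{Step 3 (coercivity), the main obstacle.} We must show $\mathcal{Q}(W)\ge c\|W\|_{H^1\times H^1}^2-C(\text{constraint errors})$.
\begin{itemize}
\item For the $\perp$ component: $L_-\ge0$ with kernel spanned by $q$, and the penalty $c\int q^2w_\perp^2>0$ controls exactly that direction. Hence $L_-+cq^2$ is coercive on $H^1$.
\item For the $\parallel$ component: $L_+$ has one negative direction and kernel $\operatorname{span}\{\partial_jq\}$, by the nondegeneracy of $Q$. The kernel is removed by the orthogonality conditions. The negative direction is handled by the constraints: to leading order $\langle q,w_\parallel\rangle=O(\|W\|^2)$ and $\langle -\Delta q,w_\parallel\rangle=O(\|W\|^2)$. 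By the classical Weinstein argument, $L_+$ is coercive on $\{q,\partial_jq\}^\perp$, using $\langle L_+^{-1}q,q\rangle<0$ in the $L^2$-critical case, which holds since the scaling generator $\Lambda q=q+x\cdot\nabla q$ satisfies $L_+\Lambda q=-2q$ and $\langle\Lambda q,q\rangle=0$. We expect this borderline degenerate case to be the delicate point. The scaling direction $\Lambda q$ is removed by the gradient-norm constraint instead, so we would impose orthogonality to $q$ and $-\Delta q$ jointly and verify coercivity on that codimension-four subspace by a direct spectral argument.
\end{itemize}

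\emph{Step 4 (conclusion).} Absorbing the cubic remainder for $\operatorname{dist}\le\delta$ small gives \eqref{eqn:quadratic-estimate}. Choosing $\eta_0$ from Step 1 with this $\delta$ finishes the proof. The constants depend only on $a_1,a_2$ through $\gamma$, $s_1$ and $s_2$.
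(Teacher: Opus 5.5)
Your overall architecture is the paper's. Step 1 is Lemma~\ref{lem:qual-compact}; Step 2 is Lemmas~\ref{lem:centering} and \ref{lem:local-expansion}; your treatment of the $\perp$ component is Lemma~\ref{lem:Qperp-coercive}. You modulate by minimizing the $H^1\times H^1$ distance, which gives $H^1$-orthogonality and $\|W\|=\operatorname{dist}$ exactly, instead of the paper's $L^2$-orthogonality and $\operatorname{dist}\le\rho$; this change is harmless.

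The problem is the $\parallel$ coercivity in Step 3, which as written rests on a false statement. Since $L_+\Lambda q=-2q$ and $\langle\Lambda q,q\rangle=0$, you get $\langle L_+^{-1}q,q\rangle=-\tfrac12\langle\Lambda q,q\rangle=0$, not $<0$. So Weinstein's criterion yields only $\langle L_+w,w\rangle\ge0$ on $\{q\}^\perp$. In particular $L_+$ is \emph{not} coercive on $\{q,\pa_1q,\pa_2q\}^\perp$: $\Lambda q$ lies in that space and $\langle L_+\Lambda q,\Lambda q\rangle=-2\langle q,\Lambda q\rangle=0$. This is exactly why the paper imposes the extra condition $\langle\nabla q,\nabla\eta\rangle=0$ coming from $\|\nabla U\|_2=1$.

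Your fallback, imposing orthogonality to $q$ and $-\Delta q$ jointly, is the right fix. However, the ``direct spectral argument'' you defer is the heart of the proof, and it needs the following facts, which the paper supplies in Lemmas~\ref{lem:Qplus-nonnegative} and \ref{lem:Qplus-coercive}:
\begin{enumerate}
\item[(a)] $\Qcal_+\ge0$ on $\{q\}^\perp$. The paper gets this from the second variation of the scalar Gagliardo--Nirenberg functional along $f_t=(q+t\eta)/\|q+t\eta\|_2$; equivalently, use Weinstein's criterion with $\langle L_+^{-1}q,q\rangle=0\le0$.
\item[(b)] The null set of $\Qcal_+$ on $\{q\}^\perp$ is $\operatorname{span}\{\Lambda q,\pa_1q,\pa_2q\}$. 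A null vector satisfies $L_+\eta=cq$, hence $\eta+\tfrac c2\Lambda q\in\ker L_+$, and nondegeneracy of $Q$ finishes it.
\item[(c)] $\langle\nabla q,\nabla\Lambda q\rangle=1\neq0$, so the gradient condition removes $\Lambda q$.
\item[(d)] A weak-limit argument, using that $3Q^2$ is a compact perturbation of $-\Delta+1$, upgrades ``nonnegative with trivial null set on the constrained space'' to coercivity.
\end{enumerate}

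With your $H^1$-orthogonal modulation, the translation condition becomes $\langle w_\parallel,q^2\pa_jq\rangle=0$ rather than $\langle w_\parallel,\pa_jq\rangle=0$, because $(-\Delta+1)\pa_jq=3a^*q^2\pa_jq$. Items (b)--(d) still go through, since $\int q^2\pa_1q\,\pa_2q=0$ and $\int q^2(\pa_jq)^2>0$.

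Finally, the mass and gradient conditions hold only up to $O(\|W\|^2)$. You must therefore project $w_\parallel$ onto the exact codimension-four subspace using the radial correctors $q$ and $\Lambda q$, as in \eqref{eq:eta-tilde}. This costs $O(\|W\|^3)$ in the quadratic form, which is then absorbed as in your Step 4.
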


To prove Proposition \ref{prop:stability}, we first establish the following compactness result.

\begin{lemma}
\label{lem:qual-compact}
Let $\{U_n\}\subset\Ncal_+$ and $\Fcal(U_n)\to0$ as $n\to\infty$, then
\[
\operatorname{dist}_{H^1\times H^1}(U_n,\Zcal)\to0\ \text{as}\ n\to\infty.
\]
\end{lemma}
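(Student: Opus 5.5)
We argue by contradiction and use concentration-compactness. Suppose that, along a subsequence, $\operatorname{dist}_{H^1\times H^1}(U_n,\Zcal)\ge\delta>0$. Since $U_n\in\Ncal_+$, the sequence is bounded in $H^1\times H^1$. By Lemma \ref{lem:F-decomp}, $\Fcal(U_n)\to0$ forces
\[
D(U_n)=1-\frac{a^*}{2}\int_{\R^2}(u_{1,n}^2+u_{2,n}^2)^2\dd x\to0
\quad\text{and}\quad
\int_{\R^2}\bigl(s_1u_{1,n}^2-s_2u_{2,n}^2\bigr)^2\dd x\to0 .
\]
In particular $\int(u_{1,n}^2+u_{2,n}^2)^2\to 2/a^*>0$, so vanishing is excluded: by Lions' lemma there are $y_n\in\R^2$ with $\int_{B_1(y_n)}|U_n|^2\ge c>0$. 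Set $\tilde U_n:=U_n(\cdot+y_n)$. After extracting a subsequence, $\tilde U_n\rightharpoonup U=(u_1,u_2)$ weakly in $H^1\times H^1$, with $U\neq0$, $u_i\ge0$, and $\tilde U_n\to U$ in $L^2_{loc}$ and a.e.

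The next step is to rule out dichotomy by a Brezis--Lieb splitting. Write $\tilde U_n=U+R_n$ with $R_n\rightharpoonup0$. Then
\[
\|\tilde U_n\|_2^2=\|U\|_2^2+\|R_n\|_2^2+o(1),\qquad
\|\nabla\tilde U_n\|_2^2=\|\nabla U\|_2^2+\|\nabla R_n\|_2^2+o(1),
\]
and the quartic term $\int|\cdot|^4$, where $|\cdot|^2=u_1^2+u_2^2$, splits the same way. Put $m:=\|U\|_2^2\in(0,1]$. Applying \eqref{eq:vector-GN} to $U$ and to $R_n$ separately gives
\[
\frac{2}{a^*}+o(1)=\int|\tilde U_n|^4\le \frac{2}{a^*}\Bigl[\|\nabla U\|_2^2\,m+\|\nabla R_n\|_2^2(1-m)\Bigr]+o(1).
\]
Since $\|\nabla U\|_2^2+\|\nabla R_n\|_2^2\to1$, the bracket equals $1$ only when $m=1$ (using $\|\nabla U\|_2>0$) or in the degenerate case $\|\nabla U\|_2\to0$. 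The latter is impossible for $U\ne0$, because $\|\nabla U\|_2=0$ would force $U=0$. Hence $m=1$, so $\tilde U_n\to U$ in $L^2$. Interpolation then gives convergence in $L^4$, so $\int|U|^4=2/a^*$. Lower semicontinuity and \eqref{eq:vector-GN} give $1\ge\|\nabla U\|_2^2\ge \frac{a^*}{2}\int|U|^4=1$. Therefore $\|\nabla U\|_2=1$, and we obtain strong convergence $\tilde U_n\to U$ in $H^1\times H^1$.

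By strong convergence, $U\in\Ncal_+$ and $\Fcal(U)=0$. Lemma \ref{lem:F-zero} then yields $U=\Phi_{\eps,y}$ for some $\eps,y$. Since $\|\nabla\Phi_{\eps,y}\|_2^2=\eps^{-2}\|\nabla q\|_2^2=\eps^{-2}$ by \eqref{eq:q-properties}, the normalization $\|\nabla U\|_2=1$ forces $\eps=1$, so $U=\Phi(\cdot-y)$. Consequently
\[
\operatorname{dist}_{H^1\times H^1}(U_n,\Zcal)\le\|U_n-\Phi(\cdot-y-y_n)\|_{H^1\times H^1}=\|\tilde U_n-U\|_{H^1\times H^1}\to0,
\]
which contradicts the assumption $\operatorname{dist}\ge\delta$.

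The main obstacle is the dichotomy exclusion. One has to make sure the strict subadditivity really follows from the homogeneity of the Gagliardo--Nirenberg quotient. The key observation is that $\|\nabla U\|_2^2m+\|\nabla R_n\|_2^2(1-m)<1$ whenever $0<m<1$ and both gradient norms are split nontrivially. If $\|\nabla R_n\|_2\to0$ while $m<1$, the bracket equals $m<1$, which again gives a contradiction. So every case other than $m=1$ is excluded.
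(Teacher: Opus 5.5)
Your proposal is correct and follows the paper's proof essentially step for step: non-vanishing from $\int|U_n|^4\to 2/a^*$, translation and weak limit, a Br\'ezis--Lieb splitting with \eqref{eq:vector-GN} applied to both pieces to force $m=1$, strong $H^1\times H^1$ convergence, and then Lemma~\ref{lem:F-zero}. The differences are cosmetic: you argue by contradiction so that passing to subsequences is handled cleanly, you recover $\|\nabla U\|_2=1$ via $L^4$ convergence rather than directly from $k(1-m)+m(1-k)=0$, and you explicitly check that the gradient normalization forces $\eps=1$, which the paper leaves implicit.
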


\begin{proof}
By Lemma \ref{lem:F-decomp}, $D(U_n)\to0\ \text{as}\ n\to\infty$. Since
$\|U_n\|_2^2=\|\nabla U_n\|_2^2=1$, then for $U_n=(u_{1n},u_{2n})$,
\begin{equation}
\int_{\R^2}(u_{1n}^2+u_{2n}^2)^2\dd x
\to
\frac{2}{a^*}\ \text{as}\ n\to\infty.
\label{eq:L4-saturation}
\end{equation}
By the
concentration-compactness principle \cite{Lions1984}, we deduce that
 there exist
$\delta>0$, $R>0$ and $y_n\in\R^2$ such that
\begin{equation}
\int_{B_R(y_n)}|U_n|^2\dd x\ge\delta.
\label{eq:nonvanishing}
\end{equation}
Set $V_n(x):=U_n(x+y_n)$. Passing to a subsequence, there exists $V\in H^1(\R^2)\times H^1(\R^2)$ such that as $n\to\infty$,
$
V_n\rightharpoonup V\ \text{in }H^1(\R^2)\times H^1(\R^2),
$
$
V_n\to V\ \text{in }L^2_{\rm loc}(\R^2)\times L^2_{\rm loc}(\R^2)
\ \text{and a.e. in }\R^2.
$
By \eqref{eq:nonvanishing}, $V\not\equiv0$.

Let $W_n:=V_n-V$ and set
$
m:=\|V\|_2^2>0,\  k:=\|\nabla V\|_2^2>0,
$
then we know that
\begin{equation*}
\|V_n\|_2^2
=
m+\|W_n\|_2^2+o(1)\ \text{and}\
\|\nabla V_n\|_2^2
=
k+\|\nabla W_n\|_2^2+o(1),
\end{equation*}
where $o(1)$ denotes $o(1)\to0$ as $n\to\infty$.
This indicates that
\begin{equation}
\|W_n\|_2^2\to1-m
\ \text{and}\
\|\nabla W_n\|_2^2\to1-k\ \text{as}\ n\to\infty.
\label{eq:Hsplit}
\end{equation}
Moreover, the Br\'ezis--Lieb lemma \cite{BrezisLieb1983} gives
\begin{equation}
\int_{\R^2}|V_n|^4 \dd x
=
\int_{\R^2}|V|^4 \dd x
+
\int_{\R^2}|W_n|^4 \dd x
+o(1),
\label{eq:BL}
\end{equation}
where $|V|^2=v_1^2+v_2^2$.
Applying the Gagliardo--Nirenberg inequality \eqref{eq:vector-GN} to $V$ and $W_n$,
\[
\frac{a^*}{2}\int_{\R^2}|V|^4 \dd x\le km
\]
and
\[
\limsup_{n\to\infty}
\frac{a^*}{2}\int_{\R^2}|W_n|^4 \dd x
\le
(1-k)(1-m).
\]
In view of \eqref{eq:L4-saturation} and \eqref{eq:BL}, we conclude that
\begin{equation}\label{eq:no-dichotomy}
1\le
km+(1-k)(1-m)
=1-\bigl[k(1-m)+m(1-k)\bigr]\le1,
\end{equation}
which implies that $m=k=1$.
It follows from \eqref{eq:Hsplit} that
\[
V_n\to V
\ \text{in }H^1(\R^2)\times H^1(\R^2)\ \text{as}\ n\to\infty.
\]
Hence $\Fcal(V)=0$.
By Lemma \ref{lem:F-zero}, we have $V=\Phi(\cdot-y_0)$ for some $y_0\in\mathbb R^2$. This completes the proof.
\end{proof}

By Lemma \ref{lem:qual-compact}, it suffices to establish the desired quadratic estimate \eqref{eqn:quadratic-estimate} near \(\mathcal Z\).
To this end, we
first prove the following translation orthogonality.
For $\Phi(x)=(\phi_1(x),\phi_2(x))$,
define
$$
\pa_j\Phi(x):=\frac{\pa\Phi(x)}{\pa{x_j}}
=(\pa_j\phi_1(x),\pa_j\phi_2(x)).
$$
\begin{lemma}
\label{lem:centering}
For every $\Phi(x-z)\in \Zcal$ with $z\in\mathbb R^2$, there exist uniform constants $\rho_0>0$ and $C>0$ such that if
\[
\|U-\Phi(\cdot-z)\|_{H^1\times H^1}<\rho_0,
\]
then there exists a unique $y=y(U)\in\R^2$ with $|y-z|<C\rho_0$ such that
\begin{equation}
\left\langle U-\Phi(\cdot-y),\pa_j\Phi(\cdot-y)\right\rangle_{L^2\times L^2}
=0,\  j=1,2.
\label{eq:centering-orig}
\end{equation}
Equivalently,
\begin{equation}
\langle \widehat U-\Phi,\pa_j\Phi\rangle_{L^2\times L^2}
=0,\  j=1,2,
\label{eq:centering}
\end{equation}
where $\widehat U(x):=U(x+y)$.
\end{lemma}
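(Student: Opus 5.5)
This is the standard modulation (orthogonality) lemma, and I would prove it with the implicit function theorem applied to a two-dimensional map. By translation invariance of the $H^1\times H^1$ norm and of the $L^2\times L^2$ pairing, I would first reduce to $z=0$. Indeed, if $\|U-\Phi(\cdot-z)\|<\rho_0$, then $\widetilde U:=U(\cdot+z)$ satisfies $\|\widetilde U-\Phi\|<\rho_0$. Solutions $y$ for $U$ correspond to solutions $y-z$ for $\widetilde U$. This is also why the constants are uniform in $z$.

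Define $G:(H^1\times H^1)\times\R^2\to\R^2$ by
\[
G_j(U,y):=\left\langle U-\Phi(\cdot-y),\pa_j\Phi(\cdot-y)\right\rangle_{L^2\times L^2}
=\langle U(\cdot+y)-\Phi,\pa_j\Phi\rangle_{L^2\times L^2},\quad j=1,2.
\]
The first step is regularity. Since $q$ is smooth and its derivatives decay exponentially (by \eqref{eqn:Q-decay}, elliptic regularity and Lemma \ref{lem:q-tail}), the maps $y\mapsto\Phi(\cdot-y)$ and $y\mapsto\pa_j\Phi(\cdot-y)$ are $C^1$ into $L^2\times L^2$. Writing $G_j(U,y)=\langle U,\pa_j\Phi(\cdot-y)\rangle-\langle\Phi,\pa_j\Phi\rangle$, where the last term vanishes, shows that $G$ is $C^1$ jointly, affine in $U$, and satisfies $G(\Phi,0)=0$.

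The second step computes the nondegeneracy:
\[
\pa_{y_k}G_j(U,y)=-\langle U,\pa_{jk}\Phi(\cdot-y)\rangle.
\]
At $(\Phi,0)$ this equals $-\langle\Phi,\pa_{jk}\Phi\rangle=\langle\pa_j\Phi,\pa_k\Phi\rangle=\delta_{jk}\frac12\|\nabla q\|_2^2=\frac12\delta_{jk}$, using $\gamma+(1-\gamma)=1$, the radial symmetry of $q$ (off-diagonal terms vanish), and \eqref{eq:q-properties}. So the $y$-Jacobian is $\frac12 I$, which is invertible.

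The third step is to get quantitative uniform neighbourhoods, which is the main technical point. The implicit function theorem gives a ball but not explicit radii, so I would run the contraction directly. Consider $T_U(y):=y+2G(U,y)$. Then
\[
|D_yT_U(y)-0|\le 2\bigl|\langle U-\Phi,\pa_{jk}\Phi(\cdot-y)\rangle\bigr|+2\bigl|\langle\Phi,\pa_{jk}\Phi(\cdot-y)-\pa_{jk}\Phi\rangle\bigr|\le C(\rho_0+|y|),
\]
using $\|\pa_{jk}\phi_i\|_2<\infty$ and the Lipschitz continuity of translations on $H^2$ functions. Moreover $|T_U(0)|=2|G(U,0)|\le C\|U-\Phi\|_2<C\rho_0$. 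Choosing $r=C_1\rho_0$ with $\rho_0$ small makes $T_U$ a contraction of $\overline{B_r(0)}$ into itself. Its unique fixed point $y$ satisfies $G(U,y)=0$ and $|y|\le C\rho_0$, which gives existence and uniqueness within that ball. The equivalent form \eqref{eq:centering} follows by the change of variables $x\mapsto x+y$.
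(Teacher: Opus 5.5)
Your proof is correct and follows the paper's route: the same map $G$, the same nondegenerate Jacobian $\langle\pa_k\Phi,\pa_j\Phi\rangle_{L^2\times L^2}$ (diagonal by radial symmetry), and uniformity in $z$ via translation invariance. Your explicit contraction replaces the paper's appeal to the implicit function theorem, and in doing so it actually justifies the quantitative bound $|y-z|\le C\rho_0$ that the paper only asserts. One sign slip: since $D_yG(\Phi,0)=\frac12 I$, the map must be $T_U(y)=y-2G(U,y)$, because with your $+$ sign $D_yT_U$ is close to $2I$ rather than $0$; your displayed bound on $D_yT_U$ is in fact exactly the one for $y-2G(U,y)$.
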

\begin{proof}
For any $y=(y_1,y_2)\in\mathbb R^2$, we define $\Phi_y(x):=\Phi(x-y)$.
Consider the map
\[
G:
H^1(\mathbb R^2)\times H^1(\mathbb R^2)
\times \mathbb R^2
\rightarrow \mathbb R^2
\]
defined by
\[
G(U,y)
=
\bigl(G_1(U,y),G_2(U,y)\bigr),
\]
where
\[
G_j(U,y)
=
\left\langle
U-\Phi_y,\partial_j\Phi_y
\right\rangle_{L^2\times L^2},
\qquad j=1,2 .
\]
Since $\Phi$ is smooth and exponentially decaying, the map $G$ is of class
$C^1$.
For any fixed $z\in\mathbb R^2$, we have
\[
G(\Phi_z,z)=0 .
\]
For $j,k=1,2$, differentiating $G_j$ with respect to $y_k$ gives
\[
\partial_{y_k}G_j(U,y)
=
-\left\langle
\partial_{y_k}\Phi_y,\partial_j\Phi_y
\right\rangle_{L^2\times L^2}
+
\left\langle
U-\Phi_y,\partial_{y_k}\partial_j\Phi_y
\right\rangle_{L^2\times L^2} .
\]
Letting $(U,y)=(\Phi_z,z)$,
we obtain
\[
\partial_{y_k}G_j(\Phi_z,z)
=
\left\langle
\partial_k\Phi_z,\partial_j\Phi_z
\right\rangle_{L^2\times L^2}=
\left\langle
\partial_k\Phi,\partial_j\Phi
\right\rangle_{L^2\times L^2}.
\]
Since $q$ is radial, we know that
\[
\langle\pa_1\Phi,\pa_2\Phi\rangle_{L^2\times L^2}=0\ \text{and}\
\|\pa_j\Phi\|_2^2>0.
\]
Then the Jacobian of $G(U,y)$ with respect to $y$ is invertible.
Therefore, the implicit function theorem can be applied at every point
$(\Phi_z,z)$, and there exist
neighborhoods
\(\mathcal U\subset H^1(\R^2)\times H^1(\R^2)\) of \(\Phi_z\), \(\mathcal V\subset\R^2\) of \(z\) and
 a unique
$y=y(U)\in\mathcal V$ such that $G(U,y)=0$ for $U\in\mathcal U$.
Finally, the translation invariance implies that
 the neighborhood size can be chosen uniformly with
respect to $z\in\mathbb R^2$.
Hence, there exist universal constants \(\rho_0>0\) and $C>0$ such that
\eqref{eq:centering-orig} holds if \(
\|U-\Phi(\cdot-z)\|_{H^1\times H^1}<\rho_0.
\)
\end{proof}

Let $\widehat U=U(\cdot+y)$ be the translated function obtained by
Lemma \ref{lem:centering} with $U\in\Ncal_+$ and $\|U-\Phi(\cdot-z)\|_{H^1\times H^1}<\rho_0$.
Let $\widehat U-\Phi=h$, then
\begin{align}
\|h\|_{H^1\times H^1}&\leq \| U(\cdot+y)-\Phi(\cdot+y-z)\|_{H^1\times H^1}+\|\Phi(\cdot+y-z)-\Phi\|_{H^1\times H^1}\notag\\
&\leq \rho_0+C|y-z|\notag\\
&<C\rho_0.
\label{eqn:h-estimate}
\end{align}
Introduce the orthonormal vectors
\begin{equation*}
e=
(\sqrt{\gamma},\sqrt{1-\gamma})\ \text{and}\
f=
(\sqrt{1-\gamma},-\sqrt{\gamma}).
\end{equation*}
Then $h$ can be uniquely
decomposed as $h=e \eta+f \zeta$ with $\eta,\zeta\in H^1(\mathbb R^2)$.
Then $\|h\|_{H^1\times H^1}^2
=
\|\eta\|_{H^1}^2+\|\zeta\|_{H^1}^2$, and for $\widehat U=(\widehat u_1,\widehat u_2)$,
\begin{align}
\widehat u_1=\sqrt{\gamma}(q+\eta)+\sqrt{1-\gamma} \zeta\ \text{and}\
\widehat u_2=\sqrt{1-\gamma}(q+\eta)-\sqrt{\gamma} \zeta.
\label{eq:u1eta}
\end{align}
Furthermore, by the translation invariance and \eqref{eq:centering}, we infer that
\begin{equation}
2\langle q,\eta\rangle
+\|\eta\|_2^2+\|\zeta\|_2^2=0,
\label{eq:mass-exact}
\end{equation}
\begin{equation}
2\langle\nabla q,\nabla\eta\rangle
+\|\nabla\eta\|_2^2+\|\nabla\zeta\|_2^2=0
\label{eq:grad-exact}
\end{equation}
and
\begin{equation}
\langle\eta,\pa_jq\rangle=0,\  j=1,2.
\label{eq:trans-orth}
\end{equation}

Define
\begin{align}
\Qcal_+(\eta)
&:=
\int_{\R^2}
\left(
|\nabla\eta|^2+\eta^2-3Q^2\eta^2
\right)\dd x,
\nonumber\\
\Qcal_\perp(\zeta)
&:=
\int_{\R^2}
\left(
|\nabla\zeta|^2+\zeta^2-\kappa Q^2\zeta^2
\right)\dd x,
\label{eq:Qperp}
\end{align}
where
\begin{equation*}
\kappa:=1-\frac{2s_1s_2}{a^*}\in(-1,1).
\end{equation*}
Now, we expand $\Fcal(U)$ for $\rho:=\|h\|_{H^1\times H^1}\leq 1$.
\begin{lemma}
\label{lem:local-expansion}
Let $\mathcal R(\eta,\zeta):=\Fcal(\widehat U)-
\Qcal_+(\eta)-\Qcal_\perp(\zeta).$
Then for $\rho\le1$, there holds
\begin{equation}
|\mathcal R(\eta,\zeta)|\le C\rho^3.
\label{eq:remainder}
\end{equation}
\end{lemma}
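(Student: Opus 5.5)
The plan is to expand $\Fcal(\widehat U)$ directly in the variables $(\eta,\zeta)$ using \eqref{eq:u1eta}. The zeroth- and first-order terms are removed with the equation $-\Delta q+q=a^*q^3$ together with the exact constraint identities \eqref{eq:mass-exact}--\eqref{eq:grad-exact}. What is left is exactly $\Qcal_+(\eta)+\Qcal_\perp(\zeta)$ plus cubic and quartic terms. Since $\widehat U$ is a translate of $U\in\Ncal_+$, we have $\|\widehat U\|_2=\|\nabla\widehat U\|_2=1$ and $\Fcal(\widehat U)=\Fcal(U)$. I would use the decomposition of Lemma \ref{lem:F-decomp}:
\[
\Fcal(\widehat U)=\|\nabla \widehat U\|_2^2-\frac{a^*}{2}\int_{\R^2}|\widehat U|^4\dd x+\frac12\int_{\R^2}(s_1\widehat u_1^2-s_2\widehat u_2^2)^2\dd x .
\]
The two pieces are then expanded separately. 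Throughout, write $p:=q+\eta$.

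\emph{The $D$-part.} Since $e,f$ are orthonormal, $|\widehat U|^2=p^2+\zeta^2$ and $|\nabla\widehat U|^2=|\nabla p|^2+|\nabla\zeta|^2$. Hence
\[
\|\nabla\widehat U\|_2^2=1+2\langle\nabla q,\nabla\eta\rangle+\|\nabla\eta\|_2^2+\|\nabla\zeta\|_2^2 ,
\]
and
\[
\int|\widehat U|^4=\int q^4+4\int q^3\eta+6\int q^2\eta^2+2\int q^2\zeta^2+R_1,
\]
where $R_1$ consists of the integrals of $4q\eta^3$, $\eta^4$, $4q\eta\zeta^2$, $2\eta^2\zeta^2$ and $\zeta^4$.

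Testing the equation for $q$ against $\eta$ gives $a^*\int q^3\eta=\langle\nabla q,\nabla\eta\rangle+\langle q,\eta\rangle$. Using also $\|q\|_4^4=2/a^*$ from \eqref{eq:q-properties}, the constant terms cancel and the $\langle\nabla q,\nabla\eta\rangle$ terms cancel. What remains is
\[
\|\nabla\eta\|_2^2+\|\nabla\zeta\|_2^2-2\langle q,\eta\rangle-3\int Q^2\eta^2-\int Q^2\zeta^2-\tfrac{a^*}{2}R_1,
\]
where $a^*q^2=Q^2$. By \eqref{eq:mass-exact}, $-2\langle q,\eta\rangle=\|\eta\|_2^2+\|\zeta\|_2^2$. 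This produces $\Qcal_+(\eta)$ and the ``$1$''-part of $\Qcal_\perp(\zeta)$, with coefficient $-1$ in front of $\int Q^2\zeta^2$.

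\emph{The coupling part.} From \eqref{eq:u1eta},
\[
s_1\widehat u_1^2-s_2\widehat u_2^2=\bigl(s_1\gamma-s_2(1-\gamma)\bigr)p^2+2\sqrt{\gamma(1-\gamma)}(s_1+s_2)\,p\zeta+\bigl(s_1(1-\gamma)-s_2\gamma\bigr)\zeta^2 .
\]
With $\gamma=s_2/(s_1+s_2)$ the $p^2$ coefficient vanishes, and the $p\zeta$ coefficient equals $2\sqrt{s_1s_2}$. Squaring and halving, the only quadratic contribution is $2s_1s_2\int q^2\zeta^2=\frac{2s_1s_2}{a^*}\int Q^2\zeta^2$. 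All other contributions are of the form $\int q\eta\zeta^2$, $\int\eta^2\zeta^2$, $\int q\zeta^3$, $\int\eta\zeta^3$ and $\int\zeta^4$. Adding this to the $D$-part turns the coefficient $-1$ of $\int Q^2\zeta^2$ into $-(1-2s_1s_2/a^*)=-\kappa$, which gives exactly $\Qcal_\perp(\zeta)$.

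\emph{Remainder.} Every leftover term is a cubic or quartic monomial in $(\eta,\zeta)$, possibly multiplied by a power of $q\in L^\infty$. By Hölder's inequality and the Sobolev embeddings $H^1(\R^2)\hookrightarrow L^3\cap L^4$, each cubic term is bounded by $C\rho^3$ and each quartic term by $C\rho^4\le C\rho^3$ when $\rho\le1$. This yields \eqref{eq:remainder}.

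The orthogonality \eqref{eq:trans-orth} is not needed here; it will only enter when proving coercivity of $\Qcal_+$. The main point needing care is the bookkeeping of the first-order terms: the cancellation uses the Euler--Lagrange equation of $q$ together with both the mass and gradient normalizations. I would double-check that $\|\nabla\widehat U\|_2^2=1$ is used consistently, either directly or through \eqref{eq:grad-exact}, since both forms give the same result.
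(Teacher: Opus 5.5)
Your proposal is correct and follows the paper's route. You expand $D(\widehat U)$ using $|\widehat U|^2=(q+\eta)^2+\zeta^2$ and $-\Delta q+q=a^*q^3$, rewrite $-2\langle q,\eta\rangle$ via \eqref{eq:mass-exact}, reduce the coupling term to $2\sqrt{s_1s_2}(q+\eta)\zeta+(s_1-s_2)\zeta^2$ so that its quadratic part changes the coefficient of $\int Q^2\zeta^2$ from $1$ to $\kappa$, and bound the cubic and quartic leftovers by Sobolev embeddings. As your own computation shows, the $\langle\nabla q,\nabla\eta\rangle$ terms cancel through the equation for $q$ alone, so only the mass identity is needed, not \eqref{eq:grad-exact}.
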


\begin{proof}
Note that
$
\widehat u_1^2+\widehat u_2^2=(q+\eta)^2+\zeta^2
$
and $-\Delta q+q=a^*q^3$, then we obtain
\begin{align*}
D(\widehat U)
&=
-2\int_{\R^2}q\eta \dd x
+\int_{\R^2}\left(|\nabla\eta|^2-3Q^2\eta^2\right)\dd x
+\int_{\R^2}\left(|\nabla\zeta|^2-Q^2\zeta^2\right)\dd x
\nonumber\\
&\ \ \ -2a^*\int_{\R^2}q\eta(\eta^2+\zeta^2) \dd x
-\frac{a^*}{2}\int_{\R^2}(\eta^2+\zeta^2)^2 \dd x.
\end{align*}
From \eqref{eq:u1eta} and \eqref{eq:mass-exact},
\begin{equation*}
s_1\widehat u_1^2-s_2\widehat u_2^2
=
2\sqrt{s_1s_2} (q+\eta)\zeta+(s_1-s_2)\zeta^2
\end{equation*}
and
\[
-2\int_{\R^2}q\eta \dd x=\|\eta\|_2^2+\|\zeta\|_2^2.
\]
Consequently,
\begin{align*}
\frac12\int_{\R^2}(s_1\widehat u_1^2-s_2\widehat u_2^2)^2 \dd x
&=
2s_1s_2\int_{\R^2}(q+\eta)^2\zeta^2 \dd x
\nonumber+
2\sqrt{s_1s_2}(s_1-s_2)\int_{\R^2}(q+\eta)\zeta^3 \dd x
\nonumber\\
&\ \ \ +
\frac{(s_1-s_2)^2}{2}\int_{\R^2}\zeta^4 \dd x.
\end{align*}
Recall that $Q^2=a^*q^2$, then
\[
\int_{\R^2}\left(|\nabla\zeta|^2+\zeta^2-Q^2\zeta^2\right) \dd x
+
2s_1s_2\int_{\R^2}q^2\zeta^2 \dd x
=
\Qcal_\perp(\zeta).
\]
Thus, the direct calculation indicates that all terms of $\mathcal R(\eta,\zeta)$ are cubic or quartic.
Since $q\in L^\infty(\mathbb R^2)$ and $H^1(\R^2)\hookrightarrow L^p(\R^2)$ for every
$2\le p<\infty$, then \eqref{eq:remainder} holds for $\rho\le1$.
\end{proof}

Next, we establish the coercivity of $\Qcal_\perp(\zeta)$ and $\Qcal_+(\eta)$, respectively.
\begin{lemma}
\label{lem:Qperp-coercive}
There exists $c_\perp>0$ such that
\begin{equation}\label{eqn:Qperp-coercive}
\Qcal_\perp(\zeta)
\ge
c_\perp\|\zeta\|_{H^1}^2,\
\forall \zeta\in H^1(\R^2).
\end{equation}
\end{lemma}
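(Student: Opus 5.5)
The plan is to compare $\Qcal_\perp$ with the scalar linearized operator
\[
L_-:=-\Delta+1-Q^2 ,
\]
which is nonnegative because $Q$ is a positive element of its kernel. By \eqref{eqn:Q-equation}, $L_-Q=0$. Since $Q>0$, $0$ is the bottom of the spectrum of $L_-$: a positive eigenfunction must be the ground state. The essential spectrum of $L_-$ is $[1,\infty)$ because $Q^2$ decays exponentially by \eqref{eqn:Q-decay}.

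To avoid spectral-theory citations, I would instead use the ground-state substitution. For $\zeta\in C_c^\infty(\R^2)$ write $\zeta=Qw$, which is legitimate since $Q>0$ is smooth. Integrating by parts and using $-\Delta Q+Q=Q^3$ gives
\[
\int_{\R^2}\bigl(|\nabla\zeta|^2+\zeta^2-Q^2\zeta^2\bigr)\dd x=\int_{\R^2}Q^2|\nabla w|^2\dd x\ge0 .
\]
By density, $\langle L_-\zeta,\zeta\rangle\ge0$ for all $\zeta\in H^1(\R^2)$. Both sides are continuous in $H^1$ because $Q\in L^\infty$.

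Next I would split according to the sign of $\kappa$.
\begin{itemize}
\item If $\kappa\le0$, then $-\kappa Q^2\zeta^2\ge0$, so $\Qcal_\perp(\zeta)\ge\|\zeta\|_{H^1}^2$ directly.
\item If $\kappa\in(0,1)$, write the convex combination
\[
\Qcal_\perp(\zeta)=\kappa\,\langle L_-\zeta,\zeta\rangle+(1-\kappa)\|\zeta\|_{H^1}^2\ge(1-\kappa)\|\zeta\|_{H^1}^2 .
\]
\end{itemize}
Thus \eqref{eqn:Qperp-coercive} holds with $c_\perp:=\min\{1,1-\kappa\}>0$. This constant depends only on $a_1,a_2$, since $\kappa<1$ holds exactly because $s_1s_2>0$.

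The only step needing care is the nonnegativity of $L_-$ on all of $H^1$. This amounts to justifying the integration by parts in the substitution $\zeta=Qw$, which is routine for compactly supported $\zeta$, followed by the density argument.
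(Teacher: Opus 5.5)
Your proposal is correct and follows essentially the paper's route. The paper also proves $\langle L_-\zeta,\zeta\rangle\ge0$ by the substitution $\zeta=Qw$ plus density, and then combines $\Qcal_\perp(\zeta)=\langle L_-\zeta,\zeta\rangle+(1-\kappa)\int_{\R^2}Q^2\zeta^2\,\dd x$ with $\Qcal_\perp(\zeta)=\|\zeta\|_{H^1}^2-\kappa\int_{\R^2}Q^2\zeta^2\,\dd x$. The only difference is that the paper closes by contradiction, whereas your convex-combination identity makes the same mechanism quantitative and gives the explicit constant $c_\perp=\min\{1,1-\kappa\}$.
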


\begin{proof}
It suffices first to consider $\zeta\in C_c^\infty(\R^2)$. Let $L_-:=-\Delta+1-Q^2$.
Since $Q>0$, writing $\zeta=Qw$ and using $L_-Q=0$, an integration
by parts gives
\[
\langle L_-\zeta,\zeta\rangle
=
\int_{\R^2}Q^2
\left|\nabla\left(\frac{\zeta}{Q}\right)\right|^2\,\dd x
\ge0.
\]
By density of $C_c^\infty(\R^2)$ in $H^1(\R^2)$ and continuity of
the quadratic form of $L_-$, it follows that
\begin{equation}\label{eqn:L-negative-positive}
\langle L_-\zeta,\zeta\rangle\ge0,\
\zeta\in H^1(\R^2),
\end{equation}
and
\[
\Qcal_\perp(\zeta)
=
\langle L_-\zeta,\zeta\rangle
+(1-\kappa)\int_{\R^2} Q^2\zeta^2 \dd x\geq0
\]
due to $\kappa\in(-1,1)$.
Suppose that \eqref{eqn:Qperp-coercive} does not hold, then there exist $\{\zeta_n\}$ with
$\|\zeta_n\|_{H^1}=1$ and $\Qcal_\perp(\zeta_n)\to0$ as $n\to\infty$.
Then $\int_{\R^2} Q^2\zeta_n^2 \dd x\to0$ as $n\to\infty$. However, it follows from \eqref{eq:Qperp} that
\[
\Qcal_\perp(\zeta_n)=1-\kappa\int_{\R^2} Q^2\zeta_n^2 \dd x\to1\ \text{as}\ n\to\infty,
\]
a contradiction.
\end{proof}

Let
\begin{equation*}
L_+:=-\Delta+1-3Q^2.
\end{equation*}
By \cite{Weinstein1983,Kwong1989}, we know that
\begin{equation}
\ker L_+
=
\operatorname{span}\{\pa_1q,\pa_2q\}.
\label{eq:Lplus-kernel}
\end{equation}
This is the standard nondegeneracy of $Q$.
Define the tangent space
\[
T_\eta:=\{\eta\in H^1(\R^2):\langle q,\eta\rangle=0\}
\]
and the zero set of $\Qcal_+$ on $T_\eta$ by
$$
\mathcal K_+
:=
\{\eta\in T_\eta:\Qcal_+(\eta)=0\}.
$$
\begin{lemma}
\label{lem:Qplus-nonnegative}
For every $\eta\in T_\eta$,
\begin{equation}
\Qcal_+(\eta)\ge0
\label{eq:Qplus-nonnegative}
\end{equation}
and
\begin{equation}
\mathcal K_+
=
\operatorname{span}\{\Lambda q,\pa_1q,\pa_2q\},
\
\Lambda q:=q+x\cdot\nabla q.
\label{eq:kernel-Tm}
\end{equation}
\end{lemma}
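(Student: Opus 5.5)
We prove both claims with the spectral theory of $L_+=-\Delta+1-3Q^2$ and the identity for the scaling generator $\Lambda q=q+x\cdot\nabla q$. Since $Q^2=a^*q^2$, we have $\Qcal_+(\eta)=\langle L_+\eta,\eta\rangle$.

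\emph{Step 1: facts about $\Lambda q$.} Differentiate $q_\lambda(x):=\lambda q(\lambda x)$ at $\lambda=1$. Since $-\Delta q_\lambda+\lambda^2q_\lambda=a^*q_\lambda^3$, this gives
\[
L_+\Lambda q=-2q .
\]
Integrating by parts in $\R^2$ gives $\int_{\R^2}q\,x\cdot\nabla q\,\dd x=-\int_{\R^2}q^2\,\dd x$, hence $\langle q,\Lambda q\rangle=0$ and $\Lambda q\in T_\eta$. It follows that $\Qcal_+(\Lambda q)=\langle -2q,\Lambda q\rangle=0$. By \eqref{eq:Lplus-kernel}, $\Qcal_+(\pa_jq)=0$, and $\langle q,\pa_jq\rangle=0$ because $\pa_jq$ is odd. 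Hence $\operatorname{span}\{\Lambda q,\pa_1q,\pa_2q\}\subset\mathcal K_+$. A direct check shows that any $k$ in this span satisfies $\Qcal_+(k)=0$: the cross terms are $\langle L_+\Lambda q,\pa_jq\rangle=-2\langle q,\pa_jq\rangle=0$.

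\emph{Step 2: nonnegativity.} We use the standard spectral facts (Weinstein, Kwong). The essential spectrum of $L_+$ is $[1,\infty)$. Below it, $L_+$ has exactly one negative eigenvalue, since $\langle L_+Q,Q\rangle=-2\int_{\R^2} Q^4\,\dd x<0$ and $Q$ is a ground state. The eigenvalue $0$ has eigenspace $\operatorname{span}\{\pa_1q,\pa_2q\}$ by \eqref{eq:Lplus-kernel}, so the fourth min--max value $\lambda_4$ of $L_+$ is strictly positive.

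Suppose some $\eta\in T_\eta$ satisfies $\Qcal_+(\eta)<0$, and let $W:=\operatorname{span}\{\eta,\Lambda q,\pa_1q,\pa_2q\}$. For $w=c\eta+k$ with $k\in\operatorname{span}\{\Lambda q,\pa_1q,\pa_2q\}$, we have $\langle L_+\eta,\Lambda q\rangle=\langle\eta,-2q\rangle=0$ and $L_+\pa_jq=0$. Therefore
\[
\Qcal_+(w)=c^2\Qcal_+(\eta)+\Qcal_+(k)=c^2\Qcal_+(\eta)\le0 .
\]
We check that $\dim W=4$. First, $\Lambda q$ is radial and nonzero while the $\pa_jq$ are odd and linearly independent, so these three functions are independent. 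Second, $\eta$ is not in their span, since every element of that span has $\Qcal_+=0$ while $\Qcal_+(\eta)<0$. Min--max then gives $\lambda_4\le\max_{w\in W,\ \|w\|_2=1}\Qcal_+(w)\le0$, a contradiction. This proves \eqref{eq:Qplus-nonnegative}.

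\emph{Step 3: the zero set.} Let $\eta\in\mathcal K_+$. By Step 2, $\eta$ minimizes the nonnegative form $\Qcal_+$ on the closed subspace $T_\eta$. The first variation therefore vanishes on $T_\eta$: $\langle L_+\eta,\varphi\rangle=0$ for all $\varphi\in T_\eta$. This forces $L_+\eta=\lambda q$ weakly for some $\lambda\in\R$. By Step 1, $L_+\bigl(\eta+\tfrac{\lambda}{2}\Lambda q\bigr)=0$, so \eqref{eq:Lplus-kernel} yields $\eta\in\operatorname{span}\{\Lambda q,\pa_1q,\pa_2q\}$. Together with Step 1 this proves \eqref{eq:kernel-Tm}.

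The main obstacle is Step 2. It relies on the cited fact that $L_+$ has exactly one negative eigenvalue, and on the dimension count. The key observation there is the orthogonality $\langle L_+\eta,\Lambda q\rangle=-2\langle\eta,q\rangle=0$; it is the mass-critical identity $\langle q,\Lambda q\rangle=0$ in dimension two that makes this Weinstein-type argument close exactly, with no sign condition left to verify.
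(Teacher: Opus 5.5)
Your proof is correct. Your Step 3 (first variation on $T_\eta$, $L_+\eta=cq$, $L_+\Lambda q=-2q$, then nondegeneracy) is the same as the paper's argument for the zero set. The real difference is in Step 2.

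The paper proves $\Qcal_+\ge0$ on $T_\eta$ with no spectral input. It takes the $L^2$-normalized path $f_t=(q+t\eta)/\|q+t\eta\|_2$. The sharp scalar Gagliardo--Nirenberg inequality, with optimizer $q$, gives $\overline D(f_t)=\|\nabla f_t\|_2^2-\frac{a^*}{2}\|f_t\|_4^4\ge0$ and $\overline D(q)=0$. It then expands $\overline D(f_t)$ to second order using $-\Delta q+q=a^*q^3$. The constraint $\langle q,\eta\rangle=0$ removes the linear term, and the normalization supplies the $\|\eta\|_2^2$ part of $\Qcal_+$. This yields $\overline D(f_t)=t^2\Qcal_+(\eta)+o(t^2)\ge0$.

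You instead cite the Morse-index-one property of $Q$. You then run a Weinstein-type min--max count on the four-dimensional space $\operatorname{span}\{\eta,\Lambda q,\pa_1q,\pa_2q\}$, where the key point is $\langle L_+\eta,\Lambda q\rangle=-2\langle\eta,q\rangle=0$.

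\emph{What the paper's route buys.} It is more self-contained, using only the inequality already stated in the preliminaries. In fact it \emph{proves} the spectral fact you cite: nonnegativity of $\Qcal_+$ on the codimension-one subspace $T_\eta$ forces $L_+$ to have at most one negative eigenvalue.

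\emph{What your route buys, and what it costs.} It needs one extra external citation, and your phrase ``$Q$ is a ground state'' carries real weight there. It must mean the Nehari/mountain-pass characterization that gives Morse index one; the computation $\langle L_+Q,Q\rangle<0$ alone only shows there is at least one negative eigenvalue. In exchange, it shows the structural reason the single constraint suffices: the mass-critical identity $\langle q,\Lambda q\rangle=0$. It also carries over to settings where spectral information is available but no sharp inequality with an explicit optimizer is.
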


\begin{proof}
Fix $\eta\in T_\eta$ and define the normalized path
\begin{equation*}
f_t:=\frac{q+t\eta}{\|q+t\eta\|_2}\ \text{with}\ t\in\mathbb R.
\end{equation*}
Since $\langle q,\eta\rangle=0$,
\[
\|q+t\eta\|_2=(1+t^2\|\eta\|_2^2)^{1/2}.
\]
Applying Taylor's expansion for small $|t|$,
\begin{equation}
f_t
=
q+t\eta-\frac{t^2}{2}\|\eta\|_2^2q+g_t
\label{eq:ft-expansion}
\end{equation}
where $\|g_t\|_{H^1}=O(t^3)$.
Note that
$\|f_t\|_2=1$,
then
the
Gagliardo--Nirenberg inequality \eqref{eqn:GN} gives
\begin{equation*}
\overline D(f_t)
:=
\int_{\R^2}|\nabla f_t|^2 \dd x-\frac{a^*}{2}\int_{\R^2} f_t^4 \dd x
\ge0
\end{equation*}
and $\overline D(q)=0$.
In view of \eqref{eq:q-properties}, we know that for $\varphi\in H^1(\mathbb R^2)$ with $\|\varphi\|_{H^1}$ sufficiently small,
\begin{align}
\overline D(q+\varphi)
={}&
-2\langle q,\varphi\rangle
+\int_{\R^2}\left(|\nabla \varphi|^2-3Q^2\varphi^2\right) \dd x
+O(\|\varphi\|_{H^1}^3).
\label{eq:Ds-local}
\end{align}
Substituting \eqref{eq:ft-expansion} into \eqref{eq:Ds-local} with $\varphi=t\eta-\frac{t^2}{2}\|\eta\|_2^2q+g_t$, we infer that as $t\to0$,
\[
\overline D(f_t)
=
t^2
\left[
\|\nabla\eta\|_2^2+\|\eta\|_2^2
-3\int_{\R^2}Q^2\eta^2 \dd x
\right]
+o(t^2)
=
t^2\Qcal_+(\eta)+o(t^2)\geq0,
\]
which means \eqref{eq:Qplus-nonnegative} holds.

Let $\eta\in T_\eta$ and $\Qcal_+(\eta)=0$.
Since $\Qcal_+$ is nonnegative on $T_\eta$, for any $\xi\in T_\eta$, the quadratic
polynomial $k(t):=\Qcal_+(\eta+t\xi)$ has a minimum at $t=0$. Hence $k'(0)=0$, that is,
\[
\int_{\R^2}\left(
\nabla\eta\cdot\nabla\xi+\eta\xi-3Q^2\eta\xi
\right) \dd x=\langle L_+\eta,\xi\rangle=0,
\ \forall \xi\in T_\eta.
\]
Thus, $L_+\eta=cq$
for some constant $c\in\mathbb R$.
Let $q_\lambda(x)=\lambda q(\lambda x)$.
Differentiating at $\lambda=1$ gives
\[
\left.\frac{\dd}{\dd\lambda}q_\lambda\right|_{\lambda=1}
=
q+x\cdot\nabla q
=
\Lambda q.
\]
Note that $L_+(\Lambda q)=-2q$, then
\[
L_+\left(\eta+\frac{c}{2}\Lambda q\right)=0.
\]
By \eqref{eq:Lplus-kernel}, there exist $b_1,b_2\in\mathbb R$ such that
\[
\eta
=
-\frac{c}{2}\Lambda q+b_1\pa_1q+b_2\pa_2q.
\]
Conversely, it is easy to check that $\Lambda q$ and $\pa_jq$ belong to $T_\eta$, and
$\Qcal_+(\Lambda q)=\Qcal_+(\pa_jq)=0$, $j=1,2$. This proves \eqref{eq:kernel-Tm}.
\end{proof}

Define
\begin{equation}
T:=
\left\{
\eta\in H^1(\R^2):
\langle q,\eta\rangle=0,\
\langle\nabla q,\nabla\eta\rangle=0,\
\langle\eta,\pa_jq\rangle=0,\ j=1,2
\right\}.
\label{eq:T}
\end{equation}

\begin{lemma}
\label{lem:Qplus-coercive}
There exists $c_+>0$ such that
\begin{equation}
\Qcal_+(\eta)\ge c_+\|\eta\|_{H^1}^2,
\ \forall \eta\in T.
\label{eq:Qplus-coercive}
\end{equation}
\end{lemma}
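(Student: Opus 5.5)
The plan is to argue by contradiction, combining the nonnegativity and kernel characterization from Lemma \ref{lem:Qplus-nonnegative} with a compactness argument for the potential term $\int Q^2\eta^2$. The first step is purely algebraic: show that $\mathcal K_+\cap T=\{0\}$. Since $T\subset T_\eta$, Lemma \ref{lem:Qplus-nonnegative} gives $\Qcal_+\ge0$ on $T$. It also says that any $\eta\in T$ with $\Qcal_+(\eta)=0$ has the form
\[
\eta=\alpha\Lambda q+b_1\pa_1q+b_2\pa_2q .
\]
I then test this against the additional constraints defining $T$.

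For the gradient constraint, I compute $\langle\nabla q,\nabla\Lambda q\rangle$. Differentiating $\frac12\|\nabla q_\lambda\|_2^2=\frac{\lambda^2}{2}\|\nabla q\|_2^2$ at $\lambda=1$ gives
\[
\langle\nabla q,\nabla\Lambda q\rangle=\|\nabla q\|_2^2=1.
\]
By parity, since $q$ is radial and $\pa_jq$ is odd in $x_j$, we have $\langle\nabla q,\nabla\pa_jq\rangle=0$. Hence the constraint $\langle\nabla q,\nabla\eta\rangle=0$ forces $\alpha=0$.

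Next, again by parity, $\langle\pa_1q,\pa_2q\rangle=0$. The orthogonality conditions $\langle\eta,\pa_jq\rangle=0$ then reduce to $b_j\|\pa_jq\|_2^2=0$, so $b_1=b_2=0$. This proves $\mathcal K_+\cap T=\{0\}$.

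Now suppose \eqref{eq:Qplus-coercive} fails. Then there are $\eta_n\in T$ with $\|\eta_n\|_{H^1}=1$ and $\Qcal_+(\eta_n)\to0$. Up to a subsequence, $\eta_n\rightharpoonup\eta$ in $H^1(\R^2)$. The set $T$ is weakly closed, because each of its four defining conditions is a continuous linear functional on $H^1$ (using $q,\pa_jq\in L^2$ and $\nabla q\in L^2$). Hence $\eta\in T$.

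The key compactness fact is
\[
\int_{\R^2}Q^2\eta_n^2\,\dd x\to\int_{\R^2}Q^2\eta^2\,\dd x .
\]
This follows from Rellich compactness on balls $B_R$ together with the exponential decay of $Q$ from \eqref{eqn:Q-decay}. On $\{|x|>R\}$ the contribution is at most $\sup_{|x|>R}Q^2\cdot\|\eta_n\|_2^2\to0$ as $R\to\infty$, uniformly in $n$.

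Since $\Qcal_+(\eta_n)=1-3\int Q^2\eta_n^2$, passing to the limit gives $\int Q^2\eta^2=\frac13$, so $\eta\neq0$. On the other hand, weak lower semicontinuity of $\|\nabla\cdot\|_2^2+\|\cdot\|_2^2$, together with convergence of the potential term, yields
\[
\Qcal_+(\eta)\le\liminf_{n\to\infty}\Qcal_+(\eta_n)=0 .
\]
Combined with $\Qcal_+\ge0$ on $T$, this gives $\Qcal_+(\eta)=0$, so $\eta\in\mathcal K_+\cap T=\{0\}$. This contradicts $\eta\neq0$, and proves the lemma.

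The main obstacle is the first step, namely confirming that the added constraint $\langle\nabla q,\nabla\eta\rangle=0$ genuinely excludes the dilation direction $\Lambda q$. This hinges on the scaling computation giving $\langle\nabla q,\nabla\Lambda q\rangle=1\neq0$. The remaining steps, the parity cancellations and the localization of the $Q^2$ term, are routine.
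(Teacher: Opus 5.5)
Your proposal is correct and follows essentially the same argument as the paper. The paper also takes a normalized sequence in $T$ with $\Qcal_+(\eta_n)\to0$ and passes to a weak limit $\eta\neq0$ with $\Qcal_+(\eta)=0$. It then uses the kernel characterization of Lemma \ref{lem:Qplus-nonnegative} together with $\langle\nabla q,\nabla\Lambda q\rangle=1$, $\langle\nabla q,\nabla\pa_jq\rangle=0$ and $\langle\pa_1q,\pa_2q\rangle=0$ to force $\eta=0$. The only difference is that you make explicit the weak closedness of $T$ and the convergence of $\int_{\R^2} Q^2\eta_n^2\,\dd x$, which the paper uses without comment.
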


\begin{proof}
Assume by contradiction that there exist $\eta_n\in T$ such that
\[
\|\eta_n\|_{H^1}=1\ \text{and}\
\Qcal_+(\eta_n)\to0\ \text{as}\ n\to\infty.
\]
Since
\[
\Qcal_+(\eta_n)
=
1-3\int_{\R^2}Q^2\eta_n^2 \dd x,
\]
we obtain
\begin{equation}
\int_{\R^2}Q^2\eta_n^2 \dd x\to\frac13\ \text{as}\ n\to\infty.
\label{eq:weighted-limit}
\end{equation}
Passing to a subsequence, there exists $\eta\in H^1(\mathbb R^2)$ such that
\[
\eta_n\rightharpoonup\eta
\ \text{in }H^1(\mathbb R^2)\ \text{as}\ n\to\infty.
\]
By \eqref{eq:weighted-limit},
\[
\int_{\R^2}Q^2\eta^2 \dd x=\frac13,
\]
then $\eta\ne0$.
Meanwhile, we know that $\eta\in T$.
Furthermore,
\[
\Qcal_+(\eta)
=
\|\eta\|_{H^1}^2-3\int_{\R^2}Q^2\eta^2 \dd x
=
\|\eta\|_{H^1}^2-1
\le0.
\]
According to Lemma \ref{lem:Qplus-nonnegative}, we have $\Qcal_+(\eta)\ge0$,  and hence
$\Qcal_+(\eta)=0\ \text{and}\ \|\eta\|_{H^1}=1.$
From \eqref{eq:kernel-Tm}, there are constants $A,B_1,B_2\in\mathbb R$ such that
\[
\eta=A\Lambda q+B_1\pa_1q+B_2\pa_2q.
\]
Recall that $q$ is radial, $\langle\nabla q,\nabla\pa_jq\rangle=0,\  j=1,2.$
On the other hand, we know that
\begin{equation}
\langle q,\Lambda q\rangle=0
\ \text{and }
\langle\nabla q,\nabla\Lambda q\rangle=1,
\label{eq:Lambda-identities}
\end{equation}
where the second identity can be obtained by differentiating
$\|\nabla q_\lambda\|_2^2=\lambda^2$ at $\lambda=1$.
Thus, the gradient condition in $T$ implies that $A=0$.
Finally, since
\[
\langle\pa_1q,\pa_2q\rangle=0\
\mbox{and}\
\|\pa_jq\|_2>0,
\]
we have $B_1=B_2=0$ and then $\eta=0$, a contradiction. This completes the proof.
\end{proof}

\begin{proof}[Proof of Proposition {\rm\ref{prop:stability}}]
 Let $\operatorname{dist}_{H^1\times H^1}(U,\Zcal)<\rho_0$ with $U\in \mathcal N_+$ and $\rho_0$ sufficiently small, then
by the definition of infimum, there is $z\in\mathbb R^2$ such that
$\|U-\Phi(\cdot-z)\|_{H^1\times H^1}<\rho_0.$
Let $\widehat U$ be the translated function obtained by
Lemma \ref{lem:centering}, then \eqref{eq:centering} holds. Moreover, by the argument before Lemma
\ref{lem:local-expansion}, we can write
\[
\widehat U-\Phi=e\eta+f\zeta\ \text{and}\
\rho^2=\|\eta\|_{H^1}^2+\|\zeta\|_{H^1}^2.
\]
From \eqref{eq:mass-exact} and \eqref{eq:grad-exact}, we have
\begin{equation*}
\langle q,\eta\rangle=O(\rho^2)
\ \text{and}\
\langle\nabla q,\nabla\eta\rangle=O(\rho^2).
\end{equation*}
Define
\begin{equation}
\widetilde\eta
:=
\eta-\langle q,\eta\rangle q-(\langle\nabla q,\nabla\eta\rangle-\langle q,\eta\rangle)\Lambda q.
\label{eq:eta-tilde}
\end{equation}
Using \eqref{eq:Lambda-identities}, $
\langle q,\widetilde\eta\rangle=0 \ \text{and}\ \langle\nabla q,\nabla\widetilde\eta\rangle=0.$
Since $q$ and $\Lambda q$ are radial, then we can infer from \eqref{eq:trans-orth} that $\langle\widetilde\eta,\pa_jq\rangle=0.$
Thus, $\widetilde\eta\in T$.
Moreover, there is $C>0$ such that
\begin{equation}
\|\eta-\widetilde\eta\|_{H^1}\le C\rho^2.
\label{eq:eta-tilde-error}
\end{equation}
Write $\eta=\widetilde\eta+\nu$ with $\nu:=\eta-\widetilde\eta$ and $\|\nu\|_{H^1}\le C\rho^2$.
Note that
\[
\Qcal_+(\eta)
=\Qcal_+(\widetilde\eta)+2B(\widetilde\eta,\nu)+\Qcal_+(\nu),
\]
where
\[
B(\widetilde\eta,\nu)
:=
\int_{\R^2}
\left(
\nabla\widetilde\eta\nabla \nu
+\widetilde\eta \nu
-3Q^2\widetilde\eta \nu
\right)\dd x.
\]
Obviously, there is $C>0$ such that
$|B(\widetilde\eta,\nu)|\le C\|\widetilde\eta\|_{H^1}\|\nu\|_{H^1}\le C\rho^3$
and $|\Qcal_+(\nu)|\le C\|\nu\|_{H^1}^2\le C\rho^4$. Hence
$\Qcal_+(\eta)\ge \Qcal_+(\widetilde\eta)-C\rho^3$.
Then Lemma \ref{lem:Qplus-coercive} and \eqref{eq:eta-tilde-error} yield
\begin{align*}
\Qcal_+(\eta)
&\ge
c_+\|\widetilde\eta\|_{H^1}^2-C\rho^3
\nonumber\\
&\ge
c_+\|\eta\|_{H^1}^2-C\rho^3.
\end{align*}
By Lemma \ref{lem:Qperp-coercive},
\[
\Qcal_\perp(\zeta)\ge c_\perp\|\zeta\|_{H^1}^2.
\]
Letting $\rho_0$ sufficiently small, \eqref{eqn:h-estimate} implies that $\rho$ is also sufficiently small.
Using Lemma \ref{lem:local-expansion},
\[
\Fcal(U)
\ge
c_0\rho^2-C\rho^3\ge\frac{c_0}{2}\rho^2\geq
\frac{c_0}{2}\operatorname{dist}_{H^1\times H^1}^2(U,\Zcal),
\]
where $c_0:=\min\{c_+,c_\perp\}>0.$
Finally, by Lemma \ref{lem:qual-compact}, we know that there exists $\eta_0>0$ such that if
$\mathcal F(U)\leq\eta_0$, then $\operatorname{dist}_{H^1\times H^1}(U,\Zcal)<\rho_0$.
The proof is completed.
\end{proof}

\section{An auxiliary variational problem}
\label{sec:threshold}
In this section, we introduce the auxiliary variational problem that will determine the separation threshold in Section \ref{sec:scaling}.
Define
\begin{equation}
\Lambda_*
:=
\sup_{\substack{U\in\Mcal\\ \Pcal(U)<c_\gamma}}
\frac{c_\gamma-\Pcal(U)}{\Fcal(U)}.
\label{eq:Lambda-def}
\end{equation}
By Lemmas \ref{lem:F-zero} and \ref{lem:bubble-P},
if $\Pcal(U)<c_\gamma$, then $\Fcal(U)>0$ and $\Lambda_*\geq0$.
Now we prove that
\begin{equation}
0<\Lambda_*<\infty.
\label{eq:Lambda-positive-finite}
\end{equation}

\begin{lemma}
\label{prop:Lambda-positive}
There holds $\Lambda_*>0$.
\end{lemma}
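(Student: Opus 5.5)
The plan is to exhibit a single admissible competitor. Since $\Lambda_*$ in \eqref{eq:Lambda-def} is a supremum, one $U\in\Mcal$ with $\Pcal(U)<c_\gamma$ and $0<\Fcal(U)<\infty$ already gives $\Lambda_*\ge (c_\gamma-\Pcal(U))/\Fcal(U)>0$.

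The bubbles $\Phi_{\eps,y}$ are useless here, because Lemma \ref{lem:bubble-P} gives $\Pcal(\Phi_{\eps,y})\ge c_\gamma$. The remedy is to place each component at its own trap center, which lowers the potential energy below $c_\gamma$.

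Concretely, for small $\eps>0$ I take
\[
U_\eps:=\bigl(\sqrt{\gamma}\,q_{\eps,0},\ \sqrt{1-\gamma}\,q_{\eps,e_1}\bigr),
\]
with $q_{\eps,y}$ as in \eqref{eqn:Phi-definition}. The steps are as follows.

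First, $U_\eps\in\Mcal$. The mass is $\gamma\|q\|_2^2+(1-\gamma)\|q\|_2^2=1$ by \eqref{eq:q-properties}, and both components lie in $\Sigma(\R^2)$ by the exponential decay of Lemma \ref{lem:q-tail}.

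Second, since $q$ is radial, the cross terms $\int x q^2\,\dd x$ vanish, exactly as in the proof of Lemma \ref{lem:bubble-P}. This gives
\[
\Pcal(U_\eps)=\gamma\vartheta\eps^2+(1-\gamma)\vartheta\eps^2=\vartheta\eps^2,
\]
which is $<c_\gamma=\gamma(1-\gamma)$ once $\eps^2<c_\gamma/\vartheta$.

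Third, $\Fcal(U_\eps)$ is finite, because the components are in $H^1\cap L^4$. It is also strictly positive. Indeed, $\Fcal\ge0$ by Lemma \ref{lem:F-decomp}, and if $\Fcal(U_\eps)=0$ then Lemma \ref{lem:F-zero}, applied to the nonnegative $U_\eps$, would force $U_\eps=\Phi_{\eps',y}$ for some $\eps'>0$ and $y\in\R^2$. That is impossible, since the two components of $U_\eps$ are centered at different points $0\neq e_1$. Alternatively, Lemma \ref{lem:bubble-P} would then give $\Pcal(U_\eps)\ge c_\gamma$, contradicting the second step.

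Combining the three steps gives $\Lambda_*\ge(c_\gamma-\vartheta\eps^2)/\Fcal(U_\eps)>0$.

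There is no real obstacle here. The only point needing care is the strict positivity $\Fcal(U_\eps)>0$, so that the quotient is well defined and finite. This is handled by Lemma \ref{lem:F-zero} together with the fact that bubbles have potential energy at least $c_\gamma$.
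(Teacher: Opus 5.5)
Your proof is correct, but it takes a different route from the paper.

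The paper uses the semi-trivial family $U_\tau=(\tau q(\tau x),0)$. For it, $\Pcal(U_\tau)=\vartheta/\tau^2$, and from $\|\nabla q\|_2=1$ and $\|q\|_4^4=2/a^*$ alone one gets the explicit value $\Fcal(U_\tau)=\tau^2(1-a_1/a^*)>0$. So the denominator is positive by direct computation, using $a_1<a^*$.

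You instead place each component at its own trap center, so $\Pcal(U_\eps)=\vartheta\eps^2$. There are not even cross terms to cancel here, so your appeal to radial symmetry is superfluous but harmless. You then get $\Fcal(U_\eps)>0$ qualitatively, from the rigidity Lemma \ref{lem:F-zero} combined with Lemma \ref{lem:bubble-P}. This is exactly the observation the paper records right after \eqref{eq:Lambda-def}: $\Pcal(U)<c_\gamma$ forces $\Fcal(U)>0$.

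What your route buys:
\begin{itemize}
\item It shows that any $U\in\Mcal$ with $\Pcal(U)<c_\gamma$ suffices.
\item It avoids computing the interspecies term $\beta^*\int u_1^2u_2^2\,\dd x$. For your test pair this term is nonzero, though exponentially small as $\eps\to0$.
\end{itemize}
The price is that you rely on the characterization of optimizers behind Lemma \ref{lem:F-zero}, and you get no explicit lower bound.

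The paper's explicit computation is not just cosmetic. The same semi-trivial family, with $\eps$ optimized, is reused in Lemma \ref{prop:semi-trivial-endpoint} to produce the quantitative bound $\Lambda_*\ge\Lambda_1$. That bound is what beats $\Lambda_0$ in the proof of Theorem \ref{thm:main-2}.
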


\begin{proof}
Let
\[
U_\tau(x):=(\tau q(\tau x),0)\ \text{with}\ \tau>0,
\]
then $U_\tau\in\Mcal$ and
$\Pcal(U_\tau)=\frac{\vartheta}{\tau^2}$, which imply that
$\Pcal(U_\tau)<c_\gamma$ for $\tau$ large.
Moreover,
\[
\Fcal(U_\tau)
=
\tau^2-\frac{a_1}{2}\frac{2\tau^2}{a^*}
=
\tau^2\left(1-\frac{a_1}{a^*}\right)>0.
\]
Therefore, $\Lambda_*>0$.
\end{proof}

\begin{proposition}
\label{prop:Lambda-finite}
There holds $\Lambda_*<\infty$.
Equivalently, there exists $C>0$ such that
\begin{equation*}
(c_\gamma-\Pcal(U))_+
\le C\Fcal(U),
\ \forall U\in\Mcal,
\end{equation*}
where $(c_\gamma-\Pcal(U))_+$ is the positive part of $c_\gamma-\Pcal(U)$.
\end{proposition}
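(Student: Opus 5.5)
We argue by contradiction. Suppose there are $U_n=(u_{1n},u_{2n})\in\Mcal$ with $\Pcal(U_n)<c_\gamma$ and
\[
\frac{c_\gamma-\Pcal(U_n)}{\Fcal(U_n)}\to\infty .
\]
Replacing $u_{in}$ by $|u_{in}|$ changes neither $\Pcal$ nor the quartic terms, and it does not increase the gradient terms. So we may assume $u_{in}\ge0$. Since $c_\gamma-\Pcal(U_n)\le c_\gamma$, we get $\Fcal(U_n)\to0$.

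\textbf{Step 1: the kinetic energy blows up.} Set $t_n^2:=\|\nabla U_n\|_2^2$. We claim $t_n\to\infty$. If $t_n$ stayed bounded along a subsequence, then $\Pcal(U_n)\le c_\gamma$ would give a bound in $\Sigma\times\Sigma$, hence compactness in $L^2\times L^2$ and in $L^4\times L^4$. A weak limit $U$ would then lie in $\Mcal$ with $\Fcal(U)\le0$ by lower semicontinuity, so $\Fcal(U)=0$. Lemma~\ref{lem:F-zero} gives $U=\Phi_{\eps,y}$. Lower semicontinuity of $\Pcal$ and Lemma~\ref{lem:bubble-P} then give
\[
c_\gamma+\vartheta\eps^2\le\liminf_n\Pcal(U_n)\le c_\gamma,
\]
which is a contradiction. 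So $t_n\to\infty$.

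\textbf{Step 2: rescale and apply stability.} Define $V_n(x):=t_n^{-1}U_n(x/t_n)$. Then $V_n\in\Ncal_+$ and $\Fcal(V_n)=t_n^{-2}\Fcal(U_n)\to0$. Proposition~\ref{prop:stability} yields $y_n$ with
\[
\|V_n-\Phi(\cdot-y_n)\|_{H^1\times H^1}^2\le C\,t_n^{-2}\Fcal(U_n).
\]
In the original variables, write $U_n=\Phi_{\eps_n,z_n}+R_n$ with $\eps_n=1/t_n$ and $z_n=y_n/t_n$. The remainder then satisfies $\|R_n\|_2^2\le C\eps_n^2\Fcal(U_n)$.

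\textbf{Step 3: expand the potential energy.} Write $\Phi_n:=\Phi_{\eps_n,z_n}$. Since $\Pcal$ is quadratic,
\[
\Pcal(U_n)=\Pcal(\Phi_n)+2\mathcal B(\Phi_n,R_n)+\Pcal(R_n)\ge c_\gamma+|z_n-z_\gamma|^2+\vartheta\eps_n^2+2\mathcal B(\Phi_n,R_n),
\]
where $\mathcal B$ is the bilinear form with weights $|x|^2$ and $|x-e_1|^2$. For the cross term, center at $z_\gamma$: $|x|^2=|x-z_\gamma|^2+2z_\gamma\cdot(x-z_\gamma)+|z_\gamma|^2$, and similarly for the second weight. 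The constant parts produce
\[
\bigl\langle(|z_\gamma|^2\sqrt\gamma,\ |z_\gamma-e_1|^2\sqrt{1-\gamma})q_{\eps_n,z_n},R_n\bigr\rangle_{L^2\times L^2}.
\]
This is not small a priori. The mass constraint $\|\Phi_n+R_n\|_2^2=1$ gives $2\langle\Phi_n,R_n\rangle=-\|R_n\|_2^2$, which only controls the component of $R_n$ along $e$. The component of $R_n$ along $f$, weighted by $q_{\eps_n,z_n}$, therefore appears linearly, with size $O(\|R_n\|_2)=O(\eps_n\Fcal(U_n)^{1/2})$.

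All remaining cross terms carry a factor $|x-z_\gamma|\lesssim\eps_n+|z_n-z_\gamma|$ on the support scale of $q_{\eps_n,z_n}$, using Lemma~\ref{lem:q-tail} for the tails. They are bounded by
\[
C(\eps_n+|z_n-z_\gamma|)\,\|R_n\|_2\le C(\eps_n+|z_n-z_\gamma|)\,\eps_n\Fcal(U_n)^{1/2}.
\]
By Young's inequality this is at most $\tfrac12(\vartheta\eps_n^2+|z_n-z_\gamma|^2)+C\eps_n^2\Fcal(U_n)$.

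\textbf{Step 4: the main obstacle.} The linear $f$-direction term is the crux. I would handle it by using the $\zeta$-part of Lemma~\ref{lem:local-expansion}: $\Fcal$ controls the $f$-component in $H^1$ after rescaling, $\Fcal(U_n)\ge c\,t_n^2\|\zeta_n\|_{H^1}^2$ in rescaled variables. One then bounds the linear term by
\[
C\|\zeta_n\|_{L^2}\le\delta\,\vartheta\eps_n^2+C_\delta\,\eps_n^{-2}\|\zeta_n\|_{L^2}^2\le\delta\,\vartheta\eps_n^2+C_\delta\,\Fcal(U_n),
\]
where $\|\zeta_n\|_{L^2}$ is measured in the original variables, which is scale-invariant for the $L^2$ norm. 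This is where the coercivity of $\Qcal_\perp$ is decisive.

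Combining Steps 3 and 4,
\[
c_\gamma-\Pcal(U_n)\le C\Fcal(U_n)+C\eps_n^2\Fcal(U_n)\le C\Fcal(U_n),
\]
which contradicts the assumed divergence of the ratio. I expect the delicate bookkeeping of the scales in Step 4 — $\eps_n^{\pm2}$ factors against $\Fcal$ computed in rescaled versus original variables — to be the main difficulty. It is precisely what makes the quadratic (rather than merely qualitative) stability estimate of Proposition~\ref{prop:stability} necessary.
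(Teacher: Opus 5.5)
Your proof is correct and is essentially the paper's argument. It uses the same compactness step forcing $\|\nabla U_n\|_2\to\infty$, the same rescaling plus quadratic stability estimate giving $\|R_n\|_2\le C\eps_n\sqrt{\Fcal(U_n)}$, and the same Young-type absorption of $C\eps_n\sqrt{\Fcal(U_n)}$ into $\vartheta\eps_n^2$ and $C\Fcal(U_n)$ (the paper's Step 5); the only technical difference is that you expand $\Pcal$ exactly around the bubble and drop $\Pcal(R_n)\ge0$, instead of localizing to $B_{R_n}$ with Lemma~\ref{lem:q-tail}. Your Step 4 is not a real obstacle, since Cauchy--Schwarz bounds the whole cross term by $C(1+|z_n-z_\gamma|^2)\|R_n\|_2$; the part $|z_n-z_\gamma|^2\|R_n\|_2$, which your bound $C(\eps_n+|z_n-z_\gamma|)\|R_n\|_2$ overlooks because you never show $z_n$ bounded, is absorbed into $|z_n-z_\gamma|^2$ because $\|R_n\|_2\to0$, so the $e$/$f$ splitting only becomes decisive in the sharper analysis of $\Lambda_0$ in Section~\ref{sec:endpoint}.
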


\begin{proof}
Assume by contradiction that $\Lambda_*=\infty$,
then there exists $U_n=(u_{1n},u_{2n})\in\Mcal$ such that
\begin{equation}
\Pcal(U_n)<c_\gamma,
\label{eq:P-less-c}
\end{equation}
and
\begin{equation*}
\alpha_n
:=
\frac{\Fcal(U_n)}{\delta_n}
\to0\ \text{as}\ n\to\infty,
\end{equation*}
where $\delta_n:=c_\gamma-\Pcal(U_n)>0.$
Noting that
$\Pcal(u_{1n},u_{2n})=\Pcal(|u_{1n}|,|u_{2n}|)$ and $\Fcal(u_{1n},u_{2n})=\Fcal(|u_{1n}|,|u_{2n}|)$, we may assume $u_{1n},u_{2n}\ge0$.
Since $0<\delta_n\le c_\gamma$,
\begin{equation*}
0\le\Fcal(U_n)=\alpha_n\delta_n
\le c_\gamma\alpha_n\to0\ \text{as}\ n\to\infty.
\end{equation*}
In the following, we use the blow-up analysis to obtain a desired contradiction.

\medskip
\noindent
\textbf{Step 1: the gradient energy blow-up.}

Set
\begin{equation*}
K_n
:=
\int_{\R^2}(|\nabla u_{1n}|^2+|\nabla u_{2n}|^2)\dd x.
\end{equation*}
We claim that $K_n\to\infty$ as $n\to\infty$.
Suppose a subsequence satisfies $K_n\le C$.
From \eqref{eq:P-less-c},
\[
\int_{\R^2}|x|^2u_{1n}^2 \dd x\le c_\gamma.
\]
Since $|x|^2\le2|x-e_1|^2+2$, then
\[
\int_{\R^2}|x|^2u_{2n}^2 \dd x\le2c_\gamma+2.
\]
It follows that
\begin{equation*}
\int_{\R^2}|x|^2(u_{1n}^2+u_{2n}^2) \dd x\le C,
\end{equation*}
which indicates that
\[
\int_{|x|>R}|U_n|^2 \dd x\le CR^{-2}, R>0.
\]
Recall that $U_n\in\Mcal$, then pass to a subsequence if necessary, there exists $U_0\in H^1(\R^2)\times H^1(\R^2)$ such that $\text{as}\ n\to\infty$,
\[
U_n\rightharpoonup U_0\ \text{in }H^1(\R^2)\times H^1(\R^2),
\]
\begin{equation*}
U_n\to U_0
\ \text{in }L^2(\R^2)\times L^2(\R^2) \ \text{and a.e. in }\R^2,
\end{equation*}
which mean that $U_0\in\Mcal$ and $\mathcal F(U_0)\geq0$ by Lemma \ref{lem:F-decomp}. Moreover, the interpolation inequality implies that
\begin{equation*}
U_n\to U_0
\ \text{in }L^4(\R^2)\times L^4(\R^2)\ \text{as}\ n\to\infty.
\end{equation*}
Therefore,
\[
\Fcal(U_0)
\le\liminf_{n\to\infty}\Fcal(U_n)=0,
\]
and then $\Fcal(U_0)=0$. In view of
Lemma \ref{lem:F-zero}, we have $U_0=\Phi_{\eps,y}$ for some
$\eps>0$ and $y\in\R^2$. Then Lemma \ref{lem:bubble-P} gives
$\Pcal(U_0)>c_\gamma$. On the other hand, by  Fatou's lemma, we know that
\[
\Pcal(U_0)\le\liminf_{n\to\infty}\Pcal(U_n)\le c_\gamma,
\]
a contradiction.
Hence $K_n\to\infty\ \text{as}\ n\to\infty$.

\medskip
\noindent
\textbf{Step 2: normalization and stability.}

Define
$\eps_n:=K_n^{-1/2}\to0\ \text{as}\ n\to\infty$
and $\widetilde W_n(x):=\eps_nU_n(\eps_nx).$
Then
$$\|\widetilde W_n\|_2^2=1,\ \|\nabla\widetilde W_n\|_2^2=1\ \text{and}\ \Fcal(\widetilde W_n)=\eps_n^2\Fcal(U_n).$$
Proposition \ref{prop:stability} yields for sufficiently large $n$, there is $y_n\in\R^2$ such that
\begin{equation*}
\|
\widetilde W_n-\Phi(\cdot-y_n)
\|_{H^1\times H^1}
\le
C\eps_n\sqrt{\Fcal(U_n)}.
\end{equation*}
Set
$x_n:=\eps_ny_n\ \text{and}\
W_n(x):=\eps_nU_n(x_n+\eps_nx),$
then
\begin{equation}
\|W_n-\Phi\|_{H^1\times H^1}
\le
C\eps_n\sqrt{\Fcal(U_n)}.
\label{eq:Wn-Phi}
\end{equation}
\medskip
\noindent
\textbf{Step 3: uniform boundedness of $\{x_n\}$.}

Choose $R_0>0$ such that
\[
\int_{B_{R_0}(0)}q^2 \dd x>\frac34.
\]
By \eqref{eq:Wn-Phi}, for sufficiently large $n$,
\[
\int_{B_{R_0}(0)}|W_n|^2 \dd x\ge\frac12,
\]
which means that
\begin{equation}
\int_{B_{\eps_nR_0}(x_n)}|U_n|^2 \dd x\ge\frac12.
\label{eq:mass-core}
\end{equation}
Let $d_n:=\operatorname{dist}(x_n,\{0,e_1\}).$
For $x\in B_{\eps_nR_0}(x_n)$,
\[
\min\{|x|,|x-e_1|\}\ge(d_n-\eps_nR_0)_+.
\]
Since
\[
|x|^2u_{1n}^2+|x-e_1|^2u_{2n}^2
\ge
\min\{|x|^2,|x-e_1|^2\}|U_n|^2,
\]
then
\eqref{eq:mass-core} implies that
\[
\Pcal(U_n)\ge\frac12(d_n-\eps_nR_0)_+^2.
\]
Recall that $\Pcal(U_n)<c_\gamma$, then $\{d_n\}$ is uniformly bounded and there exists $C>0$ such that
\begin{equation}
|x_n|\le C.
\label{eq:xn-bounded}
\end{equation}

\medskip
\noindent
\textbf{Step 4: localized potential energy estimate.}

Let
\[
\xi_1=0,\  \xi_2=e_1\ \text{and}\
A_{i,n}(x):=|x_n+\eps_nx-\xi_i|^2,
\]
then for $W_n(x)=\eps_nU_n(x_n+\eps_nx)=(w_{1n},w_{2n})$,
\begin{equation*}
\Pcal(U_n)
=
\sum_{i=1}^2
\int_{\R^2}A_{i,n}(x)w_{in}(x)^2\dd x.
\label{eq:P-rescaled}
\end{equation*}
By Lemma \ref{lem:q-tail}, there exist $\sigma,C>0$ such that
\[
\int_{|x|>R}(1+|x|^2)q(x)^2\dd x
\le Ce^{-\sigma R}
\]
for large $R>0$.
Choose $L>0$ such that $\sigma L>2$ and set
\begin{equation*}
R_n:=L|\log\eps_n|.
\end{equation*}
Then $R_n\to\infty$ and $\eps_nR_n\to0$.
Using \eqref{eq:xn-bounded}, for sufficiently large $n$,
\begin{equation}
\sup_{|x|\le R_n}A_{i,n}(x)\le C.
\label{eq:A-core-bound}
\end{equation}
Let
\[
\phi_1=\sqrt{\gamma} q,\
\phi_2=\sqrt{1-\gamma} q.
\]
By \eqref{eq:Wn-Phi} and \eqref{eq:A-core-bound}, we infer that
\begin{align*}
\left|
\int_{B_{R_n}(0)}A_{i,n}(w_{in}^2-\phi_i^2) \dd x
\right|
&\le
C\|w_{in}-\phi_i\|_2
(\|w_{in}\|_2+\|\phi_i\|_2)
\nonumber\\
&\le
C\eps_n\sqrt{\Fcal(U_n)}.
\end{align*}
Therefore,
\begin{equation}
\Pcal(U_n)
\ge
\sum_{i=1}^2\int_{B_{R_n}(0)}A_{i,n}\phi_i^2 \dd x
-
C\eps_n\sqrt{\Fcal(U_n)}.
\label{eq:P-after-stability}
\end{equation}
Note that
\begin{align}
\sum_{i=1}^2\int_{\R^2}A_{i,n}\phi_i^2 \dd x
&=
\gamma|x_n|^2+(1-\gamma)|x_n-e_1|^2
+\vartheta\eps_n^2
\nonumber\\
&=
c_\gamma+|x_n-z_\gamma|^2+\vartheta\eps_n^2.
\label{eq:reference-tail-R-2}
\end{align}
Moreover, it follows from Lemma \ref{lem:q-tail} that
\begin{align}
\sum_{i=1}^2\int_{|x|>R_n}A_{i,n}\phi_i^2 \dd x
&\le
C\int_{|x|>R_n}q^2 \dd x
+
C\eps_n^2\int_{|x|>R_n}|x|^2q^2 \dd x
\nonumber\\
&=o(\eps_n^2)\ \text{as}\ n\to\infty.
\label{eq:reference-tail}
\end{align}
Combining \eqref{eq:P-after-stability}-\eqref{eq:reference-tail},
\begin{equation}
\Pcal(U_n)
\ge
c_\gamma+|x_n-z_\gamma|^2+\vartheta\eps_n^2
-C\eps_n\sqrt{\Fcal(U_n)}
-o(\eps_n^2) \ \text{as}\ n\to\infty.
\label{eq:key-P-lower}
\end{equation}

\medskip
\noindent
\textbf{Step 5: contradiction.}

Since $\Pcal(U_n)=c_\gamma-\delta_n$, then
\eqref{eq:key-P-lower} gives
\[
\delta_n+|x_n-z_\gamma|^2+\vartheta\eps_n^2
\le
C\eps_n\sqrt{\Fcal(U_n)}+o(\eps_n^2).
\]
For sufficiently large $n$, we have
\begin{equation*}
\delta_n+\frac{\vartheta}{2}\eps_n^2
\le
C\eps_n\sqrt{\Fcal(U_n)}.
\end{equation*}
Recall that $\Fcal(U_n)=\alpha_n\delta_n$,
\[
\delta_n+\frac{\vartheta}{2}\eps_n^2
\le
C\sqrt{\alpha_n}\eps_n\sqrt{\delta_n}.
\]
Set $X_n:=\sqrt{\delta_n}/\eps_n$, then
\begin{equation*}
X_n^2+\frac{\vartheta}{2}
\le
C\sqrt{\alpha_n}X_n\le
\frac12X_n^2+\frac{C^2}{2}\alpha_n,
\end{equation*}
which means that
$\frac12X_n^2+\frac{\vartheta}{2}
\le
\frac{C^2}{2}\alpha_n$
and $\vartheta\le C^2\alpha_n\to0$, contradicting $\vartheta>0$.
\end{proof}

\section{Scaling argument and proof of Theorem \ref{thm:main}}
\label{sec:scaling}

In this section, we exploit the scaling argument
 to
translate the variational structure of $\Lambda_*$ into the distance $d$, and prove
Theorem~\ref{thm:main}.
For $d>0$, define
\begin{equation*}
\Pcal_d(U)
=
\int_{\R^2}|x|^2u_1^2 \dd x
+
\int_{\R^2}|x-de_1|^2u_2^2 \dd x,
\end{equation*}
then
\begin{equation*}
E_{a_1,a_2,\beta^*}^{d}(U)
=
\Fcal(U)+\Pcal_d(U).
\end{equation*}
For $U=(u_1,u_2)\in\Mcal$, define $T_dU:=((T_dU)_1,(T_dU)_2)$ with
\begin{equation*}
(T_dU)_i(x)
:=
\frac1d u_i\left(\frac{x}{d}\right), i=1,2.
\end{equation*}
Then $T_d$ is a bijection of $\Sigma(\R^2)\times\Sigma(\R^2)$ and preserves the mass.
Moreover,
\begin{equation*}
\Fcal(T_dU)=d^{-2}\Fcal(U)\ \text{and}\
\Pcal_d(T_dU)=d^2\Pcal(U).
\end{equation*}
Therefore,
\begin{equation}
e_*(d)=d^2H(d^{-4}),
\label{eq:e-H}
\end{equation}
where
\begin{equation*}
H(\lambda)
:=
\inf_{U\in\Mcal}
\left\{
\Pcal(U)+\lambda\Fcal(U)
\right\},
\  \lambda>0.
\end{equation*}

\begin{lemma}
\label{lem:H-upper}
For every $\lambda>0$, there holds
\begin{equation*}
H(\lambda)\le c_\gamma.
\end{equation*}
Equivalently, $e_*(d)\leq c_\gamma d^2=m_\gamma (d)$.
\end{lemma}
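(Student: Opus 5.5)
We prove $H(\lambda)\le c_\gamma$ by evaluating $\Pcal+\lambda\Fcal$ along the concentrating family $\Phi_{\eps,y}$ from \eqref{eqn:Phi-definition} and letting $\eps\to0$.

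Fix $\lambda>0$. The test functions are $\Phi_{\eps,z_\gamma}$ with $\eps>0$, centered at $z_\gamma=(1-\gamma)e_1$. We check three facts about this family.

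First, $\Phi_{\eps,z_\gamma}\in\Mcal$. Indeed, $\|q_{\eps,y}\|_2^2=\|q\|_2^2=1$ by \eqref{eq:q-properties}, so
\[
\|\Phi_{\eps,z_\gamma}\|_2^2=\gamma+(1-\gamma)=1 .
\]
Moreover $q$ decays exponentially (Lemma \ref{lem:q-tail}), so both components lie in $\Sigma(\R^2)$.

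Second, $\Fcal(\Phi_{\eps,z_\gamma})=0$. This follows from Lemma \ref{lem:F-zero}, since the components are nonnegative. Equivalently, by Lemma \ref{lem:F-decomp}: $D=0$ because the pair is an optimizer of \eqref{eq:vector-GN}, and $s_1\gamma=s_2(1-\gamma)$ by the definition of $\gamma$, so $s_1u_1^2-s_2u_2^2=0$.

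Third, Lemma \ref{lem:bubble-P} with $y=z_\gamma$ gives
\[
\Pcal(\Phi_{\eps,z_\gamma})=c_\gamma+\vartheta\eps^2 .
\]

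Combining these,
\[
H(\lambda)\le \Pcal(\Phi_{\eps,z_\gamma})+\lambda\Fcal(\Phi_{\eps,z_\gamma})=c_\gamma+\vartheta\eps^2 \quad\text{for every }\eps>0 .
\]
Letting $\eps\to0^+$ yields $H(\lambda)\le c_\gamma$.

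For the equivalent form, use the scaling identity \eqref{eq:e-H} with $\lambda=d^{-4}$:
\[
e_*(d)=d^2H(d^{-4})\le c_\gamma d^2=\gamma(1-\gamma)d^2=m_\gamma(d).
\]

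None of these steps is a real obstacle; all the work was done in Lemmas \ref{lem:F-zero} and \ref{lem:bubble-P}. The one point to note is that the bound is only approached as $\eps\to0$, never attained along this family, because $\Pcal(\Phi_{\eps,z_\gamma})-c_\gamma=\vartheta\eps^2>0$. This is consistent with the nonexistence result in Theorem \ref{thm:main}(i).
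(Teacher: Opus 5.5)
Your proposal is correct and is essentially the paper's proof. You test $\Pcal+\lambda\Fcal$ on $\Phi_{\eps,z_\gamma}$, use Lemmas \ref{lem:F-zero} and \ref{lem:bubble-P} to get $\Fcal(\Phi_{\eps,z_\gamma})=0$ and $\Pcal(\Phi_{\eps,z_\gamma})=c_\gamma+\vartheta\eps^2$, and let $\eps\to0$; your check that $\Phi_{\eps,z_\gamma}\in\Mcal$ and your use of \eqref{eq:e-H} for the equivalent form only fill in details the paper leaves implicit.
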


\begin{proof}

Take the test function $\Phi_{\eps,y}$ as in \eqref{eqn:Phi-definition} with $y=z_\gamma$.
By Lemmas \ref{lem:F-zero} and \ref{lem:bubble-P},
\[
\Fcal(\Phi_{\eps,z_\gamma})=0
\ \text{and}\
\Pcal(\Phi_{\eps,z_\gamma})
=
c_\gamma+\vartheta\eps^2.
\]
Letting $\eps\to0$, we get that $H(\lambda)\le c_\gamma.$
\end{proof}

\begin{proposition}
\label{prop:H-transition}
Let $\Lambda_*$ be defined by \eqref{eq:Lambda-def}.
\begin{enumerate}
\item [$(i)$]If $\lambda>\Lambda_*$, then
\[
H(\lambda)=c_\gamma,
\]
and $H(\lambda)$ is not attained.
\item [$(ii)$]If $0<\lambda<\Lambda_*$, then
\[
H(\lambda)<c_\gamma.
\]
\item [$(iii)$]If $\lambda=\Lambda_*$, then
\[
H(\Lambda_*)=c_\gamma.
\]
Moreover, $H(\Lambda_*)$ is attained if and only if $\Lambda_*$ is attained.
\end{enumerate}
\end{proposition}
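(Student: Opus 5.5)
The plan is to read everything directly off the definition \eqref{eq:Lambda-def} of $\Lambda_*$, together with Lemma~\ref{lem:H-upper} ($H(\lambda)\le c_\gamma$) and Lemma~\ref{lem:F-decomp} ($\Fcal\ge0$). For any $U\in\Mcal$ we write
\[
\Pcal(U)+\lambda\Fcal(U)-c_\gamma=\lambda\Fcal(U)-\bigl(c_\gamma-\Pcal(U)\bigr).
\]
The whole argument then compares this quantity with the ratio $(c_\gamma-\Pcal(U))/\Fcal(U)$, whose supremum over $\{\Pcal<c_\gamma\}$ is $\Lambda_*$. Recall that on this set $\Fcal(U)>0$, by Lemmas~\ref{lem:F-zero} and~\ref{lem:bubble-P} applied to $(|u_1|,|u_2|)$.

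\emph{Part (i).} Let $\lambda>\Lambda_*$ and take $U\in\Mcal$. If $\Pcal(U)\ge c_\gamma$, then $\Pcal(U)+\lambda\Fcal(U)\ge c_\gamma$ because $\Fcal\ge0$. If $\Pcal(U)<c_\gamma$, then $c_\gamma-\Pcal(U)\le\Lambda_*\Fcal(U)<\lambda\Fcal(U)$, where the last inequality is strict since $\Fcal(U)>0$. So $\Pcal(U)+\lambda\Fcal(U)\ge c_\gamma$ in all cases, and combined with Lemma~\ref{lem:H-upper} this gives $H(\lambda)=c_\gamma$.

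For non-attainment, suppose $U$ is a minimizer. The second case above is excluded because it gives a strict inequality, so $\Pcal(U)\ge c_\gamma$ and $\Pcal(U)+\lambda\Fcal(U)=c_\gamma$. This forces $\Fcal(U)=0$ and $\Pcal(U)=c_\gamma$. Replacing $U$ by $(|u_1|,|u_2|)$ leaves $\Fcal$ and $\Pcal$ unchanged, so Lemma~\ref{lem:F-zero} gives $|U|=\Phi_{\eps,y}$. Lemma~\ref{lem:bubble-P} then yields $\Pcal(U)\ge c_\gamma+\vartheta\eps^2>c_\gamma$, a contradiction.

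\emph{Part (ii).} Let $0<\lambda<\Lambda_*$. By the definition of the supremum there is $U\in\Mcal$ with $\Pcal(U)<c_\gamma$ and $(c_\gamma-\Pcal(U))/\Fcal(U)>\lambda$. Then $\Pcal(U)+\lambda\Fcal(U)<c_\gamma$, hence $H(\lambda)<c_\gamma$.

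\emph{Part (iii).} The argument of (i) with the non-strict inequality $c_\gamma-\Pcal(U)\le\Lambda_*\Fcal(U)$ gives $\Pcal(U)+\Lambda_*\Fcal(U)\ge c_\gamma$ for all $U\in\Mcal$. Hence $H(\Lambda_*)=c_\gamma$; this uses the finiteness of $\Lambda_*$ from Proposition~\ref{prop:Lambda-finite}.

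If $\Lambda_*$ is attained at some $U$ with $\Pcal(U)<c_\gamma$, then $\Pcal(U)+\Lambda_*\Fcal(U)=c_\gamma$, so $U$ minimizes $H(\Lambda_*)$.

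Conversely, let $U$ be a minimizer for $H(\Lambda_*)$. If $\Pcal(U)\ge c_\gamma$, the argument from (i) again forces $\Fcal(U)=0$ and $\Pcal(U)=c_\gamma$, which is impossible. Therefore $\Pcal(U)<c_\gamma$, and the equality $c_\gamma-\Pcal(U)=\Lambda_*\Fcal(U)$ shows that $U$ attains the supremum in \eqref{eq:Lambda-def}.

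None of these steps is hard once Proposition~\ref{prop:Lambda-finite} and Lemma~\ref{lem:F-zero} are available. The only delicate point is the degenerate case $\Fcal(U)=0$, $\Pcal(U)=c_\gamma$, and Lemma~\ref{lem:bubble-P} rules it out because $\vartheta\eps^2>0$.
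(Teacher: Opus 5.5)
Your proposal is correct and follows essentially the same route as the paper. Both split according to whether $\Pcal(U)\ge c_\gamma$ or $\Pcal(U)<c_\gamma$, use $c_\gamma-\Pcal(U)\le\Lambda_*\Fcal(U)$ on the second set together with Lemma~\ref{lem:H-upper}, and rule out the degenerate case $\Pcal(U)=c_\gamma$, $\Fcal(U)=0$ via Lemmas~\ref{lem:F-zero} and~\ref{lem:bubble-P}. Your additional remarks make explicit some details the paper leaves implicit: that you must pass to $(|u_1|,|u_2|)$ because Lemma~\ref{lem:F-zero} is stated only for nonnegative pairs, and that the strict inequality in (i) relies on $\Fcal(U)>0$.
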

\begin{proof}
Let $U\in\Mcal$.
If $\Pcal(U)\ge c_\gamma$, then $\Pcal(U)+\lambda\Fcal(U)\ge c_\gamma$
because $\Fcal(U)\ge0$.
If $\Pcal(U)<c_\gamma$,
by the definition of $\Lambda_*$, we know that
\begin{equation}
c_\gamma-\Pcal(U)
\le
\Lambda_*\Fcal(U).
\label{eq:Lambda-ineq}
\end{equation}

If $\lambda>\Lambda_*$, then for $\Pcal(U)<c_\gamma$,
\[
\Pcal(U)+\lambda\Fcal(U)
>
\Pcal(U)+\Lambda_*\Fcal(U)
\ge c_\gamma.
\]
Thus, $H(\lambda)\geq c_\gamma$, and Lemma \ref{lem:H-upper} gives
\[
H(\lambda)=c_\gamma.
\]
If there exists a minimizer, then either $\Pcal(U)<c_\gamma$, which is impossible,
or $\Pcal(U)\ge c_\gamma$, which means
\[
\Pcal(U)=c_\gamma\ \text{and}\ \Fcal(U)=0.
\]
This contradicts Lemma \ref{lem:bubble-P}. Thus, the infimum $H(\lambda)$ is not attained.

If $\lambda<\Lambda_*$, the definition of the supremum implies that there is
$U\in\Mcal$ such that $\Pcal(U)<c_\gamma$ and
\[
\frac{c_\gamma-\Pcal(U)}{\Fcal(U)}>\lambda.
\]
Equivalently,
\[
\Pcal(U)+\lambda\Fcal(U)<c_\gamma,
\]
then
 $H(\lambda)<c_\gamma$.

If $\lambda=\Lambda_*$, then by \eqref{eq:Lambda-ineq}, we have
\[
\Pcal(U)+\Lambda_*\Fcal(U)\ge c_\gamma
\]
for all $U\in\Mcal$. Together with Lemma \ref{lem:H-upper}, $H(\Lambda_*)=c_\gamma.$
If the supremum in \eqref{eq:Lambda-def} is attained at $U$, then
\[
\Pcal(U)+\Lambda_*\Fcal(U)=c_\gamma,
\]
that is, $U$ is a minimizer of $H(\Lambda_*)$.
Conversely, suppose $U$ is a minimizer of $H(\Lambda_*)$,
then
\[
\Pcal(U)+\Lambda_*\Fcal(U)=c_\gamma.
\]
If $\Pcal(U)\ge c_\gamma$, then
$\Pcal(U)=c_\gamma$ and $\Fcal(U)=0$, contradicting
Lemma \ref{lem:bubble-P}. Hence, $\Pcal(U)<c_\gamma$ and
\[
\Lambda_*
=
\frac{c_\gamma-\Pcal(U)}{\Fcal(U)},
\]
which mean that $\Lambda_*$ is attained.
\end{proof}
\noindent\textbf{Proof of Theorem \ref{thm:main}.}
By \eqref{eq:Lambda-positive-finite}, define
\[
d_c:=\Lambda_*^{-1/4}\in(0,\infty).
\]
Recall that
\[
e_*(d)=d^2H(d^{-4})
\ \text{and}\
m_\gamma(d)=c_\gamma d^2.
\]

If $0<d<d_c$, then $d^{-4}>\Lambda_*$.
By Proposition \ref{prop:H-transition}-$(i)$, we know that $H(d^{-4})=c_\gamma$
and $H(d^{-4})$ is not attained. Equivalently,
$e_*(d)=c_\gamma d^2=m_\gamma(d)$ is not attained.

If $d>d_c$, then $d^{-4}<\Lambda_*$. By Proposition \ref{prop:H-transition}-$(ii)$, $H(d^{-4})<c_\gamma,$
then
\begin{equation}\label{eq:strict-binding}
e_*(d)=d^2H(d^{-4})<c_\gamma d^2=m_\gamma(d).
\end{equation}
By \cite[Theorem 1.2]{GuoLiWeiZeng2019}, we know that
there exists a minimizer of
$e_*(d)$. Here we give a short proof.
Let $\{U_n\}\subset\Mcal$ be a minimizing sequence of $e_*(d)$, then
$\Fcal(U_n)$ and $\Pcal_d(U_n)$ are bounded.
Let $K_n=\|\nabla U_n\|_2^2.$
 Note that
\[
|x|^2\le2|x-de_1|^2+2d^2.
\]
 If $\{K_n\}$ is bounded, then
similar to the argument
in Step 1 of the proof of Proposition \ref{prop:Lambda-finite},
we know that  $e_*(d)$ is attained.
If $\{K_n\}$ is unbounded, after passing to a subsequence, we may assume that $K_n\to\infty\ \text{as}\ n\to\infty$.
Then using the same notations as in Steps 2-4 of the proof of Proposition \ref{prop:Lambda-finite} and replacing $\xi_2=e_1$ with $\xi_2=de_1$, we can obtain that
for $\eps_n=K_n^{-1/2}$,
\[
\begin{aligned}
\Pcal_d(U_n)
\ge
m_\gamma(d)+|x_n-z_d|^2+\vartheta\eps_n^2-
C\eps_n\sqrt{\Fcal(U_n)}-o(\eps_n^2)\ \text{as}\ n\to\infty,
\end{aligned}
\]
where $z_d=(1-\gamma)de_1.$
Consequently,
\[
\liminf_{n\to\infty}\Pcal_d(U_n)\ge m_\gamma(d).
\]
Since $\Fcal(U_n)\ge0$,
\[
e_*(d)
=
\lim_{n\to\infty}\left[\Fcal(U_n)+\Pcal_d(U_n)\right]
\ge m_\gamma(d),
\]
contradicting \eqref{eq:strict-binding}.
Hence, $\{K_n\}$ is bounded and $e_*(d)$ is attained.

If $d=d_c$, then $d_c^{-4}=\Lambda_*$. By
Proposition \ref{prop:H-transition}-$(iii)$, we have
\[
e_*(d_c)=c_\gamma d_c^2=m_\gamma(d_c).
\]
Moreover, $e_*(d_c)$ is  attained if and only if $\Lambda_*$ is attained.
\qed

\section{Proof of Theorem \ref{thm:main-2}}
\label{sec:endpoint}
In this section, we study the case $d=d_c$ and prove Theorem \ref{thm:main-2}.
By Theorem~\ref{thm:main}-$(iii)$, this is equivalent to proving that
$\Lambda_*$ is  attained.
Let
\begin{equation*}
L_\perp:=-\Delta+1-\kappa Q^2,
\
\kappa=1-\frac{2s_1s_2}{a^*}\in(-1,1).
\end{equation*}
By Lemma \ref{lem:Qperp-coercive}, the quadratic form
\[
\Qcal_\perp(\zeta)=\langle L_\perp\zeta,\zeta\rangle
\]
is coercive on $H^1(\R^2)$.  Hence, by the Lax-Milgram theorem, there is a
unique $\psi_\kappa\in H^1(\R^2)$ satisfying
\begin{equation}
L_\perp\psi_\kappa=q
\ \text{in }H^{-1}(\R^2).
\label{eq:psi-kappa}
\end{equation}
Define
\begin{equation*}
J_\kappa
:=
\langle q,\psi_\kappa\rangle
=
\Qcal_\perp(\psi_\kappa)
>0.
\end{equation*}
Equivalently,
\begin{equation}
J_\kappa
=
\sup_{\zeta\in H^1(\R^2)\setminus\{0\}}
\frac{\left(\int_{\R^2}q\zeta \dd x\right)^2}
{\Qcal_\perp(\zeta)}.
\label{eq:J-variational}
\end{equation}
Indeed, using  \eqref{eq:psi-kappa} and the
Cauchy-Schwarz inequality in the inner product induced by $\Qcal_\perp$, we can infer that
\[
\left|\int_{\R^2}q\zeta \dd x\right|^2
=
|\langle L_\perp\psi_\kappa,\zeta\rangle|^2
\le
\Qcal_\perp(\psi_\kappa)\Qcal_\perp(\zeta)
=
J_\kappa\Qcal_\perp(\zeta),
\]
and equality is obtained for $\zeta=\psi_\kappa$. Thus, \eqref{eq:J-variational} holds.

Recall
\[
c_\gamma=\gamma(1-\gamma),\
\vartheta=\int_{\R^2}|x|^2q^2 \dd x.
\]
and
define the level
\begin{equation}
\Lambda_0:=
\frac{c_\gamma(1-2\gamma)^2}{\vartheta}J_\kappa.
\label{eq:Lambda-conc}
\end{equation}
By the above argument, this is an exact constant.
We will prove that if the maximizing sequence of $\Lambda_*$ is bounded, then
$\Lambda_*$ is  attained. If the maximizing sequence is unbounded, then $\Lambda_*\leq\Lambda_0.$
\begin{lemma}
\label{lem:bounded-max-seq}
Let $\{U_n\}\subset\Mcal$ satisfy
\begin{equation}\label{eq:max-seq}
\Pcal(U_n)<c_\gamma\ \text{and}\
\frac{c_\gamma-\Pcal(U_n)}{\Fcal(U_n)}
\longrightarrow\Lambda_* \ \text{as}\ n\to\infty.
\end{equation}
If
$\sup_n\|\nabla U_n\|_2<\infty,$
then $\Lambda_*$ is attained.
\end{lemma}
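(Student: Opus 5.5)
We follow the compactness argument of Step 1 in the proof of Proposition \ref{prop:Lambda-finite}. First we may replace $U_n$ by $(|u_{1n}|,|u_{2n}|)$, which changes neither $\Pcal$ nor $\Fcal$. Set $\delta_n:=c_\gamma-\Pcal(U_n)\in(0,c_\gamma]$. Since $\Lambda_*<\infty$ by Proposition \ref{prop:Lambda-finite} and $\Lambda_*>0$ by Lemma \ref{prop:Lambda-positive}, the ratio $\delta_n/\Fcal(U_n)$ converges to a finite positive number.

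\textbf{Step 1: compactness.} Since $\Pcal(U_n)<c_\gamma$ and $|x|^2\le 2|x-e_1|^2+2$, the second moments $\int_{\R^2}|x|^2|U_n|^2\,\dd x$ are bounded. Together with $\sup_n\|\nabla U_n\|_2<\infty$ and $\|U_n\|_2=1$, this gives, after passing to a subsequence,
\begin{equation*}
U_n\rightharpoonup U_0 \text{ in } H^1(\R^2)\times H^1(\R^2),
\qquad
U_n\to U_0 \text{ in } L^2(\R^2)\times L^2(\R^2).
\end{equation*}
The $L^2$ convergence is strong because of the uniform tail bound. By interpolation, $U_n\to U_0$ also in $L^4(\R^2)\times L^4(\R^2)$. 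Hence $U_0\in\Mcal$ and $U_0\ge0$. Weak lower semicontinuity of the gradient term and $L^4$ convergence of the quartic terms give
\begin{equation*}
\Fcal(U_0)\le\liminf_{n\to\infty}\Fcal(U_n).
\end{equation*}
Fatou's lemma gives
\begin{equation*}
\Pcal(U_0)\le\liminf_{n\to\infty}\Pcal(U_n)\le c_\gamma.
\end{equation*}

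\textbf{Step 2: the obstacle, excluding degeneration.} We must show that $\Pcal(U_0)<c_\gamma$ and $\Fcal(U_0)>0$. We will prove $\liminf_n\delta_n>0$. Since the ratio $\delta_n/\Fcal(U_n)$ stays bounded and positive, this also gives $\liminf_n\Fcal(U_n)>0$.

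Suppose instead that $\delta_n\to0$ along a subsequence. Then $\Fcal(U_n)\approx\delta_n/\Lambda_*\to0$, so $\Fcal(U_0)=0$. Lemma \ref{lem:F-zero} then gives $U_0=\Phi_{\eps,y}$ for some $\eps>0$ and $y\in\R^2$. By Lemma \ref{lem:bubble-P},
\begin{equation*}
\Pcal(U_0)=c_\gamma+|y-z_\gamma|^2+\vartheta\eps^2>c_\gamma,
\end{equation*}
which contradicts $\Pcal(U_0)\le c_\gamma$. Therefore $\delta_n\ge\delta>0$ for all large $n$, and $\Fcal(U_n)$ is bounded below by a positive constant.

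To pass this to the limit we use strong $L^2$ convergence. Because $\Pcal$ is only lower semicontinuous, we use weighted Fatou: it gives
\begin{equation*}
c_\gamma-\Pcal(U_0)\ge\limsup_{n\to\infty}\delta_n\ge\delta>0.
\end{equation*}
It remains to see that $\Fcal(U_0)>0$. If $\Fcal(U_0)=0$, then Lemma \ref{lem:F-zero} would again give $\Pcal(U_0)>c_\gamma$, which is impossible. So $\Fcal(U_0)>0$.

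\textbf{Step 3: conclusion.} Combining the two semicontinuity estimates,
\begin{equation*}
\frac{c_\gamma-\Pcal(U_0)}{\Fcal(U_0)}
\ge\frac{\limsup_n\delta_n}{\liminf_n\Fcal(U_n)}
\ge\limsup_{n\to\infty}\frac{\delta_n}{\Fcal(U_n)}=\Lambda_*.
\end{equation*}
Since $U_0\in\Mcal$ is admissible, the ratio at $U_0$ is also at most $\Lambda_*$. Hence $U_0$ attains $\Lambda_*$.

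The main obstacle is the one-sided nature of the semicontinuity in Step 2. It has to be arranged so that both inequalities point in the favourable direction; this works because the numerator $c_\gamma-\Pcal$ is upper semicontinuous and the denominator $\Fcal$ is lower semicontinuous, and the positivity of both limits is exactly what the nondegeneracy argument secures.
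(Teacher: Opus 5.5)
Your proposal is correct and follows essentially the same route as the paper. Both arguments get compactness from the bounded gradient and bounded second moments, rule out the degenerate case (vanishing gap or vanishing $\Fcal$) via Lemma \ref{lem:F-zero} and Lemma \ref{lem:bubble-P}, and then use upper semicontinuity of $c_\gamma-\Pcal$ and lower semicontinuity of $\Fcal$ to conclude; the only difference is cosmetic, in that the paper passes to a subsequence with $\Pcal(U_n)\to p$, $\Fcal(U_n)\to f$ and excludes $p=c_\gamma$ and $f=0$, whereas you work directly with $\limsup_n\delta_n$ and $\liminf_n\Fcal(U_n)$.
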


\begin{proof}
We may assume $U_n=(u_{1n},u_{2n})$ with $u_{1n},u_{2n}\geq0$.  Since $\Pcal(U_n)<c_\gamma$,
the same argument as in Step 1 of the proof of
Proposition \ref{prop:Lambda-finite} gives
\[
\sup_n\int_{\R^2}|x|^2|U_n|^2 \dd x<\infty.
\]
Moreover, passing to a subsequence, there is $U\in H^1(\R^2)\times H^1(\R^2)$ such that as  $n\to\infty$,
\[
U_n\rightharpoonup U\ \text{in }H^1(\R^2)\times H^1(\R^2),
\]
and
\[
U_n\to U\ \text{in }L^2(\R^2)\times L^2(\R^2)\ \text{and in }L^4(\R^2)\times L^4(\R^2).
\]
Thus, we have $U\in\Mcal$.
Passing to a further subsequence, we suppose that
\[
\Pcal(U_n)\to p\ \text{and}\
\Fcal(U_n)\to f\ \text{as}\ n\to\infty,
\]
since both sequences are bounded.
By the weak lower semicontinuity of norm and Fatou's lemma, we know that
\[
\Pcal(U)\le p\leq c_\gamma\ \text{and}\
\Fcal(U)\le f.
\]
Now, we claim that $c_\gamma-p>0$ and $f>0$.
If $c_\gamma-p=0$, then
\eqref{eq:max-seq} and $\Lambda_*>0$ imply $\Fcal(U_n)\to0$ and
$f=0$.  Hence, $\Fcal(U)=0$ and $\Pcal(U)\le c_\gamma$,
contradicting Lemmas \ref{lem:F-zero} and \ref{lem:bubble-P}.
The same argument excludes $f=0$.
It follows that $\Pcal(U)<c_\gamma$, $\Fcal(U)>0$ and
\[
\frac{c_\gamma-\Pcal(U)}{\Fcal(U)}
\ge
\frac{c_\gamma-p}{f}
=
\Lambda_*.
\]
By the definition of $\Lambda_*$, we conclude that $U$ is a maximizer of $\Lambda_*$.
\end{proof}

\begin{lemma}
\label{thm:concentration-bound}
Let $\{U_n\}\subset\Mcal$ satisfy \eqref{eq:max-seq}.
If $\|\nabla U_n\|_2^2\to\infty$ as $n\to\infty$,
then
\begin{equation}
\Lambda_*
\le
\Lambda_0.
\label{eq:Lambda-upper-conc}
\end{equation}
\end{lemma}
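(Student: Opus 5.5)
The plan is to redo the blow-up analysis of Steps 1--4 in the proof of Proposition \ref{prop:Lambda-finite}, but keeping the first-order term of the potential energy in the transverse direction $f$ instead of discarding it. First, $0<\delta_n:=c_\gamma-\Pcal(U_n)\le c_\gamma$ and $\delta_n/\Fcal(U_n)\to\Lambda_*>0$ give $\Fcal(U_n)\le C$. We may assume $u_{in}\ge0$. Set $\eps_n:=\|\nabla U_n\|_2^{-1}\to0$. Then $\Fcal(\eps_nU_n(\eps_n\cdot))=\eps_n^2\Fcal(U_n)\to0$, so Proposition \ref{prop:stability} and Lemma \ref{lem:centering} apply. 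They give $x_n$ and
\[
W_n(x)=\eps_nU_n(x_n+\eps_nx)=\Phi+e\eta_n+f\zeta_n ,
\]
with the orthogonality relations \eqref{eq:mass-exact}--\eqref{eq:trans-orth} and $\rho_n^2=\|\eta_n\|_{H^1}^2+\|\zeta_n\|_{H^1}^2\le C\eps_n^2\Fcal(U_n)$. By Lemma \ref{lem:local-expansion} and the lower bound $\Qcal_+(\eta_n)\ge -C\rho_n^3$ (from the proof of Proposition \ref{prop:stability}), we get
\[
\Qcal_\perp(\zeta_n)\le \eps_n^2\Fcal(U_n)+C\rho_n^3=\eps_n^2\Fcal(U_n)(1+o(1)).
\]

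Next comes an exact algebraic rewriting of $\Pcal(U_n)$. Write $y=x_n+\eps_nx$, $g=q+\eta_n$ and $\zeta=\zeta_n$. Using \eqref{eq:u1eta}, we have $w_{1n}^2=\gamma g^2+2\sqrt{c_\gamma}g\zeta+(1-\gamma)\zeta^2$ and $w_{2n}^2=(1-\gamma)g^2-2\sqrt{c_\gamma}g\zeta+\gamma\zeta^2$. Using further $\gamma|y|^2+(1-\gamma)|y-e_1|^2=c_\gamma+|y-z_\gamma|^2$, $(1-\gamma)|y|^2+\gamma|y-e_1|^2=c_\gamma+|y-\gamma e_1|^2$, $|y|^2-|y-e_1|^2=2(y-z_\gamma)_1+(1-2\gamma)$, and the mass constraint $\int(g^2+\zeta^2)=1$, I expect
\[
\Pcal(U_n)=c_\gamma+\int|y-z_\gamma|^2g^2+4\sqrt{c_\gamma}\int(y-z_\gamma)_1g\zeta+2\sqrt{c_\gamma}(1-2\gamma)\int g\zeta+\int|y-\gamma e_1|^2\zeta^2 .
\]
Completing the square pointwise, $\tfrac12|y-z_\gamma|^2g^2+4\sqrt{c_\gamma}(y-z_\gamma)_1g\zeta\ge-8c_\gamma\zeta^2$. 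Dropping the last nonnegative term then yields
\[
\delta_n\le -\tfrac12\int|y-z_\gamma|^2g^2-2\sqrt{c_\gamma}(1-2\gamma)\langle q,\zeta\rangle+C\rho_n^2 .
\]
This step needs no tail localization, since the discarded terms are nonnegative.

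For the first term, the tail argument of Step 4 (Lemma \ref{lem:q-tail} together with $\|g-q\|_{H^1}\le\rho_n$) should give $\int|y-z_\gamma|^2g^2\ge\vartheta\eps_n^2(1+o(1))-o(\eps_n^2)$. The factor $\tfrac12$ lost above is a problem here, so I will instead split the square with a weight $1-\tau$ versus $\tau$, accepting a cost $C_\tau\rho_n^2$, and let $\tau\to0$ at the end. By \eqref{eq:J-variational}, $2\sqrt{c_\gamma}|1-2\gamma||\langle q,\zeta\rangle|\le 2\sqrt{c_\gamma}|1-2\gamma|\sqrt{J_\kappa\Qcal_\perp(\zeta)}$. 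Set $b=\sqrt{\vartheta}\eps_n$ and use $2ab-b^2\le a^2$ with $a=\sqrt{c_\gamma}|1-2\gamma|\sqrt{J_\kappa\Qcal_\perp(\zeta)}/b$. This gives
\[
\delta_n\le\frac{c_\gamma(1-2\gamma)^2J_\kappa}{\vartheta\eps_n^2}\Qcal_\perp(\zeta_n)+C\rho_n^2+o(\eps_n^2)\le\Lambda_0\Fcal(U_n)(1+o(1))+C_\tau\eps_n^2\Fcal(U_n)+o(\eps_n^2).
\]
If instead the $\vartheta\eps_n^2$ term absorbs everything, then $\delta_n=o(\eps_n^2)$. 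Dividing by $\Fcal(U_n)$ and letting $n\to\infty$, then $\tau\to0$, gives \eqref{eq:Lambda-upper-conc}.

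The main obstacle is the residual $o(\eps_n^2)$, which must be shown to be $o(\Fcal(U_n))$. It is negligible only if $\Fcal(U_n)\gtrsim\eps_n^2$, i.e.\ if $\delta_n$ is not much smaller than $\eps_n^2$. I would first try to exploit the surplus $-(1-\tau)\vartheta\eps_n^2$ more carefully. Keeping it in the bound gives $\delta_n+\tfrac12\vartheta\eps_n^2\le\Lambda_0\Fcal(U_n)(1+o(1))$ plus errors that are $o(\eps_n^2)$, and those errors are absorbed by $\tfrac12\vartheta\eps_n^2$ itself. So the tail and $x_n$-dependent errors only need to be $o(\eps_n^2)$, which Step 4 provides. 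The delicate part is checking that every cross term is controlled either by $\rho_n^2\lesssim\eps_n^2\Fcal(U_n)$ or by $o(\eps_n^2)$, uniformly in $n$.
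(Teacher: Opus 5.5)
Your argument is correct in substance, but it is organized differently from the paper's. The paper first combines the crude bound \eqref{eq:endpoint-basic} with $\delta_n/F_n\to\Lambda_*$ to fix the scaling $\delta_n\sim F_n\sim\eps_n^2$, $|x_n-z_\gamma|=O(\eps_n)$ and $\rho_n=O(\eps_n^2)$. It then renormalizes the perturbation as $h_n=(\widehat W_n-\Phi)/\eps_n^2$ and extracts weak limits $(\eta,\zeta,\xi)$. In the linear potential term it freezes the trap weights at their values at $z_\gamma$. The resulting limiting quotient is bounded using $J_\kappa$, $\Qcal_+(\eta)\ge0$ on $T$, and the maximization of $\Gamma(t)$.

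You instead stay at finite $n$. Your exact identity in the $(e,f)$ frame uses the mass constraint to extract $c_\gamma$ exactly, so the leftover weight $|y-z_\gamma|^2$ is small on the bubble. Consequently neither weight-freezing nor any a priori localization of $x_n$ is needed. The pointwise square completion, whose cost is $C_\tau\rho_n^2\le C_\tau\eps_n^2F_n=o(F_n)$, together with $\Qcal_+(\eta_n)\ge-C\rho_n^3$, replaces the weak-limit step. The paper's route exhibits the limiting profile explicitly; yours is more elementary, needs no subsequences, and yields a direct quantitative inequality.

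Your lower bound for $\int|y-z_\gamma|^2g^2$ works precisely because this weight is small. On $B_{R_n}(0)$ one has $|y-z_\gamma|^2\le 2|x_n-z_\gamma|^2+2\eps_n^2R_n^2$, so the Step 4 error is $C|x_n-z_\gamma|^2\rho_n+o(\eps_n^2)$, and the first part is absorbed by $|x_n-z_\gamma|^2$. With the original $O(1)$ weights $A_{i,n}$, the same estimate only gives an error $C\rho_n$, which is not $o(\eps_n^2)$.

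One step in your last paragraph needs fixing. If you keep $\tfrac12\vartheta\eps_n^2$ on the left, only the other half is available for $2ab-b^2\le a^2$, and the constant becomes $2\Lambda_0$, not $\Lambda_0$. Keep instead $\sigma\vartheta\eps_n^2$ with $\sigma>0$ small. Once $o(\eps_n^2)\le\sigma\vartheta\eps_n^2$, this gives $\delta_n\le\frac{\Lambda_0}{1-\tau-\sigma}F_n(1+o(1))+C_\tau\eps_n^2F_n$. Letting $n\to\infty$ and then $\sigma,\tau\to0$ yields \eqref{eq:Lambda-upper-conc}.

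Alternatively, the obstacle you identify disappears outright. Since $\delta_n>0$, \eqref{eq:key-P-lower} already gives $\tfrac{\vartheta}{2}\eps_n^2\le C\eps_n\sqrt{F_n}$, i.e.\ $F_n\gtrsim\eps_n^2$, so $o(\eps_n^2)=o(F_n)$. This is exactly the a priori scaling step the paper performs before passing to the limit.
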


\begin{proof}
Let
\[
\delta_n=c_\gamma-\Pcal(U_n)>0\ \text{and}\
F_n:=\Fcal(U_n).
\]
By \eqref{eq:max-seq},
\begin{equation}
\frac{\delta_n}{F_n}\to\Lambda_*\in(0,\infty) \ \text{as}\ n\to\infty.
\label{eq:ratio-endpoint}
\end{equation}
Since $0<\delta_n\le c_\gamma$, this implies that $\sup_n F_n<\infty.$
Now, we use the same notations as in Steps 2-4 of the proof of Proposition \ref{prop:Lambda-finite}.
Set $K_n=\|\nabla U_n\|_2^2$ and $\eps_n=K_n^{-1/2}\to0$, then
$\Fcal(\eps_n U_n(\eps_n\cdot))=\eps_n^2F_n\to0\ \text{as}\ n\to\infty$.
As in Step 2, there are $x_n$ and $W_n(x)=\eps_nU_n(x_n+\eps_nx)$
such that for sufficiently large $n$,
\begin{equation}
\|W_n-\Phi\|_{H^1\times H^1}
\le
C\eps_n\sqrt{F_n}.
\label{eq:endpoint-stability}
\end{equation}
Moreover, the estimate
\eqref{eq:key-P-lower} holds, that is,
\begin{equation}
\delta_n+|x_n-z_\gamma|^2+\frac{\vartheta}{2}\eps_n^2
\le
C\eps_n\sqrt{F_n}
\label{eq:endpoint-basic}
\end{equation}
for sufficiently large $n$.
By \eqref{eq:ratio-endpoint}, for sufficiently large $n$,
\[
\frac{\Lambda_*}{2}
\le
\frac{\delta_n}{F_n}
\le
2\Lambda_*.
\]
Putting $X_n=\sqrt{\delta_n}/\eps_n$ in
\eqref{eq:endpoint-basic}, for sufficiently large $n$, we have
\[
X_n^2+\frac{\vartheta}{2}
\le
CX_n
\]
with $C>0$ independent of $n$.  Hence, there are constants $C_1,C_2>0$ such that for sufficiently large $n$,
\begin{equation*}
0<C_1\le X_n\le C_2.
\end{equation*}
Consequently,
\begin{equation}\label{eq:X-bounds-endpoint}
C_1^2\eps_n^2\leq\delta_n\leq C_2^2\eps_n^2,\
\frac{C_1^2}{2\Lambda_*}\eps_n^2\leq F_n\leq\frac{2C_2^2}{\Lambda_*}\eps_n^2
\end{equation}
and
\begin{equation}\label{eqn:x-n-z}
|x_n-z_\gamma|=O(\eps_n).
\end{equation}
By
\eqref{eq:endpoint-stability}, we infer that
\begin{equation*}
\|W_n-\Phi\|_{H^1\times H^1}=O(\eps_n^2).
\end{equation*}
Thus, for sufficiently large $n$, Lemma \ref{lem:centering} is available.
Similar to the argument before Lemma
\ref{lem:local-expansion},
we may write $\widehat W_n:=W_n(\cdot+z_n)$ with $|z_n|=O(\eps_n^2)$ and decompose
\begin{equation}
\widehat W_n=\Phi+\eps_n^2h_n,
\
h_n=e\eta_n+f\zeta_n,
\
\|h_n\|_{H^1\times H^1}\le C.
\label{eq:h-n}
\end{equation}
Moreover,
\begin{align}
\langle q,\eta_n\rangle
=
-\frac{\eps_n^2}{2}
\left(\|\eta_n\|_2^2+\|\zeta_n\|_2^2\right),
\label{eq:eta-mass-small}
\end{align}
\begin{align}
\langle\nabla q,\nabla\eta_n\rangle
=
-\frac{\eps_n^2}{2}
\left(\|\nabla\eta_n\|_2^2+\|\nabla\zeta_n\|_2^2\right)
\label{eq:eta-grad-small}
\end{align}
and
\begin{equation}
\langle\eta_n,\pa_jq\rangle=0,\  j=1,2.
\label{eq:eta-trans-small}
\end{equation}
Passing to a subsequence, there exist $\eta,\zeta\in H^1(\R^2)$ and $\xi\in\R^2$ such that as $n\to\infty$,
\begin{equation}\label{eqn:zeta-weak}
\eta_n\rightharpoonup\eta,\
\zeta_n\rightharpoonup\zeta
\ \text{in }H^1(\R^2),
\end{equation}
and
\begin{equation}\label{eqn:xi-n-definition}
\xi_n:=\frac{x_n-z_\gamma}{\eps_n}\to\xi\in\R^2.
\end{equation}
Then \eqref{eq:eta-mass-small}-\eqref{eq:eta-trans-small} imply that
$\eta\in T$, where $T$ is defined in \eqref{eq:T}.
Moreover, by \eqref{eq:Qplus-nonnegative}, $\Qcal_+(\eta)\ge0$.

In the following, we establish the fine estimates for $F_n$ and $\delta_n$, respectively.
 Applying Lemma
\ref{lem:local-expansion} to $\widehat W_n=\Phi+\eps_n^2h_n$, we can deduce that
\begin{equation*}
\frac{F_n}{\eps_n^2}=\frac{\Fcal(\widehat W_n)}{\eps_n^4}
=
\Qcal_+(\eta_n)+\Qcal_\perp(\zeta_n)+o(1).
\end{equation*}
Using weak lower semicontinuity and \eqref{eqn:zeta-weak}, we therefore obtain
\begin{equation}
\liminf_{n\to\infty}\frac{F_n}{\eps_n^2}
\ge
\Qcal_+(\eta)+\Qcal_\perp(\zeta).
\label{eq:den-liminf}
\end{equation}
On the other hand,  recall from \eqref{eq:h-n} that
\[
W_n(x)=\widehat W_n(x-z_n)
=\Phi(x-z_n)+\varepsilon_n^2 h_n(x-z_n).
\]
For $W_n(x)=(w_{1n},w_{2n})$, $h_n=(h_{1n},h_{2n})$, $\phi_1=\sqrt{\gamma} q$ and $\phi_2=\sqrt{1-\gamma} q$, we know that
\[
w_{in}(x)=\phi_i(x-z_n)+\varepsilon_n^2 h_{in}(x-z_n),
\qquad i=1,2.
\]
Choose $R_n=L|\log\eps_n|$ as in the proof of Proposition
\ref{prop:Lambda-finite}.
Let
\[
\widehat A_{1,n}(x)=|x_n+\eps_nx+\eps_n z_n|^2,\
\widehat A_{2,n}(x)=|x_n+\eps_nx-e_1+\eps_n z_n|^2.
\]
By Lemma \ref{lem:q-tail}, we infer that
\begin{align}
\Pcal(U_n)
\ge{}&
\sum_{i=1}^2\int_{B_{R_n}(0)}\widehat A_{i,n}\phi_i^2 \dd x
+
2\eps_n^2
\sum_{i=1}^2\int_{B_{R_n}(0)}\widehat A_{i,n}\phi_i h_{in} \dd x
+o(\eps_n^2) \ \text{as}\ n\to\infty.
\label{eq:P-second-order-core}
\end{align}
Recall that $|z_n|=O(\varepsilon_n^2)$, then by the definition of $\xi_n$ (see \eqref{eqn:xi-n-definition}),
$$
\sum_{i=1}^2\int_{B_{R_n}(0)}\widehat A_{i,n}\phi_i^2 \dd x=c_\gamma+\eps_n^2\left(\vartheta+|\xi_n|^2\right)
+o(\eps_n^2) \ \text{as}\ n\to\infty.
$$
On the other hand, by \eqref{eqn:x-n-z} and $|z_n|=O(\eps_n^2)$, we have
\[
\widehat A_{1,n}(x)=(1-\gamma)^2+o(1)\ \text{and}\
\widehat A_{2,n}(x)=\gamma^2+o(1)
\]
uniformly for $x\in B_{R_n}(0)$.
Hence
\begin{align}
\sum_{i=1}^2\int_{B_{R_n}(0)}\widehat A_{i,n}\phi_i h_{in} \dd x
={}&
c_\gamma\int_{\R^2}q\eta_n \dd x
+
\sqrt{c_\gamma}(1-2\gamma)\int_{\R^2}q\zeta_n \dd x
+o(1).
\label{eq:linear-potential}
\end{align}
Combining \eqref{eq:eta-mass-small}, \eqref{eqn:zeta-weak}
 and \eqref{eq:P-second-order-core}-\eqref{eq:linear-potential},
we conclude that
$$
\Pcal(U_n)\geq c_\gamma+\eps_n^2\left(\vartheta+|\xi_n|^2\right)
+2\sqrt{c_\gamma}(1-2\gamma)\varepsilon_n^2\int_{\R^2}q\zeta \dd x+o(\varepsilon_n^2)\ \text{as}\ n\to\infty,
$$
which implies that
\begin{equation}
\limsup_{n\to\infty}\frac{\delta_n}{\eps_n^2}
\le
-\vartheta-|\xi|^2- 2\sqrt{c_\gamma} (1-2\gamma)\int_{\R^2}q\zeta \dd x.
\label{eq:num-limsup}
\end{equation}
Together with \eqref{eq:X-bounds-endpoint}, we have
\begin{equation*}
-\vartheta-|\xi|^2- 2\sqrt{c_\gamma} (1-2\gamma)\int_{\R^2}q\zeta \dd x>0,
\end{equation*}
which signifies that $\zeta\ne0$. Recall that $\Qcal_+(\eta)\ge0$, then we can infer from  Lemma \ref{lem:Qperp-coercive} that
\[
\Qcal_+(\eta)+\Qcal_\perp(\zeta)
\ge
\Qcal_\perp(\zeta)>0.
\]

Finally, passing to a further subsequence if necessary, we may assume that
\[
\frac{\delta_n}{\eps_n^2}\to A_0>0 \ \text{as}\ n\to\infty.
\]
Then \eqref{eq:ratio-endpoint} implies
\[
\frac{F_n}{\eps_n^2}\to \frac{A_0}{\Lambda_*}>0 \ \text{as}\ n\to\infty.
\]
By \eqref{eq:den-liminf} and \eqref{eq:num-limsup}, we have
\[
\frac{A_0}{\Lambda_*}\ge \Qcal_+(\eta)+\Qcal_\perp(\zeta)\ge
\Qcal_\perp(\zeta)>0
\]
and
\[
A_0\le -\vartheta-|\xi|^2- 2\sqrt{c_\gamma} (1-2\gamma)\int_{\R^2}q\zeta \dd x.
\]
Therefore,
\begin{align*}
\Lambda_*=A_0\times \frac{\Lambda_*}{A_0}
&\le
\frac{
-\vartheta-|\xi|^2- 2\sqrt{c_\gamma} (1-2\gamma)\int_{\R^2}q\zeta \dd x
}{
\Qcal_+(\eta)+\Qcal_\perp(\zeta)}
\nonumber\\
&\le
\frac{-\vartheta- 2\sqrt{c_\gamma} (1-2\gamma)\int_{\R^2}q\zeta \dd x}
{\Qcal_\perp(\zeta)}.
\end{align*}
Recall from \eqref{eq:J-variational} that
\[
\left|\int_{\R^2}q\zeta \dd x\right|
\le
\sqrt{J_\kappa} \Qcal_\perp(\zeta)^{1/2},
\]
then
\[
\frac{-\vartheta- 2\sqrt{c_\gamma} (1-2\gamma)\int_{\R^2}q\zeta \dd x}{\Qcal_\perp(\zeta)}
\le
-\frac{\vartheta}{\Qcal_\perp(\zeta)}
+
\frac{| 2\sqrt{c_\gamma} (1-2\gamma)|\sqrt{J_\kappa}}{(\Qcal_\perp(\zeta))^{1/2}}.
\]
Consider the function $$
\Gamma(t):=-\frac{\vartheta}{t^2}
+
\frac{| 2\sqrt{c_\gamma} (1-2\gamma)|\sqrt{J_\kappa}}{t}$$
for $t>0$.
If $ 2\sqrt{c_\gamma} (1-2\gamma)\ne0$,
the function  $\Gamma(t)$ reaches its maximum at
\[
t_0=\frac{\vartheta}{| \sqrt{c_\gamma} (1-2\gamma)|\sqrt{J_\kappa}}
\]
and
\[
\Gamma(t_0)=\frac{ c_\gamma (1-2\gamma)^2J_\kappa}{\vartheta}
=
\Lambda_0.
\]
Thus, $\Lambda_*\leq\Lambda_0$.
If $ 2\sqrt{c_\gamma} (1-2\gamma)=0$, we know that $\Lambda_0=0$
and $\Lambda_*\leq \frac{-\vartheta}
{\Qcal_\perp(\zeta)}\leq0$.  The proof is completed.
\end{proof}

Lemmas~\ref{lem:bounded-max-seq} and
\ref{thm:concentration-bound} show that $\Lambda_*$ is attained
if $\Lambda_*>\Lambda_0$. Therefore, it remains to establish
this strict inequality.
\begin{lemma}
\label{prop:semi-trivial-endpoint}
Define
\begin{equation*}
\Lambda_{1}
:=
\frac{a^*\max\{s_1^2,s_2^2\}}
{4(s_1+s_2)^4\vartheta},
\end{equation*}
then $\Lambda_*\geq\Lambda_{1}>\Lambda_0$.
\end{lemma}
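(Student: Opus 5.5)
The plan is to prove the two inequalities separately. The first, $\Lambda_*\ge\Lambda_1$, will follow from explicit semi-trivial test functions. The second, $\Lambda_1>\Lambda_0$, needs an upper bound on $J_\kappa$, and I expect that to be the real work.

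\textbf{Lower bound $\Lambda_*\ge\Lambda_1$.} I would use the test functions of Lemma~\ref{prop:Lambda-positive}, now with a tunable concentration scale. Take $U_t:=(\sqrt t\,q(\sqrt t\,x),0)\in\Mcal$ with $t>0$. Then
\[
\Pcal(U_t)=\vartheta/t,\qquad \Fcal(U_t)=t(1-a_1/a^*)=t s_1^2/a^*.
\]
For $t>\vartheta/c_\gamma$ the quotient in \eqref{eq:Lambda-def} is
\[
\frac{a^*}{s_1^2}\Bigl(\frac{c_\gamma}{t}-\frac{\vartheta}{t^2}\Bigr),
\]
which is maximized at $t=2\vartheta/c_\gamma$ with value $a^*c_\gamma^2/(4s_1^2\vartheta)$. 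Since $c_\gamma=s_1s_2/(s_1+s_2)^2$, this value equals $a^*s_2^2/(4(s_1+s_2)^4\vartheta)$.

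Symmetrically, take $(0,\sqrt t\,q(\sqrt t(x-e_1)))$, a bubble in the second component centred at $e_1$. It gives $a^*s_1^2/(4(s_1+s_2)^4\vartheta)$. Taking the larger of the two yields $\Lambda_*\ge\Lambda_1$.

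\textbf{Strict inequality $\Lambda_1>\Lambda_0$.} Using $c_\gamma(1-2\gamma)^2=s_1s_2(s_1-s_2)^2/(s_1+s_2)^4$, the claim is equivalent to
\[
4s_1s_2(s_1-s_2)^2J_\kappa<a^*\max\{s_1^2,s_2^2\}.
\]
If $s_1=s_2$ this is trivial, since then $\Lambda_0=0<\Lambda_1$. Otherwise I need an upper bound on $J_\kappa$ from \eqref{eq:J-variational}, i.e.\ a lower bound for $\inf\{\Qcal_\perp(\zeta):\int q\zeta=1\}$.

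The test function $\zeta=q$ shows $J_\kappa\ge a^*/(4s_1s_2)$, because $\Qcal_\perp(q)=2-2\kappa=4s_1s_2/a^*$. Hence the target inequality is tight to leading order when one of the $s_i$ is small. The crude bound $\Qcal_\perp\ge(1-\kappa)\|\zeta\|_2^2$ gives $J_\kappa\le a^*/(2s_1s_2)$. That loses a factor $2$ and does not suffice, so the estimate must be sharp along the direction $q$.

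\textbf{Sharp bound on $J_\kappa$ (main obstacle).} I would write
\[
\Qcal_\perp(\zeta)=\langle L_-\zeta,\zeta\rangle+2s_1s_2\int q^2\zeta^2,
\]
and decompose $\zeta=\alpha q+w$ with $w$ chosen orthogonal to $q$ in a suitable sense. Since $L_-q=0$, the first term only sees $w$, and on that complement it controls $\|w\|_{H^1}^2$ minus a multiple of $\int Q^2w^2$ (the spectral gap of $L_-$ above its kernel $\operatorname{span}\{Q\}$). The second term couples $\alpha$ and $w$ through $\int q^3w$, and $\int q^2\zeta^2$ contains $\alpha^2\int q^4=2\alpha^2/a^*$. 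Minimizing the resulting $2\times2$ quadratic problem in $(\alpha,\|w\|)$ should give
\[
J_\kappa\le\frac{a^*}{4s_1s_2}\bigl(1+O(s_1s_2/a^*)\bigr),
\]
with an explicit correction. It then remains to compare this with the slack
\[
\frac{\max\{s_1,s_2\}^2}{(s_1-s_2)^2}>1,
\]
using $s_1,s_2<\sqrt{a^*}$.

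This quantitative spectral estimate on $L_-$ restricted to $q^\perp$ is where I expect the difficulty. If the gap constant is not explicit enough, the fallback is to enlarge the test family for $\Lambda_*$ instead, for example with small perturbations of the semi-trivial bubble along the second component, so as to push $\Lambda_*$ strictly above $\Lambda_0$.
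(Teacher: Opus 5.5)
Your lower bound $\Lambda_*\ge\Lambda_1$ is correct and matches the paper's argument: semi-trivial bubbles in either component, optimised at $t=2\vartheta/c_\gamma$. Two further steps are also correct. You reduce $\Lambda_1>\Lambda_0$ to $4s_1s_2(s_1-s_2)^2J_\kappa<a^*\max\{s_1^2,s_2^2\}$. You also observe, via $\zeta=q$, that any usable upper bound on $J_\kappa$ must be sharp to leading order.

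The gap is that you never prove that upper bound. Your sketched route ($\zeta=\alpha q+w$, the spectral gap of $L_-$ on $Q^\perp$, a $2\times2$ minimisation) can at best give $J_\kappa\le\frac{a^*}{4s_1s_2}\bigl(1+c\,s_1s_2/a^*+\cdots\bigr)$. Here $c$ depends on the spectral gap of $L_-$ and the coupling through $\int q^3w$, and that gap is not explicitly available. The target inequality must hold for all $(s_1,s_2)\in(0,\sqrt{a^*})^2$. As $\min\{s_1,s_2\}\to0$ its two sides agree to leading order, so the sign is decided by that first-order constant: one needs $c\max\{s_1^2,s_2^2\}<2a^*$. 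An unspecified $c$ cannot settle this. Your fallback of enlarging the test family would at most give $\Lambda_*>\Lambda_0$, not the stated inequality $\Lambda_1>\Lambda_0$ between explicit constants.

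The missing idea is that the Gagliardo--Nirenberg inequality already gives the explicit coercivity, with no spectral input. By H\"older and \eqref{eqn:GN}, $\int Q^2\zeta^2\le\|Q\|_4^2\|\zeta\|_4^2\le2\|\zeta\|_2\|\nabla\zeta\|_2$. Hence for $\kappa\ge0$
\[
\Qcal_\perp(\zeta)\ge\|\nabla\zeta\|_2^2+\|\zeta\|_2^2-2\kappa\|\zeta\|_2\|\nabla\zeta\|_2\ge(1-\kappa^2)\|\zeta\|_2^2 ,
\]
while for $\kappa\le0$ one has trivially $\Qcal_\perp(\zeta)\ge\|\zeta\|_2^2$. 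Combine this with $(\int q\zeta)^2\le\|\zeta\|_2^2$ in \eqref{eq:J-variational}. This gives, for every $\kappa\in(-1,1)$,
\[
J_\kappa\le\frac{1}{1-\kappa^2}=\frac{(a^*)^2}{4s_1s_2(a^*-s_1s_2)} .
\]
This is exactly the form you anticipated, with explicit correction factor $a^*/(a^*-s_1s_2)$. It improves your crude bound $1/(1-\kappa)$ by the factor $1+\kappa\to2$ that you identified as missing. Inserting it, the target reduces to $\max\{s_1,s_2\}^2<a^*\bigl(2-\min\{s_1,s_2\}/\max\{s_1,s_2\}\bigr)$. This holds because $\max\{s_1^2,s_2^2\}<a^*$. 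The paper treats $\kappa\le0$ separately using $J_\kappa\le1$, but the same algebra covers that case.
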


\begin{proof}
First, we give the lower bound for $\Lambda_*$ from semi-trivial states.
Recall that
\[
\Lambda_*
=
\sup_{\substack{U\in\mathcal M\\
\mathcal P(U)<c_\gamma}}
\frac{c_\gamma-\mathcal P(U)}
{\mathcal F(U)}.
\]
For $\varepsilon>0$, set
\[
\phi_{\varepsilon}(x)
:=
\frac{1}{\varepsilon}
q\left(\frac{x}{\varepsilon}\right).
\]
Since $\|q\|_2=1$, we have $\|\phi_{\varepsilon}\|_2=1.$
Moreover,
\[
\mathcal F(\phi_{\varepsilon},0)
=
\frac{s_1^2}{a^*\varepsilon^2}\ \text{and}\ \mathcal P(\phi_{\varepsilon},0)
=
\vartheta\varepsilon^2.
\]
Therefore,
\[
\Lambda_*
\geq\frac{c_\gamma-\mathcal P(\phi_{\varepsilon},0)}
{\mathcal F(\phi_{\varepsilon},0)}
=
\frac{a^*\varepsilon^2}{s_1^2}
\left(
c_\gamma-\vartheta\varepsilon^2
\right).
\]
Choosing $\varepsilon^2=\frac{c_\gamma}{2\vartheta}$, we get that
\[
\Lambda_*
\geq
\frac{a^*c_\gamma^2}
{4s_1^2\vartheta}.
\]
Note that
$c_\gamma=\gamma(1-\gamma)$ and $1-\gamma=\frac{s_1}{s_1+s_2}$,
then
\begin{equation}
\label{eq:semi-first-component}
\Lambda_*
\geq
\frac{a^*s_2^2}
{4(s_1+s_2)^4\vartheta}.
\end{equation}
Applying the same argument to $(0,\phi_{\varepsilon,e_1})$ with
 \[
\phi_{\varepsilon,e_1}(x):
=
\frac{1}{\varepsilon}
q\left(\frac{x-e_1}{\varepsilon}\right),
\]
 we obtain that
 \begin{equation}
\label{eq:semi-second-component}
\Lambda_*
\geq
\frac{a^*s_1^2}
{4(s_1+s_2)^4\vartheta}.
\end{equation}
Combining \eqref{eq:semi-first-component} and \eqref{eq:semi-second-component}, we conclude that
$$
\Lambda_*\geq\Lambda_{1}
=
\frac{a^*\max\{s_1^2,s_2^2\}}
{4(s_1+s_2)^4\vartheta}.
$$

Next, we prove that $\Lambda_1>\Lambda_0$ for any $a_1,a_2\in (0,a^*)$. Recall that
\begin{equation*}
\Lambda_0
=
\frac{c_\gamma(1-2\gamma)^2}
{\vartheta}
J_\kappa,
\end{equation*}
where
$\kappa=1-\frac{2s_1s_2}{a^*}\in(-1,1).$
Since
\[
c_\gamma
=
\frac{s_1s_2}{(s_1+s_2)^2}
\ \text{and}\
1-2\gamma
=
\frac{s_1-s_2}{s_1+s_2},
\]
then
\[
c_\gamma(1-2\gamma)^2
=
\frac{s_1s_2(s_1-s_2)^2}
{(s_1+s_2)^4}.
\]
Thus,
\[
\Lambda_0
=
\frac{s_1s_2(s_1-s_2)^2}
{(s_1+s_2)^4\vartheta}
J_\kappa
\]
and $\Lambda_{1}>\Lambda_0$
is equivalent to
\begin{equation}
\label{eq:semi-comparison-basic}
\frac{a^*}{4}\max\{s_1^2,s_2^2\}
>
s_1s_2(s_1-s_2)^2J_\kappa.
\end{equation}

Now, we distinguish the two possible signs of $\kappa$.

\medskip
\noindent
\textbf{Case 1: $\kappa\leq0$.} For $\kappa\leq0$,
recall from \eqref{eq:J-variational} that
\begin{equation*}
J_\kappa
=
\sup_{\zeta\in H^1(\R^2)\setminus\{0\}}
\frac{\left(\int_{\R^2}q\zeta \dd x\right)^2}
{\Qcal_\perp(\zeta)},
\end{equation*}
where
$$
\Qcal_\perp(\zeta)=\int_{\R^2}
\left(
|\nabla\zeta|^2+\zeta^2-\kappa Q^2\zeta^2
\right)\dd x\geq\int_{\R^2}
\left(|\nabla\zeta|^2+\zeta^2\right)\dd x.
$$
By the H\"older's inequality, we have
$J_\kappa\leq 1.$
Note that
$s_1s_2<a^*$, then
 \eqref{eq:semi-comparison-basic} holds if
\[
\frac14\max\{s_1^2,s_2^2\}
>
(s_1-s_2)^2.
\]
This is equivalent to
$$
\frac{\min\{s_1,s_2\}}{\max\{s_1,s_2\}}>1/2.
$$
On the other hand, $\kappa\leq0$ means $2s_1s_2\geq a^*>(\max\{s_1,s_2\})^2$,
then the above inequality holds.
Therefore, if $\kappa\leq0$, then \eqref{eq:semi-comparison-basic} holds and $\Lambda_{1}>\Lambda_0.$

\medskip
\noindent
\textbf{Case 2: $0<\kappa<1$.}
For $0<\kappa<1$,
recall that $
L_\perp\psi_\kappa=q
\ \text{in }H^{-1}(\R^2)
$
and
$$
J_\kappa
=
\langle q,\psi_\kappa\rangle
=
\Qcal_\perp(\psi_\kappa)
>0.
$$
Then
\begin{equation}\label{eq:Jkappa-energy}
J_\kappa
=
\|\nabla\psi_\kappa\|_2^2
+\|\psi_\kappa\|_2^2
-\kappa\int_{\mathbb R^2}Q^2\psi_\kappa^2.
\end{equation}
By H\"older's inequality and the
Gagliardo--Nirenberg inequality \eqref{eqn:GN},
we have
\begin{align*}
\int_{\mathbb R^2}Q^2\psi_\kappa^2
&\le
\|Q\|_4^2\|\psi_\kappa\|_4^2\\
&\le
(2a^*)^{1/2}
\left(\frac{2}{a^*}\right)^{1/2}
\|\psi_\kappa\|_2\|\nabla\psi_\kappa\|_2\\
&=
2\|\psi_\kappa\|_2\|\nabla\psi_\kappa\|_2,
\end{align*}
which, together with \eqref{eq:Jkappa-energy}, implies that
\begin{align*}
J_\kappa
&\ge
\|\nabla\psi_\kappa\|_2^2
+\|\psi_\kappa\|_2^2
-2\kappa
\|\psi_\kappa\|_2\|\nabla\psi_\kappa\|_2\\
&=
\bigl(
\|\nabla\psi_\kappa\|_2
-\kappa\|\psi_\kappa\|_2
\bigr)^2
+
(1-\kappa^2)\|\psi_\kappa\|_2^2\\
&\ge
(1-\kappa^2)\|\psi_\kappa\|_2^2.
\end{align*}
On the other hand, since $\|q\|_2=1$, the Cauchy--Schwarz inequality gives
\[
J_\kappa
=
\langle q,\psi_\kappa\rangle
\le
\|\psi_\kappa\|_2.
\]
Therefore,
\[
J_\kappa
\ge
(1-\kappa^2)\|\psi_\kappa\|_2^2
\ge
(1-\kappa^2)J_\kappa^2,
\]
which means that
\[
J_\kappa\le\frac{1}{1-\kappa^2}.
\]
Note that
\[
1-\kappa=\frac{2s_1s_2}{a^*} \ \text{and}\
1+\kappa=\frac{2(a^*-s_1s_2)}{a^*},
\]
then
\[
J_\kappa
\le
\frac{(a^*)^2}{4s_1s_2(a^*-s_1s_2)}.
\]
Now, we prove \eqref{eq:semi-comparison-basic} holds. This is equivalent to verifying that
\[
\frac{a^*}{4}\max\{s_1^2,s_2^2\}
>
\frac{(s_1-s_2)^2(a^*)^2}{4(a^*-s_1s_2)},
\]
that is,
\[
a^*-\max\{s_1,s_2\}\min\{s_1,s_2\}
>
a^*\left(1-\frac{\min\{s_1,s_2\}}{\max\{s_1,s_2\}}\right)^2.
\]
After expanding, this becomes
\begin{equation}\label{eqn:expanding-case-2}
(\max\{s_1,s_2\})^2
<
a^*\left(2-\frac{\min\{s_1,s_2\}}{\max\{s_1,s_2\}}\right).
\end{equation}
Note that
\[
2-\frac{\min\{s_1,s_2\}}{\max\{s_1,s_2\}}\ge 1,
\]
and
\[
(\max\{s_1,s_2\})^2=\max\{s_1^2,s_2^2\}<a^*.
\]
Hence, \eqref{eqn:expanding-case-2} holds and then \eqref{eq:semi-comparison-basic} holds for \(0<\kappa<1\).
Consequently, for any $a_1,a_2\in (0,a^*)$, we obtain  $\Lambda_*\geq\Lambda_{1}>\Lambda_0.$
\end{proof}

\noindent\textbf{Proof of Theorem \ref{thm:main-2}.}
This  directly follows from
Lemmas \ref{lem:bounded-max-seq}-\ref{prop:semi-trivial-endpoint}.
\qed

\ \\
\noindent\textbf{\Large Acknowledgements}

\noindent {This research is supported by National Natural Science Foundation of China (No.12371120).}
\vspace{0.8em}

\noindent\textbf{\Large Declaration}

\noindent {{\bf Conflict of interest} The authors do not have conflict of interest.}

\end{document}